\newcommand{\blue}[1]{{\color{blue} #1}}
\renewcommand{\theta}{\uptheta}
\renewcommand{\alpha}{\upalpha}
\renewcommand{\beta}{\upbeta}
\renewcommand{\gamma}{\upgamma}
\renewcommand{\delta}{\updelta}
\renewcommand{\zeta}{\upzeta}
\renewcommand{\pi}{\uppi\hspace{0.05em}}
\renewcommand{\rho}{\uprho}
\renewcommand{\xi}{\upxi}
\renewcommand{\chi}{\upchi}
\renewcommand{\sigma}{\upsigma}
\renewcommand{\Lambda}{\Uplambda}
\renewcommand{\Gamma}{\Upgamma}
\renewcommand{\phi}{\upphi}
\renewcommand{\nu}{\upnu}
\renewcommand{\tau}{\uptau}
\renewcommand{\mu}{\upmu}
\renewcommand{\eta}{\upeta}
\newtheorem{theorem}{Theorem}[section]
\newtheorem{definition}[theorem]{Definition}
\newtheorem{proposition}[theorem]{Proposition}
\newtheorem{lemma}[theorem]{Lemma}
\newtheorem{corollary}[theorem]{Corollary}
\newtheorem{question}[theorem]{Question}
\newenvironment{proofof}[1]{\par
  \pushQED{\qed}%
  \normalfont \topsep6\p@\@plus6\p@\relax
  \trivlist
  \item[\hskip\labelsep
        \scshape
    Proof of #1\@addpunct{.}]\ignorespaces
}{%
  \popQED\endtrivlist\@endpefalse
}
\DeclareMathOperator{\supp}{supp}
\DeclareMathOperator{\fram}{fr}
\DeclareMathOperator{\fr}{fr}
\DeclareMathOperator{\gr}{gr}
\DeclareMathOperator{\udim}{\underline{\dim}}
\renewcommand{\AA}{\mathbb{A}}
\DeclareMathOperator{\cnilp}{c-nilp}
\DeclareMathOperator{\BGl}{BGL}
\newcommand{\CC}{\mathbb{C}}
\newcommand{\OO}{\mathscr{O}}
\DeclareMathOperator{\ncHilb}{ncHilb}
\DeclareMathOperator{\EXP}{EXP}
\DeclareMathOperator{\LOG}{LOG}
\DeclareMathOperator{\Fr}{Fr}
\DeclareMathOperator{\lngth}{lt}
\DeclareMathOperator{\BPSmo}{BPS}
\newcommand{\BPSA}[1]{\BPSmo^*_{#1}}
\DeclareMathOperator{\Sta}{Sta}
\DeclareMathOperator{\Grot}{K_0}
\DeclareMathOperator{\vir}{vir}
\newcommand{\Comm}{\mathbf{C}}
\newcommand{\StComm}{\mathfrak{C}}
\newcommand{\muhat}{\hat{\mu}}
\newcommand{\TTr}{\mathcal{T}r}
\newcommand{\TTTr}{\mathfrak{T}r}
\newcommand{\BPS}{\mathcal{BPS}}
\newcommand{\GG}{\mathbb{G}}
\DeclareMathOperator{\rat}{rat}
\DeclareMathOperator{\PV}{\mathbf{P}}
\DeclareMathOperator{\PS}{\mathfrak{P}}
\newcommand{\LL}{\mathbb{L}}
\newcommand{\LLL}{\mathfrak{L}}
\newcommand{\NN}{\mathbb{N}}
\newcommand{\QQ}{\mathbb{Q}}
\newcommand{\WW}{\mathcal{T}r(W)}
\newcommand{\WWW}{\mathfrak{Tr}(W)}
\newcommand{\XX}{\mathbb{X}}
\newcommand{\X}{\mathcal{X}}
\newcommand{\Yy}{\mathcal{Y}}
\newcommand{\ZZ}{\mathbb{Z}}
\newcommand{\Mst}{\mathfrak{M}}
\newcommand{\Msp}{\mathcal{M}}
\newcommand{\ICS}{\overline{\mathcal{IC}}}
\newcommand{\phim}[1]{\phi^{\mon}_{#1}}
\newcommand{\MM}{\mathscr{M}}
\newcommand{\MMM}[1]{\MM^{\mon}_{#1}}
\DeclareMathOperator{\moLQ}{Q}
\newcommand{\LQ}[1]{\moLQ^{(#1)}}\DeclareMathOperator{\Mat}{Mat}
\DeclareMathOperator{\crit}{crit}
\DeclareMathOperator{\Hom}{Hom}
\DeclareMathOperator{\KK}{K}
\newcommand{\AAA}[1]{\AA_{\CC}^{#1}}
\newcommand{\Khat}[1]{\hat{\KK}_0^{#1}}
\DeclareMathOperator{\mon}{mon}
\DeclareMathOperator{\stab}{st}
\DeclareMathOperator{\MMHM}{MMHM}
\DeclareMathOperator{\Gr}{Gr}
\DeclareMathOperator{\Rep}{Rep}
\DeclareMathOperator{\Id}{Id}
\DeclareMathOperator{\Perv}{Perv}
\DeclareMathOperator{\MHM}{MHM}
\DeclareMathOperator{\MHS}{MHS}
\DeclareMathOperator{\MMHS}{MMHS}
\DeclareMathOperator{\Sym}{Sym}
\DeclareMathOperator{\SSym}{\mathfrak{S}}
\DeclareMathOperator{\red}{red}
\DeclareMathOperator{\Var}{Var}
\DeclareMathOperator{\Spec}{Spec}
\DeclareMathOperator{\SpecSym}{SpecSym}
\DeclareMathOperator{\Gl}{GL}
\DeclareMathOperator{\id}{id}
\DeclareMathOperator{\Tr}{Tr}
\DeclareMathOperator{\pt}{pt}
\DeclareMathOperator{\forg}{forg}
\DeclareMathOperator{\Ob}{Ob}
\DeclareMathOperator{\Ho}{\mathcal{H}}
\DeclareMathOperator{\HO}{H}
\DeclareMathOperator{\HP}{HP}
\newcommand{\Dblf}{\mathcal{D}^{\leq, \mathrm{lf}}}
\newcommand{\Db}{\mathcal{D}^{b}}
\newcommand{\Dsg}{\mathcal{D}_{\sg}}
\DeclareMathOperator{\sg}{sg}
\newcommand{\Dub}{\mathcal{D}}
\let\bb=\mathbb
\let\wt=\widetilde
\let\llb=\llbracket
\let\rrb=\rrbracket
\let\ol=\overline
\let\mc=\mathcal
\title[Deformed dimensional reduction]{Deformed dimensional reduction}
\author{Ben Davison and Tudor P\u adurariu}
\begin{document}
 
\begin{abstract} 
Since its first use by Behrend, Bryan, and Szendr\H{o}i in the computation of motivic Donaldson--Thomas (DT) invariants of $\AA_{\CC}^3$,
dimensional reduction has proved to be an important tool in motivic and cohomological DT theory. 
Inspired by a conjecture of Cazzaniga, Morrison, Pym, and Szendr\H{o}i on motivic DT invariants, work of Dobrovolska, Ginzburg, and Travkin on exponential sums, and work of Orlov and Hirano on equivalences of categories of singularities, we generalize the dimensional reduction theorem in motivic and cohomological DT theory and use it to prove versions of the Cazzaniga--Morrison--Pym--Szendr\H{o}i conjecture in these settings.
\end{abstract}

\maketitle
\setcounter{tocdepth}{1}
\tableofcontents

\section{Introduction}
This paper concerns generalizations of dimensional reduction in Donaldson--Thomas (DT) theory, which has proven to be an indispensable tool in calculating DT invariants in various versions of the theory:
motivic \cite{BBS}, cohomological \cite{Dav16b}, and K-theoretic \cite{P}.  

\subsection{Dimensional reduction}
\label{DimRedSec}
We start by recalling a motivic version of the dimensional reduction theorem, which is a slight variant of the one proved in \cite{BBS}.  Let $X$ be a complex algebraic variety, and let $\GG_m$ act on 
\begin{equation}
\label{set2}    
\overline{X}=X\times\AAA{m}
\end{equation}
by $z\cdot (x,t)=(x,zt)$.  Assume that $g\in \Gamma(\overline{X})$ is a regular degree one function, so that we may write 
\begin{equation}
    \label{set1}
g=\sum_{1\leq j\leq m} g_jt_j
\end{equation}
where $g_j$ are functions pulled back from $X$ and $t_1,\ldots,t_m$ are coordinates on $\AAA{m}$.  Let $\overline{Z}\subset\overline{X}$ be the reduced vanishing locus of the functions $g_1,\ldots,g_m$. The theorem states that 
\begin{equation}
\label{mdr}
\int[\phi_g]=\LL^{-\frac{\dim(\overline{X})}{2}}[\overline{Z}]\in\Khat{}(\Var/\pt)
\end{equation}
where $\int[\phi_g]$ is the absolute motivic vanishing cycle defined by Denef and Loeser \cite{DL01}.  Their definition lies in a ring of $\muhat$-equivariant motives, whereas the identity (\ref{mdr}) takes place in a naive Grothendieck ring of motives with no monodromy; part of the statement of the theorem is that the monodromy on the left hand side of (\ref{mdr}) is in fact trivial, so that this makes sense. 
\smallbreak
There is a cohomological version of the theorem as well.  Let 
\[
\phim{g}\colon\Dub(\MHM(\overline{X}))\rightarrow\Dub(\MMHM(\overline{X}))
\]
be the vanishing cycle functor.  Note the extra ``M'' appearing in the target category; this stands for monodromy, and is again accounted for by the monodromy automorphism on the vanishing cycles.  By construction, for any mixed Hodge module $\mathcal{F}$, $\phim{g}\mathcal{F}$ is supported on $\overline{X}_0\colonequals g^{-1}(0)$, and moreover there is a natural transformation $\phim{g}\mathcal{F}\rightarrow \mathcal{F}\lvert_{\overline{X}_0}$.  Since $\overline{Z}\subset \overline{X}_0$, we can restrict further to obtain the natural transformation
\begin{equation}
\label{fv}
\phim{g}\mathcal{F}\rightarrow\mathcal{F}\lvert_{\overline{Z}}.
\end{equation}
Denote by $i\colon\overline{Z}\hookrightarrow \overline{X}$ the inclusion.  We can alternatively obtain (\ref{fv}) by applying $\phim{g}$ to the natural transformation 
\begin{equation}
\label{nti}
\id\rightarrow i_*i^*,
\end{equation}
since the vanishing cycle functor commutes with proper maps, is the identity functor for the zero function, and $gi=0$.  The cohomological dimensional reduction theorem \cite[Thm.A.1]{DAV} states that the natural map
\begin{equation}
\label{urdr}
\pi_!\phim{g}(\id\rightarrow i_*i^*)\pi^*
\end{equation}
is an isomorphism for $\mathcal{G}\in \MHM(X)$.  Just as in the motivic version of the theorem, the target has trivial monodromy, since $\pi_!\phim{g}i_*i^*\pi^*\cong\pi_!i_*i^*\pi^*$.  

Let $r\colon S\hookrightarrow X$ be the inclusion of a subvariety, let $\overline{S}=\pi^{-1}(S)$, and let $\tau\colon S\rightarrow\pt$ be the structure morphism.  Then a consequence of the theorem is the statement that 
\[
\tau_!r^*\pi_!\phim{g}(\id\rightarrow i_*i^*)\pi^*\mathbb{Q}_X
\]
is an isomorphism, and so there is an isomorphism of (monodromic) mixed Hodge structures
\begin{equation}
    \label{ADR}
\HO_c(\overline{S},\phim{g}\QQ_{\overline{X}})\cong\HO_c(\overline{Z}\cap \overline{S},\mathbb{Q})
\end{equation}
where the right hand side has trivial monodromy.  This is the special case that is used most often.
\smallbreak
\subsection{Cohomological deformed dimensional reduction}
The starting point of this paper is the question of whether we can generalize in the following way.  Assume instead that for $g\in\Gamma(\ol{X})$ we can write 
\[
g=g_0+\sum_{1\leq j\leq m}g_j t_j
\]
where $g_0,\ldots,g_m$ are again pulled back from functions on $X$. Write
\begin{equation}
\label{gredDef}
g^{\red}\colonequals  g_0\lvert_{\overline{Z}}
\end{equation}
where $\overline{Z}$ is the vanishing locus of $g_1,\ldots,g_m$ as before.  The natural transformation (\ref{urdr}) is still defined, and we obtain from it a natural transformation
\begin{equation}
\label{dri}
\pi_!\phim{g}\pi^*\rightarrow\pi_!i_*\phim{g^{\red}}i^*\pi^*,
\end{equation}
bearing in mind that $g\lvert_{\overline{Z}}=g^{\red}$.  The purpose of this paper is to answer the following
\begin{question}
\label{mainq}
Is \eqref{dri} an isomorphism?  
\end{question}
By applying the natural transformation \eqref{dri} to $\QQ_X$ and taking total compactly supported hypercohomology, we obtain as before a homomorphism of monodromic mixed Hodge structures
\[
\HO_c(\overline{X},\phim{g}\QQ_{\overline{X}})\rightarrow\HO_c(\overline{Z},\phim{g^{\red}}\mathbb{Q}_{\overline{Z}})
\]
and we may ask if it is an isomorphism.  Obviously a positive answer to Question \ref{mainq} implies a positive answer to this question.  

There are several situations in which the answer to Question \ref{mainq} is yes:
\begin{enumerate}[(A)]
\item
If $g_0=0$, then $\phim{g^{\red}}\cong\id$ and (\ref{dri}) becomes naturally isomorphic to (\ref{urdr}) which is an isomorphism by the usual dimensional reduction theorem.
\item
Let $X=T\times\AAA{p}$, with $g_1,\ldots,g_m$ pulled back from $T$, and $g_0=\sum_{1\leq j\leq p}h_jt'_j$, with $t'_j$ coordinates on $\AAA{p}$ and $h_1,\ldots,h_p$ pulled back from $T$, and assume furthermore that $\mathcal{G}\cong\pi_{T}^*\mathcal{G}'$ for some $\mathcal{G}'\in\Ob(\Dub(\MHM(T)))$.  Then by two applications of the dimensional reduction theorem, (\ref{dri}) is an isomorphism when applied to $\mathcal{G}$.
\item
Let $X=X_1\times X_2$ with $g_1,\ldots,g_m$ pulled back from $X_1$ and $g_0$ pulled back from $X_2$.  Then one may prove, using the Thom--Sebastiani isomorphism and the usual dimensional reduction isomorphism, that (\ref{dri}) is an isomorphism.
\item
\label{bifur}
Let $X=\AAA{1}=\mathrm{Spec}(\CC[x])$, and set $n=1$.  Consider the regular functions $g_1(x)=x^a$ and $g_0(x)=x^b$ with $b\geq a$.  We have that $\overline{X}=\AAA{2}$ and $\pi$ is the projection map
\begin{align*}
    \pi\colon &\AAA{2}\rightarrow\AAA{1}\\
    &(x,t)\mapsto x
\end{align*}
and 
\begin{align*}
    g=&x^a(t+x^{b-a}).
\end{align*}
We have $Z=Z(x)=0\in\AAA{1}$ and $\overline{Z}=\AAA{1}$, embedded in $\AAA{2}$ as the $y$-axis.  The singular locus of $g$ is contained in this copy of $\AAA{1}$, which is the fiber of $\pi$ over zero, and so  \begin{equation}
    \nonumber
\pi_!\phim{g}\QQ_{\overline{X}}=\HO_c(\AAA{2},\phim{g}\QQ_{\AAA{2}})\otimes \QQ_{\{0\}}.  
\end{equation}
Since $g^{\red}=0$, it follows that $\phim{g^{\red}}\QQ_{\overline{Z}}=\QQ_{\overline{Z}}$ and thus
\begin{equation}
    \nonumber
\pi_!\phim{g^{\red}}\QQ_{\overline{Z}}=\HO_c(\AAA{1},\QQ)\otimes\QQ_{\{0\}}.
\end{equation}
Finally, rewriting $g=x^at'$, where we have changed coordinates by setting $t'=t+x^{b-a}$, we deduce that 
\[
\HO_c(\AAA{2},\phim{g}\QQ_{\AAA{2}})\cong \HO_c(\AAA{1},\QQ)
\]
via the usual dimensional reduction theorem, and so $\pi_!\phim{g}\QQ_{\overline{X}}$ and $\pi_!\phim{g^{\red}}\QQ_{\overline{Z}}$ are isomorphic.  Furthermore, one can show that \eqref{dri} is indeed an isomorphism.

\end{enumerate}

As encouraging as these observations are, it turns out that it is not hard to cook up examples for which the answer to Question \ref{mainq} is no.  For instance, modify example (D) from above so that now $a$ and $b$ satisfy $a> b$.  Then again $Z=Z(x)$, and $g^{\red}=0$, so that the right hand side of (\ref{dri}) is again given by $\HO_c(\AAA{1},\QQ)\otimes\QQ_{\{0\}}$, and so is nontrivial, but with trivial monodromy.  We now write
\[
g=x^b(tx^{a-b}+1).
\]
If $b=1$, zero is not a critical value, and so the left hand side of (\ref{dri}) is zero when applied to the constant sheaf $\QQ_{X}$. If $b>1$, one can check that
\[
\phim{g}\QQ_{\AAA{2}}\cong \HO_c(\AAA{1},\phim{x^b})\otimes\QQ_{\overline{Z}}
\]
where $\overline{Z}\cong\AAA{1}$.  In particular, $\pi_!\phim{g}\QQ_{\AAA{2}}$ has \textit{nontrivial} monodromy.  Putting these observations together, we see that there can be no isomorphism (\ref{dri}) in case $a>b$.


Considering the well behaved and badly behaved variants of (D) above, we see that the dimensional reduction morphism is an isomorphism if and only if there is a non-negative weighting of $x$ and $t$ making $g$ a quasihomogeneous function with positive weight. This brings us to our main theorem, which will be proved in \S \ref{MainThmSec}. We first state it in a particular case, which will make the comparison with the usual dimensional reduction theorem considered in \S \ref{DimRedSec} transparent:

\begin{theorem}
\label{MainThmCor}
Let $\overline{X}=X\times\AAA{n}\xrightarrow{g}\AAA{1}$ be a $\GG_m$-equivariant function, where $\GG_m$ acts trivially on $X$, with non-negative weights on $\AAA{n}$, and with positive weight on $\AAA{1}$. Assume furthermore that there is a $\GG_m$-equivariant decomposition $\AAA{n}=\AAA{m}\times\AAA{n-m}$ and that we can write 
\[
g=g_0+\sum_{1\leq j\leq m}g_jt_j
\]
with the functions $g_0,\ldots,g_m$ pulled back from $X\times\AAA{n-m}$.  Let $\pi\colon \overline{X}\rightarrow X$ be the natural projection.  Then the dimensional reduction natural transformation
\begin{equation}
\label{prime3}
\pi_!\phim{g}\pi^*\rightarrow\pi_!i_*\phim{g^{\red}}i^*\pi^*
\end{equation}
is an isomorphism of functors, and for $S$ and $\ol{S}$ as in Theorem \ref{MainThm} there is a natural isomorphism
\begin{equation*}
\HO_c(\overline{S},\phim{g}\QQ_{\overline{X}})\cong \HO_c(\overline{Z}\cap \overline{S},\phim{g^{\red}}\QQ_{\overline{Z}}).
\end{equation*}
\end{theorem}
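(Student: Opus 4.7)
The plan is to reduce Theorem~\ref{MainThmCor} to the undeformed dimensional reduction theorem \eqref{urdr} (case (A) above), by enlarging $\overline{X}$ with an auxiliary coordinate that re-expresses the deformation term $g_0$ as part of a fully linear function. The $\GG_m$-equivariance and positive weight on the target of $g$ are the essential structural inputs that will control the subsequent specialization step.

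First I would form the auxiliary space $\overline{Y}\colonequals\overline{X}\times\AAA{1}$ with a new coordinate $u$, and introduce the function
\[
G(x,t^{(1)},t^{(2)},u)\colonequals u\cdot g_0(x,t^{(1)})+\sum_{j=1}^m g_j(x,t^{(1)})\,t_j,
\]
which is \emph{linear} in the $m{+}1$ fiber coordinates $(u,t_1,\ldots,t_m)$ with coefficients $g_0,g_1,\ldots,g_m$ pulled back from $X\times\AAA{n-m}$. Extending the $\GG_m$-action by trivial weight on the new $u$-factor makes $G$ $\GG_m$-equivariant of weight $w$ and of the undeformed linear form, so the usual dimensional reduction theorem \eqref{urdr} applies directly to $G$. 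This produces a functorial isomorphism between $\widetilde{\pi}_!\phim{G}\widetilde{\pi}^*$ and the pushforward from the vanishing locus of $g_0,g_1,\ldots,g_m$, where $\widetilde{\pi}\colon\overline{Y}\to X\times\AAA{n-m}$ is the natural projection that forgets $u$ and $t^{(2)}$.

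The main step is then to slice this isomorphism along the $u$-axis. The slice $\{u=1\}\subset\overline{Y}$ recovers the original setting with $G|_{u=1}=g$ and the natural transformation \eqref{prime3}, while the slice $\{u=0\}$ recovers the undeformed case $G|_{u=0}=\sum_j g_j t_j$. The hard part will be making this slicing rigorous at the level of functors: a priori, restricting vanishing cycle sheaves to non-generic hyperplanes produces nearby-cycle and monodromy contributions that need not vanish. The $\GG_m$-equivariance of $G$ with positive weight on the target is exactly what forces the relevant monodromy to be trivial --- the positive-weight action rescales the fibers of $G$ coherently and controls the specialization functor along $u$, so that slicing at $u=1$ and $u=0$ produces two compatible instances of \eqref{prime3}, both of which are then isomorphisms. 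Running the same argument for the reduced function $g^{\red}=g_0|_{\overline{Z}}$ identifies the right-hand side of \eqref{prime3}, and together this yields the desired functorial isomorphism. The cohomology statement for $\overline{S}\cap\overline{Z}$ is then obtained by applying $\tau_! r^*$ to the $\QQ_X$-evaluation of \eqref{prime3}, exactly as in the derivation of \eqref{ADR}.
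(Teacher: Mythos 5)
The weak point is the ``slicing along the $u$-axis'' step, and as described it cannot be repaired. To apply the undeformed theorem \eqref{urdr} to $G=u g_0+\sum_j g_j t_j$ you must treat $u$ as one of the linear fibre coordinates, so the output is an isomorphism of complexes on $X\times\AAA{n-m}$ in which the $u$-direction (together with $t_1,\ldots,t_m$) has already been pushed forward: there is no object left living over the $u$-line, hence nothing to restrict to $\{u=1\}$ or $\{u=0\}$. If instead you keep $u$ as a base direction and only reduce along $t_1,\ldots,t_m$, then $G$ is no longer of the undeformed shape: over the base $X\times\AAA{n-m}\times\AAA{1}_u$ it is $\sum_j g_jt_j$ plus the nonzero term $ug_0$ pulled back from the base, i.e.\ exactly an instance of the deformed statement you are trying to prove, so the argument becomes circular. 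Finally, comparing $\phim{G}$ (or its pushforward) along the slice $u=1$ with $\phim{G|_{u=1}}=\phim{g}$ is a specialization problem for vanishing cycles, which is precisely the difficulty of the theorem; the appeal to $\GG_m$-equivariance gives no mechanism here, because to keep $G$ semi-invariant you are forced to give $u$ weight $0$, and then the action preserves every slice $\{u=c\}$ and says nothing about comparing different slices.

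There is also a concrete obstruction showing that no restriction of the $G$-isomorphism can produce \eqref{prime3}: undeformed reduction identifies $\widetilde{\pi}_!\phim{G}\widetilde{\pi}^*$ with the pushforward of a restriction to $Z(g_0,g_1,\ldots,g_m)\times\AAA{m+1}$, an object with trivial monodromy supported over the locus where $g_0$ also vanishes, whereas both sides of \eqref{prime3} live over $Z(g_1,\ldots,g_m)$ and in general carry nontrivial monodromy through $\phim{g^{\red}}$ (for $\LQ{3}$ with $W_d$ the answer involves $\HO_c(\AAA{1},\phim{t^d}\QQ)$, Theorem \ref{cohdef}). Reading your construction instead as an interpolation $G_u=ug_0+\sum_j g_jt_j$, the family $\pi_!\phim{G_u}\pi^*$ genuinely jumps at $u=0$: for $u\neq 0$ a coordinate change identifies $G_u$ with a rescaling of $g$, giving $\pi_!i_*\phim{g^{\red}}i^*\pi^*$ by the theorem, while at $u=0$ one gets $\pi_!i_*i^*\pi^*$, and these differ whenever $g^{\red}$ has nontrivial vanishing cycles; so the two slices are not ``compatible instances'' in any sense that lets you transport the isomorphism from $u=0$ to $u=1$. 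The paper's proof avoids all of this by a different route: reduce to $m=1$, pass to underlying constructible sheaves and Verdier duals, and compare nearby cycles with restriction to the fibre over $1$ using that the relevant complexes are locally constant on $\GG_m$-orbits (Propositions \ref{vanishing_prop} and \ref{nby_prop}, Lemma \ref{shift_lemma}), followed by a diagram chase; some substitute for this specialization analysis is unavoidable in any correct argument.
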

Note that Theorem \ref{MainThmCor} implies the usual dimensional reduction theorem.  Indeed, if $g$ is as in \eqref{set1} and $\ol{X}$ is as in \eqref{set2}, we give $\AAA{m}$ the scaling action, set $n=m$, and the conditions of the theorem are satisfied.

Before we state the general form of our main theorem, we recall the following basic construction: let 
\begin{equation}
    \label{sescoh}
0\rightarrow \mathscr{V}'\xrightarrow{} \mathscr{V}\rightarrow\mathscr{V}''\rightarrow 0
\end{equation}
be a short exact sequence of locally free sheaves on a variety $X$.  We denote by 
\begin{align*}
\rho''\colon&\SpecSym(\mathscr{V})\rightarrow \SpecSym(\mathscr{V}')\\\rho'\colon&\SpecSym(\mathscr{V}')\rightarrow X
\end{align*}
the induced maps of varieties.  Locally we can split the short exact sequence and write \[\mathscr{V}_U\cong\mathscr{O}_U^{\oplus n}\cong \mathscr{O}_U^{\oplus m}\oplus \mathscr{O}_U^{\oplus (n-m)},\,\,\,\,\mathscr{V}'_U\cong \mathscr{O}_U^{\oplus (n-m)}.\]
This induces an isomorphism \[\Sym_{\mathscr{O}_U}(\mathscr{V}_U)\cong \Sym_{\mathscr{O}_U}(\mathscr{V}'_U)\otimes \Sym_{\mathscr{O}_U}(\mathscr{V}''_U)\]
and a decomposition of $\Sym_{\mathscr{O}_U}(\mathscr{V}_U)$ by degree in $\Sym_{\mathscr{O}_U}(\mathscr{V}''_U)$.  This decomposition depends on the splitting, but the degree filtration does not, so that the degree filtration is well-defined.  Moreover, if we denote by $\mathscr{D}_{\mathscr{V}',\leq 1}\subset \Sym_{\mathscr{O}_X}(\mathscr{V})$ the $\mathscr{O}_{X}$-submodule of degree one functions, there is a short exact sequence
\[
0\rightarrow \Sym_{\mathscr{O}_X}(\mathscr{V}')\rightarrow \mathscr{D}_{\mathscr{V}',\leq 1}\xrightarrow{\xi}\rho'^*\mathscr{V}''\rightarrow 0. 
\]

\begin{theorem}
\label{MainThm}
Let $\mathscr{V}$ be a $\GG_m$-equivariant locally free sheaf on a variety $X$, where $\GG_m$ acts with non-negative weights on $\mathscr{V}$, and equip $\ol{X}=\SpecSym(\mathscr{V})$ with the $\GG_m$-action from $\mathscr{V}$.  Let $g\in\Gamma(\ol{X})^{\chi}$ be a $\GG_m$-semi-invariant function on $\ol{X}$ for a positive $\GG_m$-character $\chi$.  Assume furthermore that there is a short exact sequence of locally free $\GG_m$-equivariant sheaves \eqref{sescoh} and that $g\in\mathscr{D}_{\mathscr{V}',\leq 1}$.

Define $i:\ol{Z}\hookrightarrow \ol{X}$ to be the inclusion of $\rho''^{-1}(Z(\xi(g)))$.  Define $g^{\red}\in\Gamma(\ol{Z})$ as in (\ref{gredDef}), and let $\pi\colon \ol{X}\rightarrow X$ be the natural projection. 
  Then the dimensional reduction natural transformation
\begin{equation}
\label{DRI}
\pi_!\phim{g}\pi^*\rightarrow\pi_!i_*\phim{g^{\red}}i^*\pi^*
\end{equation}
is an isomorphism of functors.  In particular, for $S\subset X$ a subvariety and $\overline{S}=\pi^{-1}(S)$, there is a natural isomorphism of monodromic mixed Hodge structures
\begin{equation}\label{prime2}
\HO_c(\overline{S},\phim{g}\QQ_{\overline{X}})\cong \HO_c(\overline{Z}\cap \overline{S},\phim{g^{\red}}\QQ_{\overline{Z}}).
\end{equation}
\end{theorem}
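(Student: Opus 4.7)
The plan is to first reduce Theorem~\ref{MainThm} to its particular case Theorem~\ref{MainThmCor} by a Zariski-local splitting argument, and then prove the local case by leveraging the positive-weight $\GG_m$-equivariance to compare to the usual dimensional reduction. For the first reduction: since $\pi_!$, $\phim{g}$, $i_*$, $\pi^*$ all commute (in the appropriate sense) with pullback along open immersions $U\hookrightarrow X$, and being an isomorphism of~\eqref{DRI} may be verified on a Zariski cover, I restrict to a $\GG_m$-equivariant open on which $\mathscr{V}$ is trivial and~\eqref{sescoh} splits $\GG_m$-equivariantly. This yields a decomposition $\ol{X}\cong X\times\AAA{n-m}\times\AAA{m}$ with $g=g_0+\sum_{j=1}^m g_jt_j$ of the form required in Theorem~\ref{MainThmCor}.

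\textbf{Support and reduction to a vanishing.} The computation $\partial g/\partial t_j=g_j$ shows that $\crit(g)\subseteq\ol{Z}$, so $\phim{g}\QQ_{\ol{X}}$ is supported on $\ol{Z}$. Applying $\pi_!\phim{g}$ to the excision triangle $j_!\QQ_{\ol{X}\setminus\ol{Z}}\to\QQ_{\ol{X}}\to i_*\QQ_{\ol{Z}}$, where $j$ denotes the open complement inclusion, and using the standard compatibility $\phim{g}\,i_*\cong i_*\,\phim{g^{\red}}$, the theorem is equivalent to the vanishing
\[
\pi_!\,\phim{g}\,j_!\,j^*\,\pi^*\mathcal{F}=0
\quad\text{for all }\mathcal{F}\in\Dub(\MHM(X)).
\]

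\textbf{The main step.} To obtain this vanishing, the natural strategy is to introduce an auxiliary affine coordinate $s$ and pass to the function $\tilde g=s\,g_0+\sum g_jt_j$ on $\ol{X}\times\AAA{1}$. Since $\tilde g$ is linear in the extra variables $(s,t_1,\ldots,t_m)$ with coefficients pulled back from $X\times\AAA{n-m}$, the ordinary (linear) dimensional reduction theorem applies to $\tilde g$ and gives a clean description of $\tilde\pi_!\phim{\tilde g}$ as $\tilde\pi_!$ of a constant sheaf on $Z(g_0,g_1,\ldots,g_m)\times\AAA{m+1}$. The $\GG_m$-semi-invariance of $g$ with positive weight $\chi$ then provides the leverage to transfer this information from the $s=0$ slice of $\tilde g$ (which equals $\sum g_jt_j$, in the setting of usual dimensional reduction) to the $s=1$ slice (which recovers the original $g$), via a $\GG_m$-equivariant specialization argument in the $s$-direction.

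\textbf{Main obstacle.} The hard part is precisely this last step --- rigorously implementing the $\GG_m$-equivariant specialization at the level of monodromic mixed Hodge modules, with careful bookkeeping of the monodromy automorphism. The positive weight of $\chi$ is exactly what forces the spurious monodromic contributions from $\ol{X}\setminus\ol{Z}$ to cancel in $\pi_!$; the failure case with $a>b$ in example~(D), where no such positive weighting exists, shows that without this hypothesis a nontrivial monodromy survives in $\pi_!\phim{g}\QQ_{\ol{X}}$ which is absent from the right hand side of~\eqref{DRI}.
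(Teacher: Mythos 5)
Your reduction to the split local case and your reformulation of \eqref{DRI} as the vanishing $\pi_!\phim{g}j_!j^*\pi^*\mathcal{F}=0$ (via the excision triangle and $\phim{g}i_*\cong i_*\phim{g^{\red}}$) are both correct, and the first step matches the paper's Proposition \ref{2implies3}. But the core of the theorem --- the only place where the positive-weight $\GG_m$-hypothesis enters --- is exactly the step you label ``main obstacle'' and do not carry out, so the proposal has a genuine gap rather than a proof. Moreover, the route you sketch is doubtful on its own terms. Ordinary dimensional reduction for $\tilde g=sg_0+\sum_j g_jt_j$ applies to complexes pulled back from the base $X\times\AAA{n-m}$ along the projection killing all the linear variables $(s,t_1,\ldots,t_m)$; the complex you actually need to feed in, namely $\sigma_{1*}\pi^*\mathcal{F}$ supported on the slice $s=1$ (using $\phim{\tilde g}\sigma_{1*}\cong\sigma_{1*}\phim{g}$), is not of that form, so the undeformed theorem gives you no handle on $\pi_!\phim{g}$. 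And any ``specialization from the $s=0$ slice to the $s=1$ slice'' cannot simply transport the answer: the two slices genuinely differ, since at $s=0$ the pushforward is $\QQ_{\ol{Z}}$ with trivial monodromy while at $s=1$ it must be $i_*\phim{g^{\red}}$, which in the cases of interest (e.g.\ the deformed Weyl potential) carries nontrivial monodromy. Making such a specialization statement precise and correct is essentially equivalent to the theorem itself.

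For comparison, the paper's proof proceeds quite differently: after reducing to $m=1$, passing to the underlying constructible complexes and Verdier dualizing, it establishes two $\GG_m$-equivariance lemmas --- $\pi_*j_!\mathcal{F}=0$ for sheaves locally constant on orbits, where $j$ is the complement of $g^{-1}(0)$ (Proposition \ref{vanishing_prop}, proved via a weighted blow-up), and $\pi_*\psi_g\cong\pi_*\kappa_{1*}\kappa_1^*$ comparing nearby cycles with restriction to the fiber over $1$ (Proposition \ref{nby_prop}) --- together with the shift isomorphism $\kappa_1^*\cong\kappa_1^![2]$ of Lemma \ref{shift_lemma}. A diagram of distinguished triangles then reduces \eqref{finalv} to a base-change statement over the fiber $g^{-1}(1)$ and a homotopy-invariance argument along the fibers of $\pi$. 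If you want to complete your approach, you would need to supply an analogue of precisely these comparison statements; no auxiliary variable $s$ is needed, and introducing it does not appear to simplify the problem.
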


If $\mathscr{V}=\mathscr{O}_X^{\oplus n}$ and $\mathscr{V}'=\mathscr{O}_X^{\oplus (n-m)}$ for some $m,n$, and we have a split short exact sequence
\[
0\rightarrow \mathscr{V}'\rightarrow \mathscr{V}\rightarrow \OO_X^{\oplus m}\rightarrow 0,
\]
then the statement of Theorem \ref{MainThm} simplifies to the statement of Theorem \ref{MainThmCor}.  Indeed, since it is enough to prove locally that \eqref{DRI} is an isomorphism, it will be enough to prove that \eqref{prime3} is; see \S \ref{MainThmSec} for the details.

\subsection{Motivic deformed dimensional reduction}
Just as in the case of ordinary dimensional reduction, there is a motivic version of Theorem \ref{MainThm}; we prove this version as Theorem \ref{MotDDR}, and give its exact statement here.

We assume the setup of Theorem \ref{MainThm}, and moreover that $X$ is a smooth connected variety.
Assume that $g$ has weight $d$.  For $t\in \CC$, define $\overline{Z}_t\colonequals (g^{\red})^{-1}(t)$.  We endow $\overline{Z}_0$ with the trivial $\muhat$-action, and endow $\overline{Z}_1$ with the $\muhat$-action factoring through the natural $\mu_d$-action given by restricting the $\GG_m$-action on $\overline{Z}$.  The statement\footnote{Actually, the statement takes place inside the isomorphic ring $\Khat{\mon}(\Var/X)$, which turns out to be much easier to work with.  See \S \ref{MotBackground} for details.} of Theorem \ref{MotDDR} is that there is an equality
\begin{equation}
\label{MotDDRst}    
\pi_{X,!}[\phi^{\mon}_g]=\LL^{-\frac{\dim(\overline{X})}{2}}\left([\overline{Z}_0\xrightarrow{} X]-[\overline{Z}_1\xrightarrow{} X]\right)\in \Khat{\muhat}(\Var/X).
\end{equation}

Note that, in contrast to \eqref{mdr}, the monodromy here may be nontrivial; our main application, detailed in the following section, is such a case.  

Theorem \ref{MotDDR} turns out to be a consequence of a theorem of Nicaise and Payne \cite{NP17}, stating that for certain $\GG_m$-equivariant functions $g$, the ``naive'' motivic nearby fiber $[g^{-1}(1)]$ and the motivic nearby fiber defined by Denef and Loeser agree.

\subsection{Motivation from Donaldson--Thomas theory}\label{motiv}
One of the motivations for searching for a generalization of the dimensional reduction isomorphism was a conjecture of Cazzaniga, Morrison, Pym, and Szendr\H{o}i, regarding the motivic Donaldson--Thomas invariants of the quiver $\LQ{3}$ with three loops $a,b,c$, and with the homogeneous deformed Weyl potential $W_3=a[b,c]+c^3$.  The definition of these invariants is recalled in \S \ref{DTdefs}.  They conjectured that for all $n\in\mathbb{Z}_{\geq 1}$, there is an equality 
\[
\Omega_{\LQ{3},W_3,n}=\LL^{1/2}(1-[\mu_3]).
\]

For $d\geq 2$, we can consider the DT theory for the (quasihomogeneous) deformed Weyl potential $W_d=a[b,c]+c^d$ via deformed dimensional reduction of the potential $\wt{W}=a[b,c]$ (in fact in this paper we will treat all quasihomgeneous deformations of $\wt{W}$ in the two variables $b,c$).  The DT theory of the undeformed pair $(\LQ{3},\wt{W})$ is very well understood: Behrend, Bryan and Szendr\H{o}i proved that the motivic DT invariants for $(\LQ{3},\wt{W})$ are given by $\LL^{3/2}$ for all $n$. 

Later, in \cite{Dav16b}, the cohomological DT invariants of $(\LQ{3},\wt{W})$ were calculated, along with their relative versions.
The cohomological version of the DT theory of a quiver with potential $(Q,W)$ is recalled in \S \ref{coDTdefs} below.  The central objects of study in this theory are the \textit{BPS sheaves} $\BPS_{Q,W,\gamma}$ on $\Msp_\gamma(Q)$, the coarse moduli space of $\gamma$-dimensional $Q$-representations; the refined DT invariants for the pair $(Q,W)$ are obtained by taking weight polynomials of the compactly supported hypercohomology of these sheaves \cite{DaMe15b}.  Using purity, it was shown in \cite[Sec.5]{Dav16b} that 
\begin{equation}
    \label{3loopBPS}
\BPS_{\LQ{3},\wt{W},n}=\Delta_{n,*}\QQ_{\AAA{3}}\otimes \LLL^{-3/2}
\end{equation}
where 
\begin{align*}
\Delta_n\colon&\AAA{3}\rightarrow \Msp_n(\LQ{3})\\
&(x,y,z)\mapsto(x\cdot\Id_{n\times n}, y\cdot\Id_{n\times n}, z\cdot\Id_{n\times n})
\end{align*}
and $\LLL^{1/2}$ is a half Tate twist.

Firstly, using the motivic version of the deformed dimensional reduction theorem, we verify the Cazzaniga--Morrison--Pym--Szendr\H{o}i conjecture (the case $d=3$ corresponds to their original conjecture):

\begin{theorem}\label{motdef}
For all $d\geq 2$, the motivic DT invariants of $(\LQ{3},W_d)$ are
\[
\Omega_{\LQ{3},W_d,n}=\LL^{1/2}(1-[\mu_d])
\]
for all $n\in\ZZ_{\geq 1}$.
\end{theorem}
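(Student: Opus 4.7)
My approach is to apply the motivic deformed dimensional reduction theorem (Theorem~\ref{MotDDR}) to $(\LQ{3}, W_d)$ and compare with the known computation of Behrend, Bryan, and Szendr\H{o}i for the undeformed pair $(\LQ{3}, \wt{W})$, where $\Omega_{\LQ{3}, \wt{W}, n} = \LL^{3/2}$.

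First I verify the hypotheses of Theorem~\ref{MotDDR}. At dimension vector $n$, the representation space $\End(\CC^n)^{\times 3}$ with coordinates $(a,b,c)$ carries the $\GG_m$-action with weights $(d-1, 0, 1)$, making $W_d$ semi-invariant of weight $d > 0$ with non-negative weights on all variables. Decomposing $W_d = \Tr(a[b,c]) + g_0$ with $g_0 = \Tr(c^d)$ pulled back from the $(b,c)$-direction fits the setup of Theorem~\ref{MotDDR}, with the $a$-coordinates in the role of $\AAA{m}$. Applying \eqref{MotDDRst} (and descending to the stack by dividing by $[\Gl_n]$), the vanishing locus becomes the commuting scheme $C_n = \{(b,c) \in \End(\CC^n)^{\times 2} : [b,c] = 0\}$ (times the $a$-direction, accounted for by the $\LL^{-n^2/2}$ normalization), and $g^{\red} = \Tr(c^d)$ becomes a function on $C_n$. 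The formula expresses the motivic vanishing cycle $\pi_! [\phim{W_d}]$ at level $n$ as the $\muhat$-equivariant difference $\LL^{-n^2/2}([C_n^0] - [C_n^1])/[\Gl_n]$, where $C_n^t = C_n \cap \{\Tr(c^d) = t\}$ and $C_n^1$ carries the residual $\mu_d$-monodromy coming from the $\GG_m$-scaling of $c$.

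Second, I extract BPS invariants via the motivic integration / plethystic log. For $\wt{W}$ ($g_0 = 0$), the formula reduces to the ordinary dimensional reduction and yields $\LL^{-n^2/2}[C_n]/[\Gl_n]$, which by Behrend--Bryan--Szendr\H{o}i integrates to $\Omega_n(\wt{W}) = \LL^{3/2}$. For $W_d$, the key step is to relate $[C_n^0] - [C_n^1]$ to $[C_n]$ together with the motivic Milnor fiber $[\mu_d] - 1$ of $z^d$ at the origin: using the $\GG_m$-scaling, the fibration $\Tr(c^d) : C_n \setminus C_n^0 \to \GG_m$ becomes $\mu_d$-equivariantly trivialized after pulling back along the $d$-th power map, yielding a $\muhat$-equivariant identity in $\Khat{\muhat}(\Var)$ that cleanly separates the $(1-[\mu_d])$ factor. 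Combined with the undeformed reference and the plethystic-log machinery, this yields $\Omega_{\LQ{3}, W_d, n} = \LL^{1/2}(1 - [\mu_d])$.

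The main obstacle is the careful bookkeeping of the $\muhat$-monodromy through the plethystic-log step, which must take place in $\Khat{\muhat}(\Var)$ rather than the naive Grothendieck ring, and the identification of the Tate shift $\LL^{3/2} \to \LL^{1/2}$ in passing from the undeformed to the deformed BPS invariant; heuristically this shift reflects the codimension-one drop in the virtual critical locus caused by the extra equation $[a,b] + dc^{d-1} = 0$. A useful sanity check is the expected cohomological BPS-sheaf formula $\BPS_{\LQ{3}, W_d, n} \cong \Delta_{n,*}(\QQ_{\AAA{2}} \boxtimes \phim{z^d}\QQ_{\AAA{1}}) \otimes \LL^{-3/2}$, to be compared with \eqref{3loopBPS}, which reproduces the predicted motivic invariant and in particular explains its independence of $n$.
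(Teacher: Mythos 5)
Your first step is exactly the paper's: applying Theorem \ref{MotDDR} with weights $(d-1,0,1)$ on $(a,b,c)$ gives the identity \eqref{nilpComm}, expressing $[\XX_n(\LQ{3},W_d)]_{\vir}$ as $\LL^{-n^2/2}$ times the class of the commuting variety $\Comm_n$ equipped with the function $(b,c)\mapsto\Tr(c^d)$. The genuine gap is in your second step. You propose to "relate $[C_n^0]-[C_n^1]$ to $[C_n]$ together with the motivic Milnor fiber of $z^d$" and then feed this into the plethystic log, using only the undeformed (Behrend--Bryan--Szendr\H{o}i / Feit--Fine) input. But the absolute classes $[\Comm_n]$ (equivalently the undeformed DT invariants $\LL^{3/2}$) do not determine how the function $\Tr(c^d)$ is distributed on $\Comm_n$, i.e.\ they do not determine the classes $[C_n^0]$ and $[C_n^1]$ with their monodromy. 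The $\GG_m$-scaling argument you invoke only identifies the monodromic class of the far fiber with the naive fiber carrying its $\mu_d$-action (this is essentially the Nicaise--Payne mechanism already used inside Theorem \ref{MotDDR}); it does not "separate a $(1-[\mu_d])$ factor" from $[\Comm_n]$, since $\mu_d$ acts on $C_n^1$ in a complicated way (eigenvalues of $c$ split into pieces with $\sum e_i y_i^d=1$ contributing varied monodromy types), and the clean factor $1-[\mu_d]$ only emerges after taking $\LOG$ of the full partition function.

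What is actually needed, and what the paper supplies, is a \emph{relative} refinement of Feit--Fine: Proposition \ref{2dDT} computes the class of $\StComm_n$ relative to $\Sym^n(\AAA{2})$ via the generalized-eigenvalue (support) map $\lambda_n$ as $\EXP_{\cup}$ of $[\AAA{2}\xrightarrow{\Delta}\Sym^n(\AAA{2})]/(\LL-1)$. This is the key nontrivial input; its proof is itself substantial, relying on the wall-crossing identity (Proposition \ref{WCP}), the relative generating-function computation for noncommutative Hilbert schemes from \cite{DR19}, and ordinary dimensional reduction. Since $\Tr(c^d)$ factors through the eigenvalues, one then pushes the relative formula forward along the monoid morphism $k_n\colon\Sym^n(\AAA{2})\to\AAA{1}$, $((x_i,y_i))\mapsto\sum y_i^d$, which commutes with $\EXP$, obtaining
\[
\mathcal{Z}_{\LQ{3},W_d}(T)=\EXP\Big(\sum_{n\geq 1}\big([\AAA{2}\xrightarrow{(y,z)\mapsto nz^d}\AAA{1}]/(\LL-1)\big)T^n\Big),
\]
and comparison with \eqref{MDTdef}, together with the rescaling $[\AAA{1}\xrightarrow{z\mapsto nz^d}\AAA{1}]=[\AAA{1}\xrightarrow{z\mapsto z^d}\AAA{1}]$, gives $\Omega_{\LQ{3},W_d,n}=\LL^{1/2}[\AAA{1}\xrightarrow{z\mapsto z^d}\AAA{1}]$, i.e.\ $\LL^{1/2}(1-[\mu_d])$. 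Without a result of the strength of Proposition \ref{2dDT} (or some other computation of the motivic class of $\Comm_n$ relative to the eigenvalues of $c$), your outline cannot be completed; the "plethystic-log machinery" has nothing concrete to act on. Also note that your sanity check via the cohomological BPS sheaf cannot serve as input here, since in the paper that statement (Theorem \ref{cohdef}) is deduced from the motivic computation, not the other way around.
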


In the case $n=1$ this theorem follows essentially from the definitions.  The case $d=3$ and $n=2$ was proved by Le Bruyn \cite{LLB16}; by hand it is already a heavy computation.

Secondly, using purity again, along with this motivic version of the Cazzaniga--Morrison--Pym--Szendr\H{o}i conjecture, we prove a cohomological refinement of their conjecture at the end of \S \ref{DefWeylCoh}.  
\begin{theorem}\label{cohdef}
Let $\Msp_n(\LQ{3})$ be the coarse moduli space of $n$-dimensional $\LQ{3}$-representations. We identify $\AAA{2}$ with the subspace in $\AAA{3}$ given by the $xy$-plane.  There is an isomorphism in $\MMHM(\Msp_n(\LQ{3}))$
\begin{equation}
\label{CMPSR}
\BPS_{\LQ{3},W_d,n}\cong \Delta_{n,*}\QQ_{\AAA{2}}\otimes \HO_c(\AAA{1},\phim{t^d}\QQ)\otimes\LLL^{-3/2},
\end{equation}
as well as an isomorphism of cohomologically graded monodromic mixed Hodge structures
\begin{equation}
\label{CMPSA}    
\BPSA{\LQ{3},W_d,n}\cong \HO_c(\AAA{1},\phim{t^d}\QQ)\otimes\LLL^{1/2}.
\end{equation}
\end{theorem}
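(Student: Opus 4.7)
The plan is to derive the sheaf-theoretic formula \eqref{CMPSR} by combining Theorem \ref{MainThm} with the purity-based computation of $\BPS_{\LQ{3}, \wt{W}, n}$ recalled in \eqref{3loopBPS}, using the motivic identity of Theorem \ref{motdef} to control the Grothendieck class of $\BPS_{\LQ{3}, W_d, n}$.  Write $\Tr(W_d) = \Tr(A[B,C]) + \Tr(C^d)$ on the representation stack $\mathfrak{M}_n(\LQ{3}) = [\Mat_n^3/\Gl_n]$.  The first summand is linear in the entries of $A$, with coefficients the entries of $[B,C]$ (pulled back from the $(B,C)$-space), and the second is independent of $A$.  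Assigning $\GG_m$-weights $(d-1,0,1)$ to $(A,B,C)$ makes $\Tr(W_d)$ semi-invariant of weight $d$ with non-negative weights on the fibres, so the hypotheses of Theorem \ref{MainThmCor} hold $\Gl_n$-equivariantly with $X$ the $B$-space, $g_0=\Tr(C^d)$, and $g_1,\ldots,g_{n^2}$ the entries of $[B,C]$.

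The reduced vanishing locus is $[\altC_n\times\Mat_n/\Gl_n]$, with $\altC_n=\{(B,C):[B,C]=0\}$ the commuting scheme and a free $A$-direction, and $g^{\red}=\Tr(C^d)|_{\altC_n}$.  Applying Theorem \ref{MainThmCor} equivariantly and then a further round of ordinary dimensional reduction in the $A$-direction on which $g^{\red}$ is constant, one obtains on the coarse moduli space $\Msp_n(\LQ{3})$ an isomorphism
\begin{equation*}
p_!\,\phim{\Tr(W_d)}\,\ICS_{\mathfrak{M}_n}\;\cong\;p_!\,\phim{\Tr(C^d)|_{\altC_n}}\,\ICS_{[\altC_n/\Gl_n]}\otimes\LLL^{-n^2},
\end{equation*}
where $p$ denotes the good moduli map.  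The undeformed analogue (with $g^{\red}=0$) gives $p_!\,\ICS_{[\altC_n/\Gl_n]}$ on the right, and the integrality decomposition combined with purity and \eqref{3loopBPS} isolates the BPS summand as $\Delta_{n,*}\QQ_{\AAA{3}}\otimes\LLL^{-3/2}$, supported on the scalar diagonal $\Delta_n(\AAA{3})$.  Transporting the argument through the deformed dimensional reduction, the extra vanishing-cycle factor restricts to $\phim{nz^d}\QQ_{\AAA{1}}$ on the $C$-scalar line, which is isomorphic to $\phim{t^d}\QQ$ since scaling by $n$ is an automorphism of $\AAA{1}$; hence the BPS summand becomes $\Delta_{n,*}(\QQ_{\AAA{2}}\boxtimes\phim{t^d}\QQ_{\AAA{1}})\otimes\LLL^{-3/2}$.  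Because $\phim{t^d}\QQ_{\AAA{1}}$ is a skyscraper at the origin with stalk $\HO_c(\AAA{1},\phim{t^d}\QQ)$, this agrees with the right-hand side of \eqref{CMPSR}.

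The main obstacle is upgrading the identification from the level of Grothendieck classes to a genuine isomorphism of monodromic mixed Hodge modules; this requires purity of $\BPS_{\LQ{3},W_d,n}$, which should follow from the purity of $\BPS_{\LQ{3},\wt{W},n}$ established in \cite[Sec.5]{Dav16b} together with the deformed dimensional reduction, since $\phim{t^d}\QQ$ is a pure monodromic mixed Hodge structure and purity is preserved by exterior product with such a factor.  Granted purity, Theorem \ref{motdef} fixes the Grothendieck class of $\BPS_{\LQ{3},W_d,n}$ on $\Msp_n$ and the integrality decomposition pins the sheaf down to the form \eqref{CMPSR}.  Finally, \eqref{CMPSA} follows by taking compactly supported hypercohomology of \eqref{CMPSR} and using $\HO_c(\AAA{2},\QQ)=\LLL^2$, so that $\LLL^{2}\otimes\LLL^{-3/2}=\LLL^{1/2}$.
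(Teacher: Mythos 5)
Your overall skeleton (equivariant deformed dimensional reduction with weights $(d-1,0,1)$ on $(A,B,C)$, then purity plus the motivic class from Theorem \ref{motdef} to pin down the sheaf) is the right family of ideas, and the reduction step itself is set up correctly, but two essential ingredients are missing and the step you call ``transporting the argument through the deformed dimensional reduction'' does not work as stated. Theorem \ref{MainThm} (or Corollary \ref{DDRstack}) identifies $\pi_!\phim{g}\pi^*$ of sheaves \emph{pulled back from the base} — in effect the total relative vanishing-cycle cohomology of the stack $\Mst_n(\LQ{3})$ — with the corresponding object on the commuting locus; it says nothing directly about the BPS summand $\phim{\WW_{d,n}}\ICS_{\Msp_n(\LQ{3})}(\QQ)$, which sits inside that total cohomology only through the integrality $\Sym$-decomposition. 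Extracting the deformed BPS sheaf from the undeformed one in this way is precisely the sheaf-level compatibility \eqref{DDRBPS}, which the paper explicitly states requires extra (CoHA-theoretic) arguments and is \emph{not} proved there; your proposal treats it as formal. (Incidentally, the displayed isomorphism should carry the constant sheaf on $[\altC_n/\Gl_n]$, not $\ICS_{[\altC_n/\Gl_n]}$: the commuting stack is singular and dimensional reduction produces $i^*\pi^*$ of the constant sheaf.)

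The second gap is the purity claim: saying that purity of $\BPS_{\LQ{3},W_d,n}$ ``follows from purity of $\BPS_{\LQ{3},\wt{W},n}$ because purity is preserved by exterior product with $\phim{t^d}\QQ$'' presupposes the very product decomposition \eqref{CMPSR} you are trying to prove, and vanishing cycle functors do not preserve purity in general. The paper instead first proves a support-and-constancy statement (Lemmas \ref{prefac}, \ref{half_support}, \ref{supplemm}): using the Jacobi relations of $W_d$ one shows $\rho(c)$ is nilpotent and the generalized $(a,b)$-eigenvalues cannot be separated, so $\BPS_{\LQ{3},W_d,n}\cong\Delta_{n,*}\QQ_{\AAA{2}}\otimes\mathcal{G}_n\otimes\LLL^{-1}$ for some $\mathcal{G}_n\in\MMHS$; purity (Lemma \ref{purlemm}) then follows because the remaining factor is $\phim{\TTr(W_d)_n}\ICS_{\Msp^0_n}(\QQ)$ supported at a point, where the $\GG_m$-equivariance purity theorem of \cite{DMSS13} applies. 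Only after these two lemmas does ``Grothendieck class (Theorem \ref{CMPSconj}) $+$ purity $\Rightarrow$ isomorphism'' yield \eqref{CMPSR}, and hence \eqref{CMPSA}. Without an argument for the support/constancy of the deformed BPS sheaf, knowing the class of $\BPSA{\LQ{3},W_d,n}$ and purity of absolute cohomology cannot pin down the mixed Hodge \emph{module} on $\Msp_n(\LQ{3})$. If you want to use cohomological deformed dimensional reduction more directly, the paper's route is Theorem \ref{DGT_compare} together with Corollary \ref{GenCor}, which recovers the absolute statement \eqref{CMPSA} from purity of the full $\Sym$ and semisimplicity of pure objects, but even there the sheaf-level statement \eqref{CMPSR} still rests on Lemma \ref{supplemm}.
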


The second statement follows from the first, since in general $\BPSA{Q,W,\gamma}$ is defined\footnote{Note that this is the dual of the definition of BPS cohomology from \cite[Thm.A]{DaMe15b}, since we take the compactly supported cohomology of the (Verdier self-dual) BPS monodromic mixed Hodge module instead of the cohomology.} to be the compactly supported hypercohomology of $\BPS_{Q,W,\gamma}$.  

A striking feature of this cohomological version of the Cazzaniga--Morrison--Pym--Szendr\H{o}i conjecture is that it can be restated by saying that there is an isomorphism
\begin{align}
\label{DGT}\nonumber
\BPS_{\LQ{3},W_d,n}\cong&\phim{\TTr(c^d)}\BPS_{\LQ{3},\wt{W},n}\\:=&\phim{\TTr(c^d)}\phim{\TTr(\wt{W})}\ICS_{\Msp_n(\LQ{3})}(\QQ)\otimes \LLL^{-\dim(\Msp_n(\LQ{3}))},
\end{align}
where the second equality comes from the definition of $\BPS_{\LQ{3},\wt{W},n}$.  This observation provided one of the main motivations for suspecting and then proving that some deformation of the dimensional reduction theorem exists, and this approach lies behind our second proof of the cohomological version of their conjecture in \S \ref{ppBPS_sec}, using Theorem \ref{MainThm}.  

\subsection{BPS invariants for preprojective algebras with potentials}
In fact Theorem \ref{cohdef} turns out to follow from a special case of a general theorem on BPS invariants, which is inspired by comparing with \cite{DGT16}.  In \cite{DGT16}, working in finite characteristic, Dobrovolska, Ginzburg, and Travkin obtain formulae for the analogues of characteristic functions of vanishing cycle sheaves on the infinitesimal inertia stacks of the stacks of representations of quivers.  They show that these sums are given by plethystic exponentials of Frobenius traces of cohomology of certain coadjoint orbits, twisted by potentials.  By work of Hausel, Letellier and Villegas \cite{HLV}, it is precisely the cohomologies of these coadjoint orbits that recover DT invariants in the complex setting.  On the other hand, in \cite{HLV} there are no potentials.  This is because their setting is ``dimensionally reduced'', i.e. corresponds to the \textit{target} of isomorphisms such as \eqref{ADR}.  It is thus intriguing that potentials still play a key role in the results of \cite{DGT16}.

We recall here that the preprojective algebra for a quiver $Q$ is defined by
\begin{equation}
    \label{preProjDef}
\Pi_Q\colonequals \CC \ol{Q}/\langle \sum_{a\in Q_1} [a,a^*]\rangle,
\end{equation}
where $\ol{Q}$ is the double quiver associated to $Q$.
The formulae obtained by Dobrovolska--Ginzburg--Travkin in \cite{DGT16} suggest that the vanishing cycle cohomology of potentials on the stack of representations of the preprojective algebra could itself arise as BPS cohomology.  This was one of the motivating suggestions for pursuing deformed dimensional reduction.  Our final theorem makes this precise.  

To state it, we first recall the \textit{tripled} quiver $\wt{Q}$, which is obtained from $\ol{Q}$ by adding a loop $\omega_i$ at every vertex $i\in Q_0$.  The tripled quiver carries the canonical cubic potential 
\[
\wt{W}=\sum_{i\in Q_0}\omega_i\sum_{a\in Q_1}[a,a^*].
\]
There is a forgetful map 
\[\varpi\colon \Msp(\wt{Q})\rightarrow \Msp(\overline{Q})\]
along with an $\AAA{1}$-family of sections 
\[l\colon\Msp(\overline{Q})\times\AAA{1}\rightarrow\Msp(\wt{Q})\]
given by setting the action of all of the $\omega_i$ to be multiplication by $z\in\AAA{1}$.  
Furthermore, by the results of \cite{Dav16b}, we can write \[\BPS_{\wt{Q},\wt{W},\gamma}= l_*(\BPS_{\Pi_Q,\gamma}\boxtimes\QQ_{\AAA{1}})\otimes\LLL^{-1/2}\]
for certain monodromic mixed Hodge modules on $\Msp_{\gamma}(\ol{Q})$ supported on the locus of $\Pi_Q$-representations.
\begin{theorem}
\label{DGT_compare}
Let $Q$ be a finite quiver, let $W'\in \CC\ol{Q}/[\CC\ol{Q},\CC\ol{Q}]$ be a potential, and assume that $\wt{W}+W'$ is quasihomogeneous.  Define
\begin{align*}
\mathcal{G}_{\gamma}:=\HO_c(\Msp_{\gamma}(\ol{Q}),\phim{\Tr(W')}\BPS_{\Pi_Q,\gamma})\otimes\LLL.
\end{align*}
Then there are isomorphisms
\begin{align*}
\Sym\left( \bigoplus_{\gamma\in\NN^{Q_0}\setminus \{0\}} \mathcal{G}_{\gamma}\otimes\HO_c(\pt/\CC^*)\right)\cong&\bigoplus_{\gamma\in\NN^{Q_0}}\HO_c(\Mst_{\gamma}(\Pi_Q),\phim{\TTTr(W')}\QQ_{\Mst_{\gamma}(\Pi_Q)})\otimes \LLL^{\chi_Q(\gamma,\gamma)}\\\cong&
\bigoplus_{\gamma\in\NN^{Q_0}}\HO_c(\Mst_{\gamma}(\wt{Q}),\phim{\TTTr(\wt{W}+W')}\QQ_{\Mst_{\gamma}(\wt{Q})})\otimes \LLL^{\chi_{\wt{Q}}(\gamma,\gamma)/2}.
\end{align*}
\end{theorem}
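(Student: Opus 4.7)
The proof combines three ingredients: (a) cohomological integrality for quivers with potential (of Davison--Meinhardt), applied to the pair $(\wt{Q},\wt{W}+W')$; (b) deformed dimensional reduction (Theorem \ref{MainThm}) at the level of representation varieties of $\wt{Q}$, which yields the second $\cong$ in the statement; and (c) a sheaf-level version of Theorem \ref{MainThm}, used to identify $\BPSA{\wt{Q},\wt{W}+W',\gamma}$ with $\mathcal{G}_\gamma$, which combined with (a) yields the first $\cong$. The assumption that $\wt{W}+W'$ is quasihomogeneous is used in both (b) and (c) to supply the $\GG_m$-action required by Theorem \ref{MainThm}.

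First I would establish the second $\cong$. The projection $\Rep_\gamma(\wt{Q})\to\Rep_\gamma(\ol{Q})$ is a trivial vector bundle of rank $\sum_i\gamma_i^2$, with fibre coordinates the matrix entries of the loops $\omega_i$. By definition of $\wt{W}$, the function $\Tr(\wt{W}+W')$ splits as $\Tr(W')+\sum_i\Tr(\omega_i R_i)$, where $R_i=\sum_{a\in Q_1}[a,a^*]$ is the $i$-th preprojective relation; this is affine linear in the $\omega_i$ with constant-in-$\omega$ part $\Tr(W')$, and the common vanishing locus of the $R_i$ is $\Rep_\gamma(\Pi_Q)$. Applying Theorem \ref{MainThm} and descending to the $\Gl_\gamma$-quotient yields the stated cohomological isomorphism. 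A short computation confirms that the twist discrepancy $\chi_Q(\gamma,\gamma)-\chi_{\wt{Q}}(\gamma,\gamma)/2=\sum_i\gamma_i^2$ agrees with the fibre dimension introduced by deformed dimensional reduction.

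For the first $\cong$, cohomological integrality applied to $(\wt{Q},\wt{W}+W')$ produces
\[
\Sym\!\Bigl(\bigoplus_\gamma \BPSA{\wt{Q},\wt{W}+W',\gamma}\otimes \HO_c(\pt/\CC^*)\Bigr)\cong \bigoplus_\gamma \HO_c\bigl(\Mst_\gamma(\wt{Q}),\phim{\TTTr(\wt{W}+W')}\QQ\bigr)\otimes \LLL^{\chi_{\wt{Q}}(\gamma,\gamma)/2}.
\]
It remains to identify $\BPSA{\wt{Q},\wt{W}+W',\gamma}$ with $\mathcal{G}_\gamma$. Combining Theorem \ref{MainThm} applied at the level of $\BPS$ sheaves with the decomposition $\BPS_{\wt{Q},\wt{W},\gamma}\cong l_*(\BPS_{\Pi_Q,\gamma}\boxtimes \QQ_{\AAA{1}})\otimes\LLL^{-1/2}$ recalled just before the theorem, one expects the analogous identity
\[
\BPS_{\wt{Q},\wt{W}+W',\gamma}\cong l_*\bigl(\phim{\Tr(W')}\BPS_{\Pi_Q,\gamma}\boxtimes \QQ_{\AAA{1}}\bigr)\otimes\LLL^{-1/2}.
\]
Taking compactly supported hypercohomology and absorbing $\HO_c(\AAA{1},\QQ)=\LLL$ into the twist then produces the sought identification of $\BPSA{\wt{Q},\wt{W}+W',\gamma}$ with $\mathcal{G}_\gamma$.

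The principal obstacle is the sheaf-level application of Theorem \ref{MainThm} in this last step: the natural transformation \eqref{DRI} must be evaluated on the $\BPS$ mixed Hodge module, not merely on the constant sheaf used in the cohomological corollary. This requires the full functorial statement of Theorem \ref{MainThm} together with a clean description of how $\BPS_{\wt{Q},\wt{W}+W',\gamma}$ restricts to $\Msp_\gamma(\Pi_Q)\times\AAA{1}$; the quasihomogeneity of $\wt{W}+W'$ is essential here, ensuring compatibility of the monodromic data on both sides. A secondary bookkeeping difficulty is the reconciliation of the various Tate twists (the $\LLL$ in the definition of $\mathcal{G}_\gamma$, the $\LLL^{-1/2}$ in the $\BPS$-decomposition, the $\HO_c(\AAA{1},\QQ)$ picked up from the fibre, and the Euler-form differences), which should all combine to produce the exact twists in the statement.
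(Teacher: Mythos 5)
Your treatment of the second isomorphism is fine and matches the paper: the paper also obtains it by applying the deformed dimensional reduction to the trace of $\wt{W}+W'$, which is linear in the $\omega_i$ with linear coefficients the preprojective relations, in the equivariant/stacky form (Corollary \ref{DDRstack}), and your twist count $\chi_Q(\gamma,\gamma)-\chi_{\wt{Q}}(\gamma,\gamma)/2=\sum_i\gamma_i^2$ is correct. The genuine gap is in your first isomorphism: you invoke cohomological integrality for the \emph{deformed} pair $(\wt{Q},\wt{W}+W')$ and then need the sheaf-level identity
\[
\BPS_{\wt{Q},\wt{W}+W',\gamma}\cong l_*\bigl(\phim{\TTr(W')}\BPS_{\Pi_Q,\gamma}\boxtimes \QQ_{\AAA{1}}\bigr)\otimes\LLL^{-1/2},
\]
which you justify only by ``one expects.'' This does not follow from Theorem \ref{MainThm}: that theorem controls objects of the form $\pi_!\phim{g}\pi^*\mathcal{G}$ for the vector-bundle projection $\pi$, whereas $\BPS_{\wt{Q},\wt{W}+W',\gamma}$ is defined as $\phim{}$ of an intersection complex on the \emph{coarse} moduli space and is extracted from the stack-level cohomology only through the integrality isomorphism; knowing that the two Sym-packages of BPS cohomologies have isomorphic total cohomology does not let you cancel $\Sym$ and identify the primitive pieces, since in this generality there is no purity and the Grothendieck class does not determine the isomorphism type. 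The paper is explicit about this: the identity you need is exactly \eqref{DDRBPS}, which the authors state can be proved only ``with more effort,'' by upgrading deformed dimensional reduction to an isomorphism of cohomological Hall algebras and characterising BPS sheaves as primitive generators, and which they deliberately avoid.

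The paper's actual route for the first isomorphism sidesteps the deformed BPS sheaves altogether. Working with the framed approximations $\XX_\gamma\times_{\Gl_\gamma}\Fr(n,N)$, it (i) identifies $\HO_c(\Mst_\gamma(\Pi_Q),\phim{\TTTr(W')}\QQ)$ with the vanishing-cycle cohomology of $\Tr(W')$ on the approximation of the preprojective locus, (ii) uses that $p_N$ is proper to commute it past $\phim{\TTr(W')}$, (iii) applies the \emph{undeformed} dimensional reduction \cite[Thm.A.1]{DAV} to rewrite the pushforward of $\QQ_{\mathscr{P}}$ as $\pi_{N!}\phim{\TTr(\wt{W})}\QQ_{\mathscr{M}}$, (iv) invokes the relative integrality isomorphism \eqref{CohInt} for the undeformed pair $(\wt{Q},\wt{W})$ together with the known description $\BPS_{\wt{Q},\wt{W},\gamma}\cong l_*(\BPS_{\Pi_Q,\gamma}\boxtimes\QQ_{\AAA{1}})\otimes\LLL^{-1/2}$, and (v) only then applies $\phim{\TTr(W')}$, using its commutation with $\varpi_!$ and with $\Sym$ \cite[Prop.3.11]{DaMe15b}. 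In other words, the deformation by $W'$ is introduced \emph{after} integrality has been applied to the undeformed potential, so no statement about $\BPS_{\wt{Q},\wt{W}+W',\gamma}$ is ever needed. To repair your argument you would either have to follow this order of operations, or actually prove \eqref{DDRBPS}, which is a substantially harder (CoHA-level) statement than anything established in the paper.
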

Consider the special case in which $Q=\LQ{1}$ is the Jordan quiver, so that $\wt{Q}\cong\LQ{3}$. Then using the explicit description of the BPS sheaves \eqref{3loopBPS}, Theorem \ref{DGT_compare} gives rise to \eqref{CMPSA} via Corollary \ref{GenCor}.

\subsection{Categorical dimensional reduction.}
For a regular function on a smooth variety $f\colon X\to\AAA{1}$, Efimov \cite{e} showed that the ($\mathbb{Z}/2\mathbb{Z}$-periodic) vanishing cycle cohomology 
$\HO(X_0,\varphi_f\mathbb{Q})$ is categorified by the category of singularities $\Dsg(X_0)$.
A categorification of the dimensional reduction theorem was proved by Isik \cite{I}.
The statement of Theorem \ref{MainThm} is inspired by work of Orlov \cite{O2} and Hirano \cite{hi} who prove a categorification of Theorem \ref{MainThm} when $\overline{Z}$ is smooth. 

It would be interesting to look for a categorification (or a K-theoretic version) of Theorem \ref{MainThm} without assuming that $\overline{Z}$ is smooth. Such a result will have applications in K-theoretic DT theory \cite{P}, and is to be the subject of future work.

\subsection{Acknowledgements}
The paper emerged from discussions with Victor Ginzburg on the results of \cite{DGT16} and the possibility of a generalization of dimensional reduction; BD is immensely grateful to him for hosting him at the University of Chicago in November 2016, and to Brent Pym for explaining some of the key features of his conjecture with Cazzaniga, Morrison, and Szendr\H{o}i. TP thanks Davesh Maulik for numerous discussions about the results of this paper.

During the writing of the paper, BD was supported by
the starter grant ``Categorified Donaldson-Thomas theory'' No.~759967 of the European Research Council, which also enabled TP to visit Edinburgh.  BD was also supported by a Royal Society 
university research fellowship.

\subsection{Conventions and notations}
All the schemes and stacks considered in this paper are defined over $\CC$. We use the notation $\mathbb{G}_m=\Gl_1$.  As an algebraic variety, we identify $\GG_m$ with the complement of $0$ in $\AAA{1}$.  As a group, we identify it with $\CC^*$, and use $\CC^*$ and $\GG_m$ interchangeably.

All functors are assumed to be derived. We set
\[
\NN=\{m\in\ZZ\colon\medskip m\geq 0\}. 
\]

The motive $\LL$ is defined as $[\AAA{1}]$, and its square root $\LL^{1/2}$ is defined in Equation (\ref{squareroot}).

The notations related to quivers and their moduli of representations are introduced in \S \ref{DTdefs}.  Throughout, we denote by $\LQ{r}$ a quiver with one vertex, labelled $1$, and $r$ loops.

For a complex variety $X$, we denote by $\MMHM(X)$ the category of monodromic mixed Hodge modules on $X$, see \S \ref{mixedHodge} for their definitions. We use the notation $\MMHS=\MMHM(\pt)$ for the category of monodromic mixed Hodge structures. We use $\psi$ and $\varphi$ for nearby and vanishing cycle functors for constructible sheaves, and $\uppsi$ and $\phi$ for the nearby and vanishing cycle functors for mixed Hodge modules, see \S \ref{vancycles} for their definitions. The definitions of the full and half Tate twists (monodromic mixed Hodge structures) $\LLL$ and $\LLL^{1/2}$ are given in \eqref{defLLL} and \eqref{defLLLh}, respectively.

For a complex variety $X$, we denote by $\QQ_X$ the constant constructible sheaf on $X$ with stalks given by $\QQ$.  We denote by the same symbol the natural upgrade of this sheaf to an object in the derived category of mixed Hodge modules.

\section{Motivic deformed dimensional reduction}
\subsection{Background and definitions}
\label{MotBackground}
Let $G$ be an algebraic group.  For $Y$ a $G$-equivariant variety, we denote by $\Khat{G}(\Var/Y)$ the free Abelian group generated by symbols
\begin{equation}
    \label{effective_class}
[X\xrightarrow{f} Y]
\end{equation}
where $f$ is a morphism of $G$-equivariant varieties and $X$ is reduced, with two types of relations:
\begin{enumerate}
    \item 
The cut and paste relations
\[
[X\xrightarrow{f} Y]=[U\xrightarrow{f\lvert_U}Y]+[Z\xrightarrow{f\lvert_Z}Y]
\]
for $U\subset X$ an open subvariety with closed complement $Z$.  
\item
The relation
\[
[V\rightarrow X\xrightarrow{f} Y]=[X\times\AAA{n}\xrightarrow{f\circ \pi_X} Y]
\]
if $V\rightarrow X$ is the projection from the total space of a rank $n$ $G$-equivariant vector bundle.
\end{enumerate}

We call sums of elements as in \eqref{effective_class} \textit{effective}.  General elements in $\Khat{G}(\Var/Y)$ and variants of this ring can be written as $A-B$ where $A$ and $B$ are effective.  

Let $d\in\ZZ_{\geq 1}$, and give $Y\times \AAA{1}$ the $\GG_m$-action $z\cdot (y,t)=(y,z^dt)$, thus defining the group $\Khat{\GG_m,d}(\Var/Y\times \AAA{1})$.

We denote by $\mu_d\subset\GG_m$ the group of $d$th roots of unity.                  We define the groups $\Khat{\GG_m,d}(\Var/Y\times \GG_m)$ and $\Khat{\mu_d}(\Var/Y)$ similarly, giving $Y$ the trivial $\mu_d$-action in the second case.  There is an isomorphism of groups
\begin{align*}
&\Khat{\mu_d}(\Var/Y)\rightarrow \Khat{\GG_m,d}(\Var/Y\times \GG_m)\\
&[X\xrightarrow{f}Y]\mapsto {}-[X\times_{\mu_d}\GG_m\xrightarrow{(x,z)\mapsto (f(x),z^d)}Y\times \GG_m].
\end{align*}
This isomorphism sends a variety with a $\mu_d$-action to a variety over $\GG_m$, locally constant in the \'etale topology, with monodromy given by the $\mu_d$-action.  The $\GG_m$-equivariant inclusion $\GG_m\hookrightarrow \AAA{1}$ induces an inclusion of Abelian groups
\[
\iota_d\colon \Khat{\GG_m,d}(\Var/Y\times \GG_m)\rightarrow\Khat{\GG_m,d}(\Var/Y\times \AA_{\CC}^1)
\]
via composition.  There is an inclusion of groups
\begin{align*}
\nu_d^*\colon &\Khat{}(\Var/Y)\rightarrow \Khat{\GG_m,d}(\Var/Y\times \AA_{\CC}^1)\\
&[X\xrightarrow{f} Y]\mapsto[X\times \AA_{\CC}^1\xrightarrow{f\times\id} Y\times \AA_{\CC}^1]
\end{align*}
where the $\GG_m$-action on the target is defined by $z\cdot (y,z')=(y,z^dz')$.  The group $\Khat{\GG_m,d}(\Var/\AA_{\CC}^1)$ carries a ring structure defined by
\begin{align}
\label{LambdaOps1}
[X_1\xrightarrow{f_1}\AA_{\CC}^1]\cdot [X_2\xrightarrow{f_2}\AA_{\CC}^1]\colonequals  [X_1\times X_2\xrightarrow{+\circ (f_1\times f_2)} \AA_{\CC}^1]
\end{align}
and $\Khat{\GG_m,d}(\Var/Y)$ carries a $\Khat{\GG_m,d}(\Var/\pt)$-module structure defined in the same way.  If $f_1=f_2$, there is an extra $\SSym_2$-action on the right hand side of (\ref{LambdaOps1}): we define
\begin{align}
\label{extBox}
[X\xrightarrow{f_1}\AA_{\CC}^1]^{\boxdot\:n}=[X^n\xrightarrow{+\circ f^{\times n}}\AA_{\CC}^1]\in\Khat{\SSym_n\times\GG_m,d}(\Var/\AA_\CC^1)
\end{align}
where the symmetric group $\SSym_n$ acts trivially on $\AA_{\CC}^1$.  For $n\geq 0$, define operations $\sigma^n$ on effective classes via
\begin{align}
\label{LambdaOps2}
&\sigma^n[X\xrightarrow{f} \AA_{\CC}^1]\colonequals  [\Sym^n(X)\xrightarrow{+\circ \Sym^n(f)}\AA_{\CC}^1]=\pi_{(n)}\left([X\xrightarrow{f}\AA_\CC^1]^{\boxdot\:n}\right)
\end{align}
where we slightly abuse notation and denote by the same symbol the morphisms
\begin{align*}
&+\colon\mathbb{A}^1_{\CC}\times\mathbb{A}^1_{\CC}\rightarrow \AA^{1}_{\CC}\\
&+\colon\Sym^n(\AA_{\CC}^1)\rightarrow\AA_{\CC}^1
\end{align*}
provided by addition.  The group morphism $\pi_{(n)}$ in (\ref{LambdaOps2}) is defined by
\begin{align}
\label{pidef}
\pi_{(n)}\colon \Khat{\SSym_n\times\GG_m,d}(\Var/\AA_\CC^1)&\rightarrow \Khat{\GG_m,d}(\Var/\AA_\CC^1)\\
[X\rightarrow \AAA{1}]&\mapsto[X/\SSym_n\rightarrow \AAA{1}].\nonumber
\end{align}
The operations $\sigma^n$ can be extended uniquely to all classes in $\Khat{\GG_m,d}(\Var/\AA_{\CC}^1)$ via the relation
\begin{equation}
    \label{sigma_extend}
\sigma^n(A+B)=\sum_{i=0}^n\sigma^i(A)\sigma^{n-i}(B).
\end{equation}

For the proof that $\sigma^n$ can be extended in this way see \cite{GZLMH}.  Obviously \eqref{LambdaOps2} and \eqref{sigma_extend} then determine the operations $\sigma^n$ uniquely.

The subgroup $\nu_d^*(\Khat{}(\Var/\pt))\subset \Khat{\GG_m,d}(\Var/\AA_{\CC}^1)$ is a $\lambda$-ideal, and so the quotient
\[
\Khat{\GG_m,d}(\Var/\AA_{\CC}^1)/\nu_d^*\left(\Khat{}(\Var/\pt)\right)\cong\Khat{\mu_d}(\Var/\pt)
\]
acquires the structure of a pre-$\lambda$-ring; for details on $\lambda$- and pre-$\lambda$-rings, see \cite[Sec.3, Rmk.3.4]{DM11}.  For $d'\vert d$, there is a natural inclusion of pre-$\lambda$-rings
\[
\Khat{\mu_{d'}}(\Var/\pt)\hookrightarrow \Khat{\mu_d}(\Var/\pt)
\]
given by the morphism 
\begin{align*}
&\mu_d\rightarrow \mu_{d'}\\
&\zeta\mapsto \zeta^{d/d'}
\end{align*}
and we define by $\Khat{\muhat}(\Var/\pt)$ the pre-$\lambda$-ring obtained as the limit of these inclusions.  In particular, there is an inclusion 
\[
\Khat{}(\Var/\pt)=\Khat{\mu_1}(\Var/\pt)\subset\Khat{\muhat}(\Var/\pt)
\]
of \textit{monodromy-free motives}, which form a sub-pre-$\lambda$-ring.  
\smallbreak
Equivalently, there is an embedding
\begin{align*}
\Khat{\GG_m,d'}(\Var/\AA_{\CC}^1)/\nu_{d'}^*\left(\Khat{}(\Var/\pt)\right)&\hookrightarrow \Khat{\GG_m,d}(\Var/\AA_{\CC}^1)/\nu_d^*\left(\Khat{}(\Var/\pt)\right)\\
[X\xrightarrow{f} \AA_{\CC}^1]&\mapsto [X\xrightarrow{(t\mapsto t^{d/d'})\circ f}\AA_{\CC}^1]
\end{align*}
and we define $\Khat{\mon}(\Var/\pt)$ to be the limit of these embeddings.  We define the isomorphic groups 
\begin{equation}
\label{KTrans}
\Xi\colon \Khat{\muhat}(\Var/Y)\cong\Khat{\mon}(\Var/Y)
\end{equation}
as a limit of quotients in the same way.  As above, there is an embedding
\begin{align*}
\Khat{}(\Var/Y)&\rightarrow \Khat{\mon}(\Var/Y)\\
[X\xrightarrow{f} Y]&\mapsto [X\xrightarrow{f\times 0} Y\times\AA_\CC^1]
\end{align*}
and we will generally abuse notation and consider elements $[X]$ and $[X\xrightarrow{f}Y]$ as elements of $\Khat{\mon}(\Var/\pt)$ and $\Khat{\mon}(\Var/Y)$, respectively, via these embeddings.

Given a morphism $h\colon Y\rightarrow Y'$ of varieties, we define the operations 
\[
h_!\colon\Khat{\mon}(\Var/Y)\rightarrow \Khat{\mon}(\Var/Y')
\]
and 
\[
h^*\colon\Khat{\mon}(\Var/Y')\rightarrow \Khat{\mon}(\Var/Y)
\]
via composition and fiber product, respectively.  We define \[\int\colon\Khat{\mon}(\Var/Y)\rightarrow\Khat{\mon}(\Var/\pt)\] by the formula 
$
\int\colonequals  (Y\rightarrow \pt)_!.
$

For $Y'\subset Y$, we define $(Y'\cap \bullet)\colonequals  (Y'\hookrightarrow Y)_!(Y'\hookrightarrow Y)^*$.  Given varieties $Y_1$ and $Y_2$, we define an external tensor product
\begin{align*}
    \boxtimes\colon &\Khat{\mon}(\Var/Y_1)\times \Khat{\mon}(\Var/ Y_2)\to \Khat{\mon}(\Var/Y_1\times Y_2)\\
    &\left([X_1\xrightarrow{f_1}Y_1\times\AAA{1}],[X_2\xrightarrow{f_2}Y_2\times\AAA{1}]\right)\mapsto [X_1\times X_2\xrightarrow{p}Y_1\times Y_2\times\AAA{1}]
\end{align*}
where $p=(\Id_{Y_1\times Y_2}\times +)\circ(f_1\times f_2)$.

The element $\LL\colonequals  [\AAA{1}]\in\Khat{}(\Var/\pt)$ has a square root
\begin{equation}\label{squareroot}
\LL^{1/2}=[\AA_{\CC}^1\xrightarrow{t\mapsto t^2}\AA_{\CC}^1]
\end{equation}
in $\Khat{\mon}(\Var/\pt)$, and we define the localized $\Khat{}(\Var/\pt)$-module
\[
\MMM{Y}\colonequals  \Khat{\mon}(\Var/Y)[\LL^{-1/2},(1-\LL^d)^{-1}\lvert\, d>0].
\]
We denote by $\MM_Y\subset \MMM{Y}$ the submodule
\begin{align}
\label{m_free_def}
\Khat{}(\Var/Y)[\LL^{-1/2},(1-\LL^d)^{-1}\lvert\, d>0].
\end{align}
We set $\MMM{}=\MMM{\pt}$.  

Note that $\sigma^2(\LL^{1/2})=0$, $\sigma^2(-\LL^{1/2})=\LL$, and more generally \[\sigma^m((-\LL^{1/2})^n)=(-\LL^{1/2})^{mn}.\]  By the results of \cite{DM11}, the operations (\ref{LambdaOps1}, \ref{LambdaOps2}) define a pre-$\lambda$-ring structure on the limit of quotients $\Khat{\mon}(\Var/\pt)$ which extends uniquely to $\MM$.  

Let $\Khat{}(\Sta/Y)$ be the group defined by symbols \eqref{effective_class}, where now $X$ is a finite type Artin stack with geometric affine stabilizers, and with relations defined as before.  One can show that $[\BGl_n]=[\Gl_n]^{-1}$ inside $\Khat{}(\Sta/\pt)$ and so there is a morphism 
\[
\MMM{Y}\rightarrow \Khat{\mon}(\Sta/Y)[\LL^{-1/2}]
\]
which is moreover an isomorphism \cite[Thm.1.2]{Ek09}.  In particular, we will be able to consider global quotient stacks $X/G$ over $Y$ as elements of $\MM_{Y}$.  Moreover, if $G$ is \textit{special} in the sense that \'etale locally trivial $G$ bundles are Zariski locally trivial, then we have
\[
[X/G\xrightarrow{f} Y]=[X\xrightarrow{f\circ p}Y]\cdot [G]^{-1}\in \MM_Y
\]
where $p\colon X\rightarrow X/G$ is the quotient map.

We define the plethystic exponential
\begin{align*}
\EXP\colon&\MMM{}\llb T_1,\ldots,T_n\rrb_+\rightarrow \MMM{}\llb T_1,\ldots,T_n\rrb\\
& \alpha\mapsto \sum_{j\geq 0}\sigma^j(\alpha)
\end{align*}
where $\MMM{}\llb T_1,\ldots,T_n\rrb_+\subset \MMM{}\llb T_1,\ldots,T_n\rrb$ is the ideal generated by $(T_1,\ldots,T_n)$.  This morphism is an isomorphism of groups onto its image $1+\MMM{}\llb T_1,\ldots,T_n\rrb_+$ (with group structure given by multiplication), with inverse denoted
\[
\LOG\colon 1+\MMM{}\llb T_1,\ldots,T_n\rrb _+\xrightarrow{\cong} \MMM{}\llb T_1,\ldots,T_n\rrb _+.
\]

More generally, let the scheme $Y=\coprod_{\gamma\in \NN^n} Y_{\gamma}$ be an infinite disjoint union of varieties.  
Then we define
\[
\MMM{Y}=\prod_{\gamma\in \NN^n}\MMM{Y_{\gamma}}.
\]
We frequently abuse notation by denoting 
\begin{align*}
\left([Z_{\gamma}\xrightarrow{f_{\gamma}}Y_{\gamma}\times\AAA{1}]\right)_{\gamma\in\NN^n}=&\left[\coprod_{\gamma\in \NN^n} Z_{\gamma}\xrightarrow{\coprod_{\gamma\in \NN^n}f_{\gamma}}Y_{\gamma}\times\AAA{1}\right]\\
=&\sum_{\gamma\in\NN^n}\left[Z_{\gamma}\xrightarrow{f_{\gamma}}Y_{\gamma}\times\AAA{1}\right].
\end{align*}

For instance, we can make the set $\NN^{n}$ into a scheme with an isolated closed point for every $\gamma\in\NN^n$; then there is a natural isomorphism
\[
\tau\colon\MMM{\NN^n}\cong\MMM{}\llb T_1,\ldots,T_n\rrb
\]
sending 
\[
\alpha\mapsto\sum_{\gamma\in \NN^n}(\{\gamma\}\hookrightarrow \NN^n)^*\alpha T^{\gamma}.
\]

Now assume that $Y$ carries a finite type monoid map $\mu\colon Y\times Y\rightarrow Y$ such that $\mu(Y_{\gamma}\times Y_{\gamma'})\subset Y_{\gamma+\gamma'}$.  We define a product on $\Khat{\mon}(\Var/Y)$:
\[
\left[X_1\xrightarrow{f_1}Y\times\AA_{\CC}^1\right]\cdot_{\mu} \left[X_2\xrightarrow{f_2}Y\times\AA_{\CC}^1\right]\colonequals \left[X_1\times X_2\xrightarrow{(\mu\times +)\circ(f_1\times f_2)}Y\times \AA_{\CC}^1\right]
\]
extending by linearity to give a product on $\MMM{Y}$.  This product is commutative if $\mu$ is.  Likewise, if $\mu$ is commutative, we define operations $\sigma_{\mu}^n$ on effective classes via
\[
\sigma_{\mu}^n\left[X\xrightarrow{f} Y\times \AA_{\CC}^1\right]\colonequals  \left[\Sym^n(X)\xrightarrow{(\mu\times +)\circ \Sym^n(f)}Y\times\AA_{\CC}^1\right]
\]
and extend to all classes as in \eqref{sigma_extend}, thus defining operations $\sigma^n$ on $\MMM{Y}$.  Set
\[
Y_+\colonequals \coprod_{\gamma\in\NN^n\setminus 0}Y_{\gamma}.
\]

We define
\begin{align*}
    \EXP_{\mu}\colon &\MMM{ Y_+}\rightarrow\MMM{Y}\\
    &\alpha\mapsto\sum_{j\geq 0}\sigma_j(\alpha).
\end{align*}
In the case in which $Y=\NN^n$, given the finite type commutative monoid structure arising from addition, this recovers the previous definition of the plethystic exponential via the natural isomorphism $\tau$.

For example, let $P$ be a variety.  Then we consider the configuration space of unordered points on $P$ \[\Sym(P)=\coprod_{i\in\NN}\Sym^i(P),\] where $\Sym^0(P)=\pt$.  There is a union map 
\[
\cup\colon\Sym^i(P)\times\Sym^j(P)\rightarrow \Sym^{i+j}(P)
\]
making $\Sym(P)$ into a commutative monoid.  By the above definitions, there is a plethystic exponential
\[
\EXP_{\cup}\colon \MMM{\Sym(P)_+}\rightarrow \MMM{\Sym(P)}
\]
along with an inverse isomorphism of groups from the image
\[
\LOG_{\cup}\colon 1+\MMM{\Sym(P)_+}\xrightarrow{\cong}\MMM{\Sym(P)_+}.
\]

\subsection{The motivic version of the main theorem}
Let $g\in \Gamma(Y)$ be a regular function on a smooth variety $Y$, and let $Y_0$ be the reduced zero locus of $g$.  In \cite{DL01} Denef and Loeser define the motivic vanishing cycle $[\phi_g]\in\Khat{\muhat}(\Var/Y_0)$; we denote by $[\phi^{\mon}_g]\in\Khat{\mon}(\Var/Y_0)$ its image under the isomorphism (\ref{KTrans}).  There is not a consensus in the literature regarding the normalizing factor for vanishing cycles;  we pick the normalization so that, if $g=0$, 
\begin{equation}
    \label{norm_fac}
[\phi^{\mon}_g]=\LL^{-\dim Y/2}[Y\xrightarrow{\id_Y}Y].
\end{equation}

Let $X$ and $Z$ be varieties, and assume that $Z\subset Y_0\cap \crit(g)$ for some fixed function $g\in\Gamma(Y)$. 
Let $f\colon Z\rightarrow X$ be a morphism of varieties, we write
\[
[Z\xrightarrow{f}X]_{\vir}\colonequals f_!(Z\hookrightarrow Y_0)^* [\phi^{\mon}_g]\in\Khat{\mon}(\Var/X).
\]

Let $Y$ be a $G$-equivariant variety, for $G$ a special algebraic group, let $Z\subset Y$ be a $G$-equivariant subvariety, let $X$ be a variety, and let $\overline{f}\colon Z\rightarrow X$ be a $G$-invariant morphism inducing a morphism of stacks \[f\colon Z/G\rightarrow X.\]
If $\overline{g}\in \Gamma(Y)^G$ is a $G$-invariant function with $Z\subset \crit(\ol{g})\cap Y_0$, we extend the above definition by setting
\[
[Z/G\xrightarrow{f} X]_{\vir}\colonequals  \overline{f}_!(Z\hookrightarrow Y_0)^*[\phi^{\mon}_{\overline{g}}]\,\LL^{\dim(G)/2}/[G].
\]

In general, motivic vanishing cycles can be hard to calculate, but things simplify in the $\GG_m$-equivariant setting thanks to a theorem of Nicaise and Payne.  
\begin{theorem}\cite[Thm.4.1.1+Prop.5.1.5]{NP17}
\label{NPThm}
Let $\overline{X}=X\times\AA_{\CC}^n$ be a $\GG_m$-equivariant variety with action 
\[
z\cdot (x,z_1,\ldots,z_n)=(x,z^{d_1}z_1,\ldots,z^{d_n}z_n)
\]
such that all the $d_i$ are non-negative.  Assume furthermore that $g\colon \overline{X}\rightarrow\AA_{\CC}^1$ is $\GG_m$-equivariant with weight $d>0$ action on the target.  Then there is an equality
\[
\pi_{X,!}[\phi^{\mon}_g]=\LL^{-\frac{\dim(\overline{X})}{2}}[\overline{X}\xrightarrow{\pi_X\times g}X\times\AAA{1}]\in\Khat{\mon}(\Var/X).
\]
Consequently, if $S\subset X$ is a subvariety and $\overline{S}=\pi_X^{-1}(S)$, there is an equality
\[
\int\left(\overline{S}\cap [\phi^{\mon}_g]\right)=\LL^{-\frac{\dim(\overline{X})}{2}}[\overline{S}\xrightarrow{g\lvert_{\overline{S}}} \AAA{1}]\in\Khat{\mon}(\Var/\pt).
\]
\end{theorem}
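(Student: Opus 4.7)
The plan is to reduce the theorem to the Nicaise--Payne identification of the Denef--Loeser motivic nearby fiber with the naive one in the $\GG_m$-equivariant setting, then derive the vanishing cycle formula by cut-and-paste and verify the Tate twist from the authors' normalization \eqref{norm_fac}.

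First I would recall the Denef--Loeser motivic nearby cycle in its computationally useful form: choosing a $\GG_m$-equivariant embedded resolution $\sigma\colon \wt{Y}\to\ol{X}$ of $g^{-1}(0)$, with normal crossings zero divisor $\sum_{i\in J} N_i D_i$, one has
\[
[\psi^{\mon}_g]=\sum_{\emptyset\ne I\subseteq J}(1-\LL)^{|I|-1}[\wt{D}_I^\circ]\in\Khat{\mon}(\Var/\ol{X}_0),
\]
with each stratum carrying a natural $\mu_{N_I}$-cover coming from the local equations of $g\circ\sigma$. The naive candidate is simply $[\ol{X}\xrightarrow{g}\AAA{1}]$ restricted over $\GG_m$, viewed as a $\mu_d$-variety via the restricted weight-$d$ $\GG_m$-action. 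The Nicaise--Payne identification says these classes agree: the $\GG_m$-orbits through points of $g^{-1}(1)$ sweep out curves mapping as $z\mapsto z^d$ onto $\AAA{1}$, producing an equivariant deformation which collapses the resolution sum, so that all $(1-\LL)^{|I|-1}$ contributions with $|I|\ge 2$ cancel after pushing forward.

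From there the theorem follows by a cut-and-paste over $X\times\AAA{1}=X\times\{0\}\sqcup X\times\GG_m$: over $X\times\{0\}$ the contribution is $[g^{-1}(0)\to X]$ (where $[\phim{g}]$ restricts to the class predicted by \eqref{norm_fac}), and over $X\times\GG_m$ the contribution is $[g^{-1}(\GG_m)\to X\times\GG_m]$ with its $\mu_d$-monodromy from the nearby fiber identification. These assemble precisely into $[\ol{X}\xrightarrow{\pi_X\times g} X\times\AAA{1}]$, scaled by the Tate twist $\LL^{-\dim\ol{X}/2}$ which tracks the chosen normalization. The ``consequently'' statement is then immediate by base change along $S\hookrightarrow X$ followed by integration along $\ol{S}\to \pt$.

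The hard part will be the Nicaise--Payne cancellation itself: the Grothendieck-ring bookkeeping, the normalization match, and the base change are essentially formal, but the identification of the Denef--Loeser class with the naive $\mu_d$-equivariant fiber is a genuine geometric input. The cleanest route is therefore to cite \cite[Thm.~4.1.1+Prop.~5.1.5]{NP17} directly. A self-contained reproof would require working with a $\GG_m$-equivariant log resolution and analyzing local monomial models, using the positive-weight condition to construct explicit arc-space isomorphisms that witness the cancellation --- considerably more involved than the rest of the argument.
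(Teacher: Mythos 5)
The paper gives no proof of this statement at all---it is quoted directly from Nicaise--Payne \cite[Thm.4.1.1+Prop.5.1.5]{NP17}---and your proposal ultimately takes the same route: you defer the genuine geometric input (the identification of the Denef--Loeser nearby cycle with the naive $\mu_d$-equivariant fiber $g^{-1}(1)$) to that citation, and the remaining steps (the cut-and-paste of $[\overline{X}\xrightarrow{\pi_X\times g}X\times\AAA{1}]$ over $X\times\{0\}$ and $X\times\GG_m$, matching the normalization \eqref{norm_fac}, and base change to $S$) are formal, as you say. The only caveat is that your heuristic for why the $(1-\LL)^{\lvert I\rvert-1}$ resolution terms cancel is not how Nicaise--Payne actually argue (they prove a tropical motivic Fubini theorem via the Hrushovski--Kazhdan motivic volume), but since you explicitly cite them for precisely that step, this does not affect correctness.
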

The motivic incarnation of our main theorem is a consequence of the Nicaise--Payne theorem.
\begin{theorem}
\label{MotDDRpre}
Let $g\in\Gamma(X\times\AA_{\CC}^n)$ satisfy the conditions of Theorem \ref{NPThm}, and assume moreover that there is a decomposition $\AA^n_{\CC}=\AA^m_\CC\times\AA^{n-m}_{\CC}$ such that we can write
\[
g=g_0+\sum_{j=1}^mg_jt_j.
\]
with $g_0,\ldots,g_m$ pulled back from $X\times\AA_\CC^{n-m}$, and $t_1,\ldots,t_m$ coordinates for $\AA_\CC^m$.  Let $\overline{Z}=Z(g_1,\ldots,g_m)$ be the vanishing locus of the functions $g_1,\ldots,g_m$, and let $g^{\red}$ be the restriction of $g_0$ to $\overline{Z}$.  Then there is an equality
\[
\pi_{X,!}[\phi^{\mon}_g]=\LL^{-\frac{\dim(\overline{X})}{2}}[\overline{Z}\xrightarrow{\pi_X\times g^{\red}} X\times\AA_\CC^1]\in \Khat{\mon}(\Var/X).
\]
In particular, if $S\subset X$ is a subvariety and $\overline{S}=\pi_X^{-1}(S)$, there is an equality
\[
\int\left(\overline{S}\cap [\phi^{\mon}_g]\right)=\LL^{-\frac{\dim(\overline{X})}{2}}[\overline{S}\cap\overline{Z}\xrightarrow{g^{\red}\lvert_{\overline{S}\cap\overline{Z}}} \AAA{1}]\in\Khat{\mon}(\Var/\pt).
\]
\end{theorem}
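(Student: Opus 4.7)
The plan is to deduce Theorem \ref{MotDDRpre} from the Nicaise--Payne theorem (Theorem \ref{NPThm}) by computing the ``naive'' motivic class $[\ol{X}\xrightarrow{\pi_X\times g} X\times\AA^1_\CC]$ directly via cut-and-paste plus a local change of coordinates.

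First, I would check that the hypotheses of Theorem \ref{NPThm} are met in our setup: $\ol{X}=X\times\AA^n_\CC$ with $\GG_m$ acting trivially on $X$ and with non-negative weights on $\AA^n_\CC$, and $g$ is $\GG_m$-equivariant of positive weight $d$ on the target. Applying Theorem \ref{NPThm} gives
\[
\pi_{X,!}[\phi^{\mon}_g] = \LL^{-\dim(\ol X)/2}\bigl[\ol X\xrightarrow{\pi_X\times g} X\times\AA^1_\CC\bigr]\in\Khat{\mon}(\Var/X),
\]
so it suffices to prove the motivic identity
\[
\bigl[\ol X\xrightarrow{\pi_X\times g} X\times\AA^1_\CC\bigr]=\bigl[\ol Z\xrightarrow{\pi_X\times g^{\red}} X\times\AA^1_\CC\bigr]\quad\text{in }\Khat{\mon}(\Var/X).
\]

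Next, I would apply additivity. Writing $\ol X=\ol Z\sqcup U$ with $U=\ol X\setminus\ol Z$ open, and noting that $g_1|_{\ol Z}=\cdots=g_m|_{\ol Z}=0$, we have $g|_{\ol Z}=g_0|_{\ol Z}=g^{\red}$, so the $\ol Z$-contribution is precisely the right-hand side of the desired identity. Thus the theorem reduces to showing that
\[
\bigl[U\xrightarrow{\pi_X\times g|_U} X\times\AA^1_\CC\bigr]=0\quad\text{in }\Khat{\mon}(\Var/X).
\]

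The main step, and the principal technical hurdle, is verifying this vanishing. I would stratify $U$ using the open cover $\{V_j=\{g_j\neq 0\}\}_{j=1}^m$ of $X\times\AA_\CC^{n-m}\setminus Z(g_1,\ldots,g_m)$ and work one stratum at a time. Over $V_j$, where $g_j$ is invertible, the change of variables replacing $t_j$ by $s\colonequals g_0+\sum_k g_k t_k$ is a $\GG_m$-equivariant isomorphism $V_j\times\AA^m_\CC\cong V_j\times\AA^{m-1}_\CC\times\AA^1_\CC$ (with $\GG_m$ acting on the new $s$-factor by weight $d$), under which the structure map $\pi_X\times g$ becomes the projection $(v,\cdot,s)\mapsto(\pi_X(v),s)$. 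Applying the equivariant vector bundle relation to contract the intermediate $\AA^{m-1}_\CC$ factor then identifies the class of this stratum in $\Khat{\GG_m,d}(\Var/X\times\AA^1_\CC)$ with an element of the subgroup $\nu_d^*(\Khat{}(\Var/X))$ quotiented out in the definition of $\Khat{\mon}(\Var/X)$. Inclusion--exclusion over the $V_j$'s then assembles these local vanishings into a global one. The delicate point is ensuring that the change of coordinates and the vector bundle relation are invoked in a $\GG_m$-equivariant manner — precisely, one uses that the vector bundle relation in the equivariant Grothendieck ring identifies classes of equivariant trivial bundles regardless of the fiberwise weights, which is what allows the ``extra'' $\GG_m$-weights produced by the substitution $t_j\leftrightarrow s$ to be absorbed.

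Finally, the ``in particular'' statement of the theorem follows by base change: pulling back the established identity in $\Khat{\mon}(\Var/X)$ along $S\hookrightarrow X$ and then integrating to a point yields
\[
\int\bigl(\ol S\cap[\phi^{\mon}_g]\bigr)=\LL^{-\dim(\ol X)/2}\bigl[\ol S\cap\ol Z\xrightarrow{g^{\red}|_{\ol S\cap\ol Z}}\AA^1_\CC\bigr]\in\Khat{\mon}(\Var/\pt),
\]
as required.
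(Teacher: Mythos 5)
Your proposal is correct and follows essentially the same route as the paper: apply Theorem \ref{NPThm}, then kill the contribution of $\ol{X}\setminus\ol{Z}$ via the change of variables $t_j\mapsto g$ wherever $g_j$ is invertible, so that the resulting class, being constant along the extra $\AAA{1}$-direction, dies in $\Khat{\mon}(\Var/X)$. The only cosmetic difference is that you use the open cover $\{g_j\neq 0\}$ and inclusion--exclusion over $X$, whereas the paper stratifies $X\times\AA^{n-m}_{\CC}$ into locally closed strata indexed by the first non-vanishing $g_j$ and works stratum by stratum before pushing forward to $X$.
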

\begin{proof}
Set $T=X\times\AA^{n-m}_\CC$, let $\pi_T\colon \overline{X}\rightarrow T$ be the projection, and for $0\leq j\leq m$ let $h_j\in\Gamma(T)$ satisfy 
\[
h_j\pi_{T}=g_j.
\]
We stratify $T$ by setting 
\[
T_{k}=\left\{x\in T\colon\medskip h_{l}(x)=0\textrm{ for }1\leq l<k,\textrm{ and } h_{k}(x) \neq 0\right\}
\]
for $k=1,\ldots,m$ and $T_{m+1}=Z(h_1,\ldots,h_m)$.  We denote by $\overline{X}_k$ the preimage of $T_k$ under the natural projection from $\overline{X}$.  Then $\overline{Z}=\overline{X}_{m+1}$.  By Theorem (\ref{NPThm}) we have
\begin{align*}
\pi_{X,*}[\phi^{\mon}_g]=&\LL^{-\frac{\dim(\overline{X})}{2}}(\overline{X}\rightarrow X)_![\overline{X}\xrightarrow{\id_{\overline{X}}\times g}\overline{X}\times \AA_{\CC}^1]\\
=&\LL^{-\frac{\dim(\overline{X})}{2}}(T\rightarrow X)_!(\overline{X}\rightarrow T)_![\overline{X}\xrightarrow{\id_{\overline{X}}\times g}\overline{X}\times \AA_{\CC}^1]\\
=&\sum_{k=1}^{m+1}\LL^{-\frac{\dim(\overline{X})}{2}}(T_k\rightarrow X)_!(T_k\rightarrow T)^*(\overline{X}\rightarrow T)_![\overline{X}\xrightarrow{\id_{\overline{X}}\times g}\overline{X}\times \AA_{\CC}^1]\\
=&\sum_{k=1}^{m+1}\LL^{-\frac{\dim(\overline{X})}{2}}(T_k\rightarrow X)_!\left[\overline{X}_k\xrightarrow{\pi_{T_k}\times g\lvert_{\overline{X}_k}} T_k\times\AA_{\CC}^1\right]
\end{align*}
and the theorem follows from the claim that all the terms in the final sum are zero, apart from the $k={m+1}$ term.  Let $1\leq k\leq m$ and consider the commutative diagram
\[
\xymatrix{
T_k\times\AA^{m}_{\CC}\ar[rr]^-{\zeta}\ar[dr]_{\pi_{T_k}\times g}&& T_k\times\AA^{m-1}_{\CC}\times\AAA{1}\ar[dl]^{\pi_{T_k\times\AAA{1}}}
\\
&T_k\times\AA^{1}_{\CC}
}
\]
where the map
\[
\zeta(v,t_1,\ldots,t_m)=(v,t_1,\ldots,\hat{t}_k,\ldots,t_m,g(v,t_1,\ldots,t_m))
\]
has an inverse given by
\[
\zeta^{-1}(v,t_1,\ldots,\hat{t}_k,\ldots,t_{m},s)=(v,t_1,\ldots,t_{k-1},(s-g_0(v)-\sum_{j\neq k}t_jg_j(v))/g_k(v),t_{k+1},\ldots,t_m).
\]
It follows by definition that $[T_k\times\AA_{\CC}^{m-1}\times\AA_{\CC}^1\xrightarrow{\pi_{T_k\times\AA_{\CC}^1}}T_k\times\AA_{\CC}^1]$ is zero in $\Khat{\mon}(\Var/T_k)$, and so we deduce that 
\[
[\overline{X}_k=T_k\times\AA^{m}_{\CC}\xrightarrow{\pi_{T_k}\times g\lvert_{\ol{X}_k}}T_k\times\AA^{1}_{\CC}]=0
\]
for $k=1,\ldots, m$, and this implies the desired equality.
\end{proof}

\begin{theorem}
\label{MotDDR}
Following the notation of Theorem \ref{MainThm}, let $g\in\Gamma(\overline{X})^{\chi}$ be a $\GG_m$-semi-invariant function for a positive character $\chi$.  Then there is an equality

\[
\pi_{X,!}[\phi^{\mon}_g]=\LL^{-\frac{\dim(\overline{X})}{2}}[\overline{Z}\xrightarrow{\pi_X\times g^{\red}} X\times\AA_\CC^1]\in \Khat{\mon}(\Var/X).
\]
and, for $\ol{S}$ defined as before, there is an equality
\[
\int\left(\overline{S}\cap [\phi^{\mon}_g]\right)=\LL^{-\frac{\dim(\overline{X})}{2}}[\overline{S}\cap\overline{Z}\xrightarrow{g^{\red}\lvert_{\overline{S}\cap\overline{Z}}} \AAA{1}]\in\Khat{\mon}(\Var/\pt).
\]

\end{theorem}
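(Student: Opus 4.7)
The plan is to reduce to the trivial-bundle case already established as Theorem \ref{MotDDRpre}, via a local-to-global argument on the base $X$.  Since $\GG_m$ acts trivially on $X$, each sheaf in the short exact sequence $0 \to \mathscr{V}' \to \mathscr{V} \to \mathscr{V}'' \to 0$ decomposes into a finite direct sum of $\GG_m$-weight eigensheaves, each an ordinary locally free $\OO_X$-module.  First I would choose a finite Zariski open cover $X = \bigcup_\alpha U_\alpha$ on which all these eigensheaves trivialize simultaneously and on which the short exact sequence splits $\GG_m$-equivariantly.  On any such $U_\alpha$, the resulting $\GG_m$-equivariant isomorphism $\ol{X}\lvert_{U_\alpha} \cong U_\alpha \times \AAA{n}$, together with the splitting $\AAA{n} = \AAA{m} \times \AAA{n-m}$ (where $m = \rank\mathscr{V}''$), puts us exactly in the hypotheses of Theorem \ref{MotDDRpre}: the condition $g \in \mathscr{D}_{\mathscr{V}',\leq 1}$ unwinds to $g\lvert_{\ol{X}\lvert_{U_\alpha}} = g_0 + \sum_{j=1}^m g_j t_j$ with $g_0,\ldots,g_m$ pulled back from $U_\alpha \times \AAA{n-m} = \SpecSym(\mathscr{V}'\lvert_{U_\alpha})$; non-negativity of the weights on $\mathscr{V}$ transfers to non-negative weights on each $\AAA{1}$-factor; and semi-invariance of $g$ under the positive character $\chi$ supplies the positive target weight $d$.

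Applying Theorem \ref{MotDDRpre} on each $U_\alpha$ yields
\[
\pi_{U_\alpha,!}[\phim{g\lvert_{\ol{X}\lvert_{U_\alpha}}}] = \LL^{-\dim(\ol{X})/2}\,[\ol{Z}\lvert_{U_\alpha} \xrightarrow{\pi_{U_\alpha} \times g^{\red}} U_\alpha \times \AAA{1}],
\]
where I use that $\dim(\ol{X}\lvert_{U_\alpha}) = \dim(\ol{X})$, valid because $U_\alpha$ is open in the smooth connected variety $X$.  Base change along the open immersion $\iota_\alpha \colon U_\alpha \hookrightarrow X$ identifies the left-hand side with $\iota_\alpha^*\pi_{X,!}[\phim{g}]$, so both sides of the claimed equality in $\Khat{\mon}(\Var/X)$ agree after pulling back to every $U_\alpha$.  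Refining $\{U_\alpha\}$ to a locally closed stratification $X = \bigsqcup_j V_j$ with each $V_j$ contained in some $U_{\alpha(j)}$, the cut-and-paste identity $\beta = \sum_j (V_j \hookrightarrow X)_!(V_j \hookrightarrow X)^*\beta$ valid in $\Khat{\mon}(\Var/X)$ then promotes this stratumwise agreement to the desired global equality.  The second (integrated) equality of the theorem follows by applying $(\ol{S} \cap \bullet)$ and pushing forward to a point, exactly as at the end of the proof of Theorem \ref{MotDDRpre}.

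The only real piece of bookkeeping is the construction of the compatible $\GG_m$-equivariant trivializing cover in the first step; this reduces to trivializing each weight eigensheaf separately, which are ordinary locally free sheaves on $X$ and so can be trivialized Zariski-locally in the standard way.  Beyond this, the argument is formal from the definitions of $\Khat{\mon}(\Var/X)$, base change along open immersions, and cut-and-paste, and I do not anticipate any substantial obstacle.
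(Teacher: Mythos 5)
Your proposal is correct and takes essentially the same approach as the paper: both reduce to Theorem \ref{MotDDRpre} by passing to a finite open cover of $X$ trivializing $\mathscr{V}$ (compatibly with the $\GG_m$-structure and the subsheaf $\mathscr{V}'$) and then reassembling the identity in $\Khat{\mon}(\Var/X)$ by cut-and-paste. The only cosmetic difference is bookkeeping: you refine the cover to a locally closed stratification and sum over strata, while the paper uses inclusion--exclusion over the intersections $U_J$ of the cover.
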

\begin{proof}
Let $\{U_i\}_{i\in I}$ be a finite open cover of $X$ which trivializes the vector bundle $\mathcal{V}$.  For $J\subset I$ a nonempty subset, write
\[
U_J\colonequals  \begin{cases} \bigcap_{j\in J}U_j& \textrm{if }J\textrm{ is non-empty},\\
X&\textrm{otherwise}.\end{cases}
\]
Let $\pi_J$ be the restriction of $\pi$ to $\ol{U}_J\colonequals\pi^{-1}(U_J)$.  Then
\begin{align*}
\pi_{X,!}[\phim{g}]=&\sum_{J\subset I} (-1)^{\lvert J\lvert}\pi_{J,!}[\phim{g}\cap \ol{U}_J]\\
=&\sum_{J\subset I} (-1)^{\lvert J\lvert}\LL^{-\frac{\dim(\overline{X})}{2}}[\overline{Z}\cap \ol{U}_J\xrightarrow{\pi_J\times g^{\red}} X\times\AA_\CC^1]\\
=&\LL^{-\frac{\dim(\overline{X})}{2}}[\overline{Z}\xrightarrow{\pi_X\times g^{\red}} X\times\AA_\CC^1],
\end{align*}
where the second equality follows by Theorem \ref{MotDDRpre}.
\end{proof}

\section{Motivic Donaldson--Thomas invariants for quivers with potential}
In this section we prove the Cazzaniga--Morrison--Pym--Szendr\H{o}i conjecture on motivic Donaldson--Thomas invariants for $\LQ{3}$ with the deformed Weyl potential, after recalling the necessary background and definitions in \S \ref{DTdefs}.
\subsection{Motivic partition functions}
\label{DTdefs}
Let $Q$ be a quiver, by which we mean two sets $Q_1$ and $Q_0$ that we always assume to be finite, along with two maps 
\[
s,t\colon Q_1\rightarrow Q_0.
\]
The sets $Q_0$ and $Q_1$ should be thought of as the set of vertices and arrows, respectively, while the maps $s$ and $t$ take an arrow to its source and target, respectively.  

For simplicity, in this section we will only consider symmetric quivers, i.e. those quivers such that for all pairs of vertices $i,i'$ there are as many arrows from $i$ to $i'$ as from $i'$ to $i$.  

Let $\gamma\in\NN^{Q_0}$ be a dimension vector.  A representation of $Q$ with dimension vector $\gamma$ is given by a set of vector spaces $(V_i)_{i\in Q_0}$ with $\dim(V_i)=\gamma(i)$ and linear maps $\rho_a\colon V_{s(a)}\rightarrow V_{t(a)}$ for each $a\in Q_1$.  We write $\udim(V)=\gamma$.  

We define the affine space
\[
\XX_{\gamma}(Q)\colonequals  \prod_{a\in Q_1}\Hom(V_{s(a)},V_{t(a)}),
\]
which is acted on by the gauge group
\[
\Gl_{\gamma}\colonequals  \prod_{i\in Q_0} \Gl(V_i)
\]
by change of basis.  

Let $\Rep_{\gamma}(Q)$ denote the stack of $\gamma$-dimensional $Q$-representations, which we identify throughout this paper with left $\CC Q$-modules, i.e. $\Rep_{\gamma}(Q)(X)$ for a scheme $X$ is the groupoid of locally free coherent sheaves on $X$ with an action of $Q$ such that the fibers are $\gamma$-dimensional $Q$-representations.

The stack-theoretic quotient 
\[
\Mst_{\gamma}(Q)\colonequals \XX_{\gamma}(Q)/\Gl_{\gamma}
\]
is isomorphic to $\Rep_{\gamma}(Q)$. 
We denote by
\[
\Msp_{\gamma}(Q)\colonequals  \Spec(\Gamma(\XX_{\gamma}(Q))^{\Gl_{\gamma}}),
\]
the affinization of this stack, and consider the canonical map
\[
p_{\gamma}\colon \XX_{\gamma}(Q)/\Gl_{\gamma}\rightarrow \Msp_{\gamma}(Q).
\]
 At the level of geometric points, this is the map that takes a $Q$-representation over a field extension $K\supset \CC$ to its semisimplification. 
\smallbreak
Let 
\[
W\in \CC Q/[\CC Q,\CC Q]_{\mathrm{vect}}
\]
be a linear combination of cyclic paths in $Q$, i.e. a \textit{potential}.  Evaluating $\Tr(W)$ at the representations defined by points of $\XX_{\gamma}(Q)$ provides a $\Gl_{\gamma}$-invariant function $\Tr(W)_{\gamma}$ on $\XX_{\gamma}(Q)$, and so a function $\WWW_{\gamma}$ on $\Mst_{\gamma}(Q)$ pulled back from the induced function $\WW_{\gamma}$ on $\Msp_{\gamma}(Q)$.  We define
\[
\XX_{\gamma}(Q,W)\colonequals \crit\left(\Tr(W)_{\gamma}\right).
\]
Further, define the stack \[\Mst_{\gamma}(Q,W)\colonequals  \XX_{\gamma}(Q,W)/\Gl_{\gamma}.\]
Since we realize $\XX_{\gamma}(Q,W)$ and $\Mst(Q,W)$ as critical loci, we write
\begin{align*}
[\XX_{\gamma}(Q,W)]_{\text{vir}}\colonequals &\int [\phi^{\mon}_{\Tr(W)_{\gamma}}]\in \Khat{\mon}(\text{Var}/\text{pt})\\
[\Mst_{\gamma}(Q,W)]_{\text{vir}}\colonequals &\int [\phi^{\mon}_{\Tr(W)_{\gamma}}]\cdot[\Gl_{\gamma}]^{-1}\cdot \LL^{\gamma\cdot \gamma/2}\in \MMM.  
\end{align*}

For $K$ a field, we define the Jacobi algebra
\begin{align*}
K(Q,W)=K Q/\langle \partial W/\partial a \colon\medskip a\in Q_1\rangle
\end{align*}
where $\partial W/\partial a$ is the noncommutative derivative of $W$ with respect to $a$.  The critical locus of $\WWW_{\gamma}$ and the stack of $\gamma$-dimensional $\CC(Q,W)$-modules are equal as substacks of $\Mst_{\gamma}(Q)$.  

Fix a bijection $Q_0\cong\{1,\ldots,n\}$.  For $\gamma\in\mathbb{N}^{Q_0}$, we write
\[
T^{\gamma}\colonequals  T_1^{\gamma(1)}\cdot\ldots\cdot T_n^{\gamma(n)}.
\]
Then we define the \textit{motivic DT partition function}
\[
\mathcal{Z}_{Q,W}(T)\colonequals \sum_{\gamma\in\mathbb{N}^{Q_0}} [\Mst_{\gamma}(Q,W)]_{\text{vir}}\,T^{\gamma}\in1+\MMM{}\llb T_1,\ldots,T_n\rrb_+
\]
and the \textit{motivic DT invariants} $\Omega_{Q,W,\gamma}$ as in \cite{KSMot} via the formula
\begin{equation}
    \label{MDTdef}
\sum_{0\neq \gamma\in\mathbb{N}^{Q_0}}\Omega_{Q,W,\gamma}(\LL^{1/2}-\LL^{-1/2})^{-1}T^{\gamma}=\LOG(\mathcal{Z}_{Q,W}(T)).
\end{equation}

A priori, the elements $\Omega_{Q,W,\gamma}$ are only defined as elements of $\MMM{}$.  The integrality conjecture states that the elements $\Omega_{Q,W,\gamma}$ are in the image of the natural map from $\Khat{\mon}(\Var/\pt)[\LL^{-1/2}]$, see \cite{COHA, DaMe4} for proofs of variants of this conjecture. The proof of the Cazzaniga--Morrison--Pym--Szendr\H{o}i conjecture below, in particular, implies the integrality conjecture for the quivers with potential that we consider in this paper.
\subsection{Motivic DT invariants for the deformed Weyl potential}
We come now to the main application of Theorem \ref{MotDDR} in this paper.  Throughout this section we fix $Q$ to be the three loop quiver $\LQ{3}$, i.e. we fix $ Q_0=\{1\}$ and $Q_1=\{a,b,c\}$.  Let $Q'\cong\LQ{2}$ be the two loop quiver obtained by removing $a$ from $Q_1$.  Set 
\begin{equation}
\label{WDef}
W=[a,b]c.
\end{equation}
For $d\geq 2$, define
\begin{equation}
\label{WdDef}
W_d=[a,b]c+c^d.
\end{equation}
Let $\GG_m$ act on triples of matrices in $\XX_n(Q)$ via
\[
z\cdot (A,B,C)=(z^{d-1}A,B,zC).
\]
Then $\Tr(W_d)_n$ is a $\GG_m$-equivariant function on $\XX_n(Q)$, of weight $d$, and we can write 
\begin{align*}
\Tr(W_d)_n(\rho)=&\Tr(W)(\rho(a),\rho(b),\rho(c))+\Tr(\rho(c)^d)\\
=&\left(\sum_{(i,j)\in\{1,\ldots,n\}^2}\rho(a)_{ij}[\rho(b),\rho(c)]_{ji}\right)+\Tr(\rho(c)^d).
\end{align*}

In the terminology of Theorem \ref{MotDDR}, let $g_0=\Tr(\rho(c)^d)$ and let $g_1,\ldots,g_{n^2}$ be the functions recording the matrix entries of $[\rho(b),\rho(c)]$; then $Z$, the zero locus of $g_1,\ldots, g_{n^2}$, is the locus on which $\rho(b)$ and $\rho(c)$ commute, which we denote by
\begin{equation}
\label{Comm_def}
\Comm_n\colonequals \{(B,C)\in\Mat_{n\times n}(\CC)^{\times 2}\colon\medskip [B,C]=0\}\subset \Mat_{n\times n}(\CC)^{\times 2}.
\end{equation}

The group $\Gl_n$ acts on $\Comm_n$ by simultaneous conjugation, and we define
\begin{equation}
\label{StComm_def}    
\StComm_n\colonequals \Comm_n/\Gl_n
\end{equation}
to be the stack-theoretic quotient.  This is isomorphic to the stack of $n$-dimensional $\CC[y,z]$-modules.

Theorem \ref{MotDDR} gives the equality in $\Khat{\mon}(\Var/\pt)$
\begin{equation}
\label{nilpComm}    
[\XX_n(Q,W_d)]_{\vir}=\LL^{-\frac{n^2}{2}}\cdot[\Comm_n\xrightarrow{(B,C)\mapsto \Tr(C^d)} \AA_{\CC}^1].
\end{equation}

We define a map \[\lambda_n\colon\Comm_n\rightarrow\Sym^n(\AAA{2})\] sending a pair of commuting matrices $B,C$ to the generalized eigenvalues (with multiplicities) of the two matrices. More precisely, if 
\[\bb{C}^n\cong \oplus_{i\in I}V_{i}\]
is a decomposition of $\mathbb{C}^n$ into subspaces preserved by both $B$ and $C$ for which the generalized eigenvalues are $\alpha_{i,1}$ and $\alpha_{i,2}$, respectively, and such that for all $i\neq j$ either $\alpha_{i,1}\neq \alpha_{j,1}$ or $\alpha_{i,2}\neq \alpha_{j,2}$, then $(A,B)$ is mapped to the $n$-tuple of points in $\AAA{2}$ containing $(\alpha_{i,1},\alpha_{i,2})$ $\dim(V_i)$ times.  The morphism $\lambda$ is $\Gl_n$-invariant, and so induces a map \[\StComm_n\rightarrow \Sym^n(\AAA{2}),\]
which is simply the Hilbert--Chow map taking a length $n$ sheaf on $\AAA{2}$ to its support.

Denote by $Q^{\fram}$ the quiver obtained by adding a vertex $\infty$ to $Q$ and an extra arrow $j$ from $\infty$ to $1$, which we recall is the original vertex of $Q$.  We consider the potential $W$ also as a potential for $Q^{\fr}$.  Consider

\[
\ncHilb_n\subset \XX_{(1,n)}(Q^{\fram})/\Gl_n
\]
the open substack corresponding to $Q^{\fram}$-representations $\rho$ for which the image of $\rho(j)$ generates $\rho$.  This is actually a fine moduli scheme called the noncommutative Hilbert variety \cite{Nori, Rei05}.  

We define the map 
\[
\tilde{\lambda}_n\colon\XX_n(Q,W)\rightarrow \Sym^n(\AAA{3})
\]
the same way as $\lambda_n$, and denote by the same symbol $\tilde{\lambda}_n$ the extensions/restrictions to moduli stacks of $Q^{\fram}$-representations lying in the critical locus of $\Tr(W)$.

The following incarnation of the wall crossing formula is standard, but we will indicate the proof for completeness:
\begin{proposition}
\label{WCP}
There is an equality in $\MMM{\Sym(\AAA{3})}$:
\begin{align*}
&\sum_{n\geq 0}[\Mst_{(1,n)}(Q^{\fram},W)\xrightarrow{\tilde{\lambda}_n} \Sym^n(\AAA{3})]_{\vir}=\\&\sum_{n\geq 0}[\Mst_{n}(Q,W)\xrightarrow{\tilde{\lambda}_n} \Sym^n(\AAA{3})]_{\vir}\cdot\LL^{-n/2} \cdot_{\cup} \sum_{n\geq 0}[\ncHilb_{n}(Q,W)\xrightarrow{\tilde{\lambda}_n} \Sym^n(\AAA{3})]_{\vir}\cdot \frac{\LL^{1/2}}{\LL-1}.
\end{align*}
\end{proposition}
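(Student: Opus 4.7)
I would stratify the framed moduli stack $\Mst_{(1,n)}(Q^{\fram},W)$ by the dimension $k\in\{0,\ldots,n\}$ of the $Q^{\fram}$-subrepresentation $\rho'_v\subset\rho$ generated by the framing vector $v$, and then carry out a standard motivic Hall-algebra style wall-crossing on the resulting fibration. The key structural observation that makes the computation short for this specific pair $(\LQ{3}, W=[a,b]c)$ is that $\Tr(W)$ splits block-diagonally along any filtration by subrepresentations.

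For fixed $k$, the locus $\mathcal{S}_k\subset\Mst_{(1,n)}(Q^{\fram})$ where $\dim(\rho'_v)_1=k$ is locally closed, and the assignment $(\rho,v)\mapsto((\rho'_v,v),\rho/\rho'_v)$ gives a natural morphism $\mathcal{S}_k\to\ncHilb_k\times\Mst_{n-k}(Q)$, since $(\rho'_v,v)$ is by construction an $\ncHilb$-point of length $k$ and $\rho/\rho'_v$ is an unframed $Q$-representation of dimension $n-k$. Its fibers are the extension groupoids $[\Ext^1_{\CC Q}(\rho'',\rho')/\Hom_{\CC Q}(\rho'',\rho')]$, which are affine fibrations of motivic class $\LL^{-\chi_Q(n-k,k)}=\LL^{2k(n-k)}$.

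In a basis adapted to the filtration $\rho'_v\subset\rho$, every loop is block upper-triangular with diagonal blocks $\rho',\rho''$, and since products of upper block-triangular matrices remain upper block-triangular and $W=[a,b]c$ is a trace of such a product, one has the pointwise identity
\[
\Tr(W)_{(1,n)}(\rho)=\Tr(W)_k(\rho')+\Tr(W)_{n-k}(\rho'').
\]
Thus $\Tr(W)$ is constant in the extension variables, and the motivic vanishing cycle $[\phi^{\mon}_{\Tr(W)}]\lvert_{\mathcal{S}_k}$ equals, up to a power of $\LL^{1/2}$ recording the rank of the extension affine bundle, the external product of the vanishing cycles on $\ncHilb_k$ and $\Mst_{n-k}(Q,W)$. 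Compatibility with $\tilde\lambda_n$ is automatic, because the support of $\rho$ in $\Sym^n(\AAA{3})$ is the union of the supports of sub and quotient, which is exactly the monoid law underlying $\cdot_\cup$.

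Summing the stratum-by-stratum identity over $k=0,\ldots,n$ and over $n\geq 0$ produces the claimed wall-crossing formula. The $\LL^{-n/2}$ on the unframed factor and $\LL^{1/2}/(\LL-1)$ on the $\ncHilb$ factor come from combining the $\LL^{2k(n-k)}$ extension-fiber contribution with the identity $[\Gl_{(1,n)}]^{-1}=(\LL-1)^{-1}[\Gl_n]^{-1}$ (the $(\LL-1)^{-1}$ being absorbed into the $\ncHilb$ side, which, as a fine moduli scheme, carries no intrinsic $[\Gl]^{-1}$) and with the change of the half-Tate twist $\LL^{\gamma\cdot\gamma/2}$ on passing between the framed and unframed Euler forms. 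The main obstacle is precisely this bookkeeping of half-Tate twists and $[\Gl]$-factors; the structural content of the argument lies entirely in the stratification together with the block-diagonal vanishing of $\Tr(W)$ on the extension directions, after which the identity reduces to a routine computation in the spirit of the original motivic wall-crossing formulas of \cite{KSMot}.
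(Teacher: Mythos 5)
Your stratification is the same one the paper uses (strata indexed by the dimension of the subrepresentation generated by the framing vector), and the bookkeeping of $[\Gl]$-factors and Tate twists is of the right general shape. However, there is a genuine gap at the central step. You pass from the pointwise identity $\Tr(W)_{(1,n)}(\rho)=\Tr(W)_k(\rho')+\Tr(W)_{n-k}(\rho'')$ on the stratum to the assertion that the restriction $[\phi^{\mon}_{\Tr(W)_{(1,n)}}]\lvert_{\mathcal{S}_k}$ is, up to a Tate twist, the external product of the vanishing cycle classes of the sub and of the quotient. This does not follow: the motivic vanishing cycle class is defined on the ambient smooth space $\XX_{(1,n)}(Q^{\fram})=V_0\times V_1\times V_{-1}$, where $V_{-1}$ collects the strictly lower-triangular blocks of the three loops together with the component of the framing map landing in the quotient, and $\Tr(W)_{(1,n)}$ genuinely depends on these normal directions. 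Restriction of $[\phi^{\mon}_g]$ to a locally closed subvariety is not the vanishing cycle class of the restricted function, so the fact that $\Tr(W)$ is constant along the $V_1$ (extension) directions of the stratum does not by itself give the claimed factorization; in general the two sides differ.

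What makes the argument work is precisely the motivic integral identity of Kontsevich--Soibelman, proved by L\^e Quy Thuong \cite{LQT}: using the $\CC^*$-action with weights $0$, $1$, $-1$ on $V_0$, $V_1$, $V_{-1}$, for which $\Tr(W)_{(1,n)}$ is invariant, one gets
\[
(\XX_{(1,n)}(Q^{\fram})_{[i]}\rightarrow V_0)_*\,\iota_{[i]}^*[\phi^{\mon}_{\Tr(W)_{(1,n)}}]=[\phi^{\mon}_{f_{[i]}}]\cdot\LL^{(i-n)/2},
\]
the exponent recording $\dim V_1-\dim V_{-1}$ (not the rank of the extension bundle, as you state). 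One then still needs the motivic Thom--Sebastiani theorem \cite{DL99} to split $[\phi^{\mon}_{f_{[i]}}]$ on $V_0=\XX_{(1,i)}(Q^{\fram})_{[i]}\times\XX_{n-i}(Q)$ as an external product, a step you fold silently into your factorization claim. Since your proposal neither invokes the integral identity nor supplies a substitute for it, the stratum-by-stratum identity — and hence the wall-crossing formula — is not established as written; supplying these two ingredients would turn your outline into essentially the paper's proof.
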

\begin{proof}
There is a decomposition into locally closed substacks
\[
\Mst_{(1,n)}(Q^{\fram})=\coprod_{i=0}^n\Mst_{(1,n)}(Q^{\fram})_{[i]}
\]
where 
\[
\Mst_{(1,n)}(Q^{\fram})_{[i]}\subset \Mst_{(1,n)}(Q^{\fram})
\]
is the substack of $Q^{\fram}$-representations $\rho$ for which the subspace generated by the image of $\rho(j)$ under the action of $\rho(a),\rho(b),\rho(c)$ is $i$-dimensional.  We define
\[
\iota_{[i]}\colon \XX_{(1,n)}(Q^{\fram})_{[i]}\hookrightarrow\XX_{(1,n)}(Q^{\fram})
\]
to be the inclusion of the subset for which the image of $\rho(j)$ lies in the first summand of the decomposition $\CC^n=\CC^i\oplus \CC^{n-i}$, and generates it, and for which $\rho(a),\rho(b),\rho(c)$ also preserve this summand.  We define $\Gl_{i,n-i}\subset \Gl(\CC^n)$ to be the subgroup of automorphisms preserving the filtration $0\subset \CC^i\subset \CC^n$.  Then
\[
\Mst_{(1,n)}(Q^{\fram})_{[i]}\cong \XX_{(1,n)}(Q^{\fram})_{[i]}/(\Gl(\CC)\times \Gl_{i,n-i})
\]
where the first factor of the gauge group acts by 
\[
z\cdot (\rho(a),\rho(b),\rho(c),\rho(j))=(\rho(a),\rho(b),\rho(c),z^{-1}\rho(j)).  
\]
We may identify 
\[
\ncHilb_n(Q)=\XX_{(1,n)}(Q^{\fram})_{[n]}/\Gl_n.
\]
The decomposition $\CC^n=\CC^i\oplus\CC^{n-i}$ induces a decomposition
\[
\XX_{(1,n)}(Q^{\fram})=V_0\times V_1\times V_{-1}
\]
with
\begin{align*}
V_0=&\XX_{(1,i)}(Q^{\fram})_{[i]}\times \XX_{n-i}(Q)\\
V_1=&\Hom(\CC^{n-i},\CC^i)^{\times 3}\\
V_{-1}=&\Hom(\CC^{i},\CC^{n-i})^{\times 3}\times \Hom(\CC,\CC^{n-i})
\end{align*}
and we have $\XX_{(1,n)}(Q^{\fram})_{[i]}=V_0\times V_1$.  Note that the diagram 
\[
\xymatrix{
\XX_{(1,n)}(Q^{\fram})_{[i]}\ar[d]^{\iota_{[i]}}\ar[r]&V_0\ar[rr]^-{\tilde{\lambda}_i\times \tilde{\lambda}_{n-i}}&&\Sym^i(\AAA{3})\times \Sym^{n-i}(\AAA{3})\ar[d]^{\cup}
\\
\XX_{(1,n)}(Q^{\fram})\ar[rrr]^{\tilde{\lambda}_n}&&&\Sym^n(\AAA{3})
}
\]
commutes.  Let $\CC^*$ act on each $V_k$ with weight $k$.  Denote by $f_{[i]}$ the restriction of $\Tr(W)_n$ to $V_0$.  Then $\Tr(W)_n$ is $\CC^*$-invariant, and by the integral identity \cite{LQT} we have
\begin{align*}
(\XX_{(1,n)}(Q^{\fram})_{[i]}\rightarrow V_0)_*\iota_{[i]}^*[\phi^{\mon}_{\Tr(W)_{(1,n)}}]=&[\phi^{\mon}_{f_{[i]}}]\cdot \LL^{(i-n)/2},
\end{align*}
where the exponent of $\LL$ is given by the difference in dimensions between $V_1$ and $V_{-1}$.

By the motivic Thom--Sebastiani theorem \cite{DL99}, we deduce that
\[
[\phi^{\mon}_{f_{[i]}}]=[\phi^{\mon}_{\Tr(W)_{(1,i)}}]\boxtimes [\phi^{\mon}_{\Tr(W)_{n-i}}]\in \Khat{\mon}(\Var/V_0).
\] 
Finally, setting 
\[
H_i=([\Gl(\CC)\times\Gl(\CC^i)\times\Gl(\CC^{n-i})])\in\Khat{}(\Var/\pt)
\]
we calculate
\begin{align*}
&[\Mst_{(1,n)}(Q^{\fram},W)\xrightarrow{\tilde{\lambda}_n} \Sym^n(\AAA{3})]_{\vir}=\\&\sum_{i=0}^n(\XX_{(1,n)}(Q^{\fram},W)_{[i]}\xrightarrow{\tilde{\lambda}_{n}}\Sym^n(\AAA{3}))_*\iota_{[i]}^*[\phi^{\mon}_{\Tr(W)_{(1,n)}}]\LL^{(1+n^2)/2}/[\Gl(\CC)\times\Gl_{i,n-i}]=\\
&\sum_{i=0}^n [\phi^{\mon}_{\Tr(W)_{(1,i)}}]\boxtimes [\phi^{\mon}_{\Tr(W)_{n-i}}]\cdot\LL^{(i-n)/2}\cdot \LL^{(1+n^2)/2}/[\Gl(\CC)\times\Gl_{i,n-i}]=\\
&\sum_{i=0}^n [\phi^{\mon}_{\Tr(W)_{(1,i)}}]\boxtimes [\phi^{\mon}_{\Tr(W)_{n-i}}]\cdot\LL^{(i-n)/2}\cdot \LL^{(1+i^2+(n-i)^2)/2}/H_i=\\
&\sum_{i=0}^n[\ncHilb_{i}(Q,W)\xrightarrow{\tilde{\lambda}_{i}} \Sym^{i}(\AAA{3})]_{\vir}\cdot_{\cup}[\Mst_{n-i}(Q,W)\xrightarrow{\tilde{\lambda}_{n-i}} \Sym^{n-i}(\AAA{3})]_{\vir}  \cdot\LL^{(i-n)/2}\cdot\frac{\LL^{1/2}}{\LL-1}
\end{align*}
as required.
\end{proof}
The next proposition is essentially a refinement of a classical result of Feit and Fine, who proved the counting result over $\mathbb{F}_q$ analogous to the identity 
\[
\sum_{n\geq 0}[\StComm_n]T^n=\EXP\left(\sum_{n\geq 1} \LL^2T^n\right)
\]
obtained by pushing forward both the left and the right hand side of \eqref{FF} to absolute motives.

\begin{proposition}
\label{2dDT}
There is an equality of generating series in $\MM_{\Sym(\AAA{2})}$:
\begin{equation}
\label{FF}    
\sum_{n\geq 0}\left[ \StComm_n\xrightarrow{\lambda_n}\Sym^n(\AAA{2})\right]=\EXP_{\cup}\left(\sum_{n\geq 1}[\AAA{2}\xrightarrow{\Delta}\Sym^n(\AAA{2})]/(\LL-1)\right).
\end{equation}

\end{proposition}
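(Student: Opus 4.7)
The plan is to identify $\StComm_n$ with the moduli stack of $n$-dimensional $\CC[y,z]$-modules, equivalently the stack of length-$n$ coherent sheaves on $\AAA{2}$, under which $\lambda_n$ becomes the Hilbert--Chow support morphism $\mathcal{F}\mapsto\sum_p\ell(\mathcal{F}_p)[p]$. Applying $\LOG_\cup$ to both sides of the claim, the proposition is equivalent to
\[
\LOG_\cup\Bigl(\sum_{n\geq 0}[\StComm_n \to \Sym^n(\AAA{2})]\Bigr) \;=\; \sum_{n\geq 1}\frac{[\AAA{2}\xrightarrow{\Delta}\Sym^n(\AAA{2})]}{\LL-1}
\]
in $\MM_{\Sym(\AAA{2})}$, and I would prove this by separately establishing the \emph{support} and the \emph{value} of the left hand side.

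For the support, I would use that since $\AAA{2}$ is affine, every length-$n$ module decomposes uniquely as $\bigoplus_p \mathcal{F}_p$ over its distinct support points. Stratifying $\StComm_n$ over $\Sym^n(\AAA{2})$ by the partition that records the multiplicities of distinct support points, the primary decomposition together with the commutative monoid structure $\cup$ on $\Sym(\AAA{2})$ should exhibit the contribution from strata with more than one distinct support point as exactly the $\EXP_\cup$-combination of smaller single-support pieces. Consequently $\LOG_\cup$ of the moduli series is supported on the union of the small diagonals $\Delta_n\colon\AAA{2}\hookrightarrow\Sym^n(\AAA{2})$; this is the structural input that distinguishes the $2$-Calabi--Yau setting coming from $\Pi_{\LQ{1}}=\CC[y,z]$, whose simple modules are all $1$-dimensional.

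Having localized $\LOG_\cup$ to the small diagonals, its value on each diagonal is determined by the absolute (stacky) Feit--Fine identity
\[
\sum_{n\geq 0}[\StComm_n]\,T^n \;=\; \EXP\Bigl(\sum_{n\geq 1}\frac{\LL^2}{\LL-1}T^n\Bigr)
\]
in $\MM\llb T\rrb$. This absolute identity follows by combining the Behrend--Bryan--Szendr\H{o}i computation $\Omega_{\LQ{3},\wt{W},n}=\LL^{3/2}$ with ordinary motivic dimensional reduction (Theorem~\ref{MotDDRpre} applied with $g_0=0$), which together identify $\mathcal{Z}_{\LQ{3},\wt{W}}(T)$ with $\sum_n [\StComm_n] T^n$ and re-package it as $\EXP$ of the BPS invariants. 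Pushing the diagonal $\LOG_\cup$ class forward to $\MM$ and using injectivity of $\LOG$ on $1+T\MM\llb T\rrb$ then pins the value of the $n$-th diagonal class down to $[\AAA{2}]/(\LL-1)$.

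The main obstacle is the support step: rigorously matching the stratification of $\StComm_n$ by primary-decomposition type with the expansion of $\EXP_\cup$ in $\MM_{\Sym(\AAA{2})}$. This is a combinatorial $\lambda$-ring identity whose content is that the different ways of assembling a length-$n$ module from punctual pieces are correctly counted by the plethystic exponential, and it relies on the uniqueness of primary decomposition together with the additivity of support under direct sums. Once this is in place, the absolute Feit--Fine identity yields the numerical value essentially for free.
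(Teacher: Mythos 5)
Your route is correct in outline but genuinely different from the paper's. The paper never factorizes $\StComm_n$ by support directly: it works relatively over $\Sym^n(\AAA{3})$, combining the wall-crossing identity of Proposition \ref{WCP} (framed vs.\ unframed series) with the explicit computation of the relative noncommutative Hilbert scheme series from \cite{DR19} and undeformed dimensional reduction; this pins down the relative BPS classes as $\LL^{-3/2}[\AAA{3}\xrightarrow{\Delta_n}\Sym^n(\AAA{3})]$, and the proposition follows by pushing forward along $\Sym(\AAA{3})\to\Sym(\AAA{2})$. You instead prove the diagonal support of $\LOG_{\cup}$ by the primary-decomposition stratification of $\StComm_n$ and then import the absolute Feit--Fine identity (which, as you say, follows from the Behrend--Bryan--Szendr\H{o}i value $\LL^{3/2}$ plus ordinary motivic dimensional reduction) to fix the punctual value. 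What the paper's approach buys is that the only factorization-by-support argument needed is the one already carried out in \cite{DR19} on the Hilbert scheme side, where one has a fine moduli scheme and no stacky bookkeeping, and the wall crossing delivers support and value simultaneously; what yours buys is a more self-contained and conceptually transparent argument that makes the $2$-Calabi--Yau origin of the diagonal support explicit. To make your version complete you would need (a) the relative power-structure identity over the monoid $\Sym(\AAA{2})$ matching the support stratification of $[\Comm_n]/[\Gl_n]$ with the expansion of $\EXP_{\cup}$ (your acknowledged main step; it is the stacky analogue of the arguments in \cite{GZLMH} and \cite{DR19}, standard but not free), (b) the observation that translation invariance trivializes the punctual family over $\AAA{2}$, so the class on each small diagonal is constant and hence genuinely determined by its pushforward to a point, and (c) the compatibility of $\EXP_{\cup}$ with pushforward along the monoid morphism $\Sym(\AAA{2})\to\NN$, which the paper also uses. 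With those three points spelled out, your argument gives a valid alternative proof.
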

\begin{proof}

Set $W=[A,B]C$.  We define the projection
\begin{align*}
\pi_n:&\Mat_{n\times n}(\CC)^{\times 3}\rightarrow \Mat_{n\times n}(\CC)^{\times 2}\\
&(A,B,C)\mapsto (B,C).
\end{align*}
By the relative statement of the (undeformed) dimensional reduction theorem (Theorem \ref{NPThm}), there are equalities of relative motives
\begin{align*}
\pi_{n,!}[\phi^{\mon}_{\Tr(W)_n}]=&[\Mat_{n\times n}(\CC)^{\times 3}\xrightarrow{g_n}Y_n\times\AAA{1}]\LL^{-3n^2/2} &\in \Khat{\mon}(\Var/Y_n)
\\
=&[\Comm_n\hookrightarrow Y_n]\LL^{-n^2/2}&\in\Khat{}(\Var/Y_n)
\end{align*}
where $g_n(A,B,C)=(B,C,\Tr([A,B]C))$ and $Y_n=\Mat_{n\times n}(\CC)^{\times 2}$.  Accounting for the way in which we have normalized motivic vanishing cycles \eqref{norm_fac}, there is an identity
\[
[\Mst_{(1,n)}(Q^{\fram},W)\xrightarrow{\tilde{\lambda}_n}\Sym^n(\AAA{3})]_{\vir}=[\Mst_{n}(Q,W)\xrightarrow{\tilde{\lambda}_n}\Sym^n(\AAA{3})]_{\vir}\frac{\LL^{(n+1)/2}}{\LL-1}
\]
and so by Proposition \ref{WCP} we deduce that
\begin{align}
\label{relWC}
&\sum_{n\geq 0}[\ncHilb_{n}(Q,W)\xrightarrow{\tilde{\lambda}_n}\Sym^n(\AAA{3})]_{\vir}\LL^{-n/2}=\\ \nonumber
&\left(\sum_{n\geq 0}[\Mst_{n}(Q,W)\xrightarrow{\tilde{\lambda}_n} \Sym^n(\AAA{3})]_{\vir}\right)\cdot_{\cup}\left(\sum_{n\geq 0}[\Mst_{n}(Q,W)\xrightarrow{\tilde{\lambda}_n} \Sym^n(\AAA{3})]_{\vir}\LL^{-n}\right)^{-1}.
\end{align}
On the other hand, by \cite[Prop.4.3, Cor.4.4]{DR19} we deduce that
\begin{align}
    \label{DRrel}
\sum_{n\geq 0}[\ncHilb_{n}(Q,W)\xrightarrow{\tilde{\lambda}_n}\Sym^n(\AAA{3})]_{\vir}\LL^{-n/2}=\EXP_{\cup}\left(\sum_{n\geq 1}[\AAA{3}\xrightarrow{\Delta_n}\Sym^n(\AAA{3})]\frac{\LL^{-2}(1-\LL^{-n})}{1-\LL^{-1}}\right).
\end{align}
Since $\EXP\colon \MMM{\Sym(\AAA{3})_+}\rightarrow 1+\MMM{\Sym(\AAA{3})_+}$ is an isomorphism, we may write
\[
\sum_{n\geq 0}[\Mst_{n}(Q,W)\xrightarrow{\tilde{\lambda}_n} \Sym^n(\AAA{3})]_{\vir}=\EXP_{\cup}\left(\sum_{n\geq 1} \Omega_n\frac{\LL^{1/2}}{\LL-1}\right)
\]
for $\Omega_n\in\MM_{\Sym^n(\AAA{3})}$.  Then \eqref{relWC} implies that
\begin{align*}
&\sum_{n\geq 0}[\ncHilb_{n}(Q,W)\xrightarrow{\tilde{\lambda}_n}\Sym^n(\AAA{3})]_{\vir}\LL^{-n/2}=\EXP_{\cup}\left(\sum_{n\geq 1}\Omega_n\LL^{1/2}\frac{1-\LL^{-n}}{\LL-1} \right)
\end{align*}
and so $\Omega_n=\LL^{-3/2}[\AAA{3}\xrightarrow{\Delta_n}\Sym^n(\AAA{3})]$ for all $n\geq 1$ by \eqref{DRrel}.  

Let $h\colon \Sym(\AAA{3})\rightarrow \Sym(\AAA{2})$ be the map induced on tuples of points by the projection $(x,y,z)\mapsto (y,z)$.  Putting everything together,
\begin{align*}
\sum_{n\geq 0}\left[ \StComm_n\xrightarrow{\lambda_n}\Sym^n(\AAA{2})\right]=&\sum_{n\geq 0}\lambda_{n,!}\pi_{n,!}[\phi^{\mon}_{\Tr(W)_n}]\,\LL^{n^2/2}/[\Gl_n]\\
=&h_!\EXP_{\cup}\left(\sum_{n\geq 1}\LL^{-3/2}[\AAA{3}\xrightarrow{\Delta_n}\Sym^n(\AAA{3})]\frac{\LL^{1/2}}{\LL-1}\right)\\
=&\EXP_{\cup}\left(\sum_{n\geq 1}[\AAA{2}\xrightarrow{\Delta_n}\Sym^n(\AAA{2})]/(\LL-1)\right)
\end{align*}
as required.
\end{proof}

\begin{theorem}
\label{CMPSconj}
The motivic DT invariants for the quiver with potential $(Q,W_d)$ are given by the formula
\[
\Omega_{Q,W_d,n}=[\LL]^{1/2}[\AAA{1}\xrightarrow{t\mapsto t^d}\AA_\CC^1]\in\Khat{\mon}(\Var/\pt)
\]
for all $n\geq 1$.
\end{theorem}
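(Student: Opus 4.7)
The plan is to deduce the formula by combining the motivic deformed dimensional reduction theorem (Theorem \ref{MotDDR}) with the Feit--Fine type identity of Proposition \ref{2dDT}. First, specializing \eqref{nilpComm} and using that $\Gl_n$ is a special group, we have
\[
[\Mst_n(Q, W_d)]_{\vir} = \LL^{-n^2/2} \cdot [\Comm_n \xrightarrow{(B,C) \mapsto \Tr(C^d)} \AAA{1}] \cdot [\Gl_n]^{-1} \cdot \LL^{n^2/2} = [\StComm_n \xrightarrow{\Tr(C^d)} \AAA{1}],
\]
so the motivic DT partition function becomes
\[
\mathcal{Z}_{Q, W_d}(T) = \sum_{n \geq 0} [\StComm_n \xrightarrow{\Tr(C^d)} \AAA{1}]\, T^n \in \MMM{}\llb T \rrb.
\]

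The function $\Tr(C^d)$ depends only on the eigenvalues of $C$ and hence factors as $f_{d,n} \circ \lambda_n$, where $f_{d,n}\colon \Sym^n(\AAA{2}) \to \AAA{1}$ sums the $d$-th powers of the $y$-coordinates. Since the collection $(f_{d,n})_n$ is additive under the union of point configurations, the map
\[
\sigma := (\deg, f_d)\colon \Sym(\AAA{2}) \to \NN \times \AAA{1}
\]
is a homomorphism of finite-type commutative monoids (taking $\cup$ on the source to coordinatewise addition on the target). I would then push forward Proposition \ref{2dDT} along $\sigma$; since $\sigma$ is a monoid homomorphism, the composition of $\sigma_!$ with the reinterpretation of the $\AAA{1}$-factor as the monodromy direction intertwines $\EXP_\cup$ on $\MM_{\Sym(\AAA{2})}$ with the usual plethystic exponential $\EXP$ on $\MMM{}\llb T \rrb_+$, yielding
\[
\mathcal{Z}_{Q, W_d}(T) = \EXP\left(\sum_{n \geq 1} \frac{[\AAA{2} \xrightarrow{(x,y) \mapsto n y^d} \AAA{1}]}{\LL - 1}\, T^n\right).
\]

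To finish, for each fixed $n \geq 1$ the $x$-direction factors off trivially to contribute a factor of $\LL$, and the $\GG_m$-equivariant automorphism $y \mapsto n^{1/d} y$ of $\AAA{1}$ identifies $[\AAA{1} \xrightarrow{y \mapsto n y^d} \AAA{1}]$ with $[\AAA{1} \xrightarrow{t \mapsto t^d} \AAA{1}]$ in the monodromic Grothendieck ring. Comparing the resulting identity
\[
\mathcal{Z}_{Q, W_d}(T) = \EXP\left(\sum_{n \geq 1} \frac{\LL^{1/2} \cdot [\AAA{1} \xrightarrow{t^d} \AAA{1}]}{\LL^{1/2} - \LL^{-1/2}}\, T^n\right)
\]
with the defining equation \eqref{MDTdef} of $\Omega_{Q, W_d, n}$ gives the claimed formula. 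The main obstacle is the compatibility of $\sigma_!$ with plethystic exponentials across the passage from the relative Grothendieck group on $\Sym(\AAA{2})$ to the absolute monodromic Grothendieck ring; this is a formal consequence of $\sigma$ being a monoid homomorphism of finite type, but care is needed to match the operations $\sigma^n$ across the two incarnations.
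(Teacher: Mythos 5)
Your proposal is correct and follows essentially the same route as the paper: it combines the identity \eqref{nilpComm} (from Theorem \ref{MotDDR}) with Proposition \ref{2dDT}, pushes forward along the monoid morphism recording length and the sum $\sum y_i^d$ (the paper's $\lngth$ and $k_n$, your $\sigma$), uses that monoid homomorphisms intertwine $\EXP_\cup$ with $\EXP$, and then simplifies $[\AAA{2}\xrightarrow{(x,y)\mapsto ny^d}\AAA{1}]$ by splitting off $\LL$ and rescaling before comparing with \eqref{MDTdef}. The compatibility point you flag at the end is exactly the justification the paper invokes, so there is no gap.
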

In the case $d=3$, the above theorem is a verification of \cite[Conj.3.3]{CMPS}.  In \cite{CMPS}, the equivalent (via isomorphism \eqref{KTrans}) formulation $[\LL]^{1/2}(1-[\mu_d])$ was given for the motivic DT invariants, in the ring of $\muhat$-equivariant motives.  For $n=1$ and general $d$, the conjecture is trivial. For $n=2$ and $d=3$, its verification is already rather involved, but was successfully carried out by Le Bruyn \cite{LLB16}.  
\begin{proof}
Let $\lngth\colon\coprod_{n\geq 0}\Sym^n(\AA_{\CC}^2)\rightarrow \NN$ be the morphism of monoids taking $\Sym^n(\AAA{2})$ to the point $n$.  We define the morphism
\begin{align*}
    k_n\colon &\Sym^n(\AA_{\CC}^2)\rightarrow \AAA{1}\\
    &((x_1,y_1),\ldots(x_n,y_n))\mapsto\sum_{i=1}^ny_i^d.
\end{align*}
Then $k=\coprod_{n\geq 0}k_n$ is a morphism of commutative monoids, so that $k_!$ commutes with taking plethystic exponentials.  Combining \eqref{nilpComm} with Proposition \ref{2dDT} we deduce that 
\begin{align*}
\mathcal{Z}_{Q,W_d}(T)\colonequals  &\sum_{n\geq 0}\left([\XX_n(Q,W_d)]_{\vir}\LL^{n^2/2}/[\Gl_n]\right)T^n\\
=&\sum_{n\geq 0}\left([\Comm_n\xrightarrow{(b,c)\mapsto \Tr(c^d)} \AA_{\CC}^1]/[\Gl_n]\right)T^n&\textrm{Equation \eqref{nilpComm}}\\
=&\sum_{n\geq 0}\left(k_{n,!}\lambda_{n,!}[\Comm_n\rightarrow\Comm_n]/[\Gl_n]\right)T^n\\
=&k_!\EXP_{\cup}\left(\sum_{n\geq 1}[\AAA{2}\xrightarrow{\Delta_n}\Sym^n(\AAA{2})]/(\LL-1)\right)&\textrm{Proposition \ref{2dDT}}\\
=&\EXP\left(\sum_{n\geq 1}\left(k_{n,!}[\AAA{2}\xrightarrow{\Delta_n}\Sym^n(\AAA{2})]/(\LL-1)\right)T^n\right)\\
=&\EXP\left(\sum_{n\geq 1}\left([\AAA{2}\xrightarrow{(y,z)\mapsto nz^d}\AAA{1}]/(\LL-1)\right)T^n\right).
\end{align*}
The result then follows by comparing with \eqref{MDTdef}, since
\[
[\AAA{2}\xrightarrow{(y,z)\mapsto nz^d}\AAA{1}]/(\LL-1)=[\AAA{1}\xrightarrow{z\mapsto z^d}\AAA{1}]\LL^{1/2}/(\LL^{1/2}-\LL^{-1/2}).
\]
\end{proof}

\section{Vanishing cycles and cohomological DT theory}\label{coDTdefs}

For the rest of the paper we leave behind the naive Grothendieck ring of motives and work in the category of monodromic mixed Hodge structures and monodromic mixed Hodge modules, the natural home of \textit{cohomological} DT theory.  We introduce the key features here, for a fuller reference the reader is advised to consult \cite[Sec.2]{DaMe15b}.  

\subsection{Monodromic mixed Hodge modules}\label{mixedHodge}
Let $X$ be a complex variety.  We will work with the category of mixed Hodge modules on $X\times\AA_{\CC}^1$, as defined by Saito \cite{Sai88,Saito1,Saito89}.    For a complex variety $Y$, we denote by $\rat_Y\colon \MHM(Y)\rightarrow \Perv(Y)$ the (faithful) forgetful functor to perverse sheaves on $Y$.

We define the category of \textit{monodromic mixed Hodge modules} 
\[\MMHM(X)\colonequals \mathcal{B}_X/\mathcal{C}_X\] as the Serre quotient of two Abelian subcategories of $\MHM(X\times\AA_{\CC}^1)$.  We define $\mathcal{B}_X$ to be the full subcategory with objects those mixed Hodge modules such that for each $x\in X$ the underlying complex of constructible sheaves of 
\[\rat_{\GG_m} (x\times\GG_m\rightarrow X\times\AAA{1})^*\mathcal{F}\]
has locally constant cohomology, while $\mathcal{C}_X$ is its full subcategory with objects which satisfy the stronger condition that each $\rat_{\AAA{1}}(x\times\AAA{1}\rightarrow X\times\AAA{1})^*\mathcal{F}$ has constant cohomology sheaves.  We write $\MMHS\colonequals \MMHM(\pt)$.  

We denote by $\Dub(\MMHM(X))$ the derived category of (not necessarily bounded) complexes of monodromic mixed Hodge modules.

Objects in $\MMHM(X)$ have a weight filtration inherited from Saito's weight filtration of objects in $\MHM(X\times\AA_{\CC}^1)$, and we say that an object $\mathcal{F}\in\Ob(\Dub(\MMHM(X)))$ is \textit{pure} if $\Ho^j(\mathcal{F})\in\Ob(\MMHM(X))$ is pure of weight $j$ for all $j\in\mathbb{Z}$.  
\smallbreak

Fix a finite quiver $Q$.  As in \S \ref{DTdefs} we denote by 
\[
\Msp_{\gamma}(Q)\colonequals  \Spec\left(\Gamma(\XX_{\gamma}(Q))^{\Gl_{\gamma}}\right)
\]
the coarse moduli space of $\gamma$-dimensional $Q$-representations.  We denote by 
\[\Msp^{\stab}_{\gamma}(Q)\subset \Msp_{\gamma}(Q)\]
the smooth irreducible open subvariety of simple modules, which is dense if it is nonempty.  The closed points of $\Msp_{\gamma}(Q)$ are in bijection with semisimple $\gamma$-dimensional $\mathbb{C}Q$-modules.  We set
\[
\Msp(Q)\colonequals  \coprod_{\gamma\in\mathbb{N}^{Q_0}}\Msp_{\gamma}(Q),
\]
a monoid in the category of schemes with monoid structure denoted $\oplus$, as at the level of closed points it takes a pair of semisimple $\mathbb{C}Q$-modules to their direct sum.  The map $\oplus$ is finite by \cite[Lem.2.1]{MeRe14}.  

We denote by 
\[
\Dblf(\MMHM(\Msp(Q)))\subset\Dub(\MMHM(\Msp(Q)))
\]
the full subcategory containing those objects such that for each $\gamma$ and each weight $n$, $\Gr_W^n(\mathcal{F}\lvert_{\Msp_{\gamma}})$ has bounded total cohomology, and for each dimension vector $\gamma\in\mathbb{N}^{Q_0}$,
there is an equality $\Gr_W^n(\mathcal{F}\lvert_{\Msp_{\gamma}})=0$ for $n\gg 0$.  There is a symmetric monoidal product defined on $\Dblf(\MMHM(\Msp(Q)))$ by
\[
\mathcal{F}\boxtimes_{\oplus}\mathcal{G}\colonequals  \left(\Msp\times\AAA{1}\times \Msp\times\AAA{1}\xrightarrow{(\rho,t,\rho',t')\mapsto(\rho\oplus\rho', t+t')}\Msp\times\AAA{1}\right)_*(\mathcal{F}\boxtimes\mathcal{G})
\]
which is exact and preserves weights by \cite[Prop.3.5]{DaMe15b}.  

Let $z\colon\{0\}\rightarrow \AA_{\CC}^1$ be the inclusion of the origin.  Then the functor $(\id_{\Msp(Q)}\times z)_*$ provides an embedding of symmetric monoidal categories \[\MHM(\Msp(Q))\hookrightarrow\MMHM(\Msp(Q))\] which moreover preserves weights.  In this way we consider $\MHM(\Msp(Q))$ as a full symmetric monoidal subcategory of $\MMHM(\Msp(Q))$.
Let \begin{equation}\label{defLLL}
    \LLL\colonequals  z_*\HO_c(\AA_{\CC}^1,\QQ).
\end{equation}
Then $\LLL$ is concentrated in cohomological degree 2, and its second cohomology is a pure weight 2 one-dimensional monodromic mixed Hodge structure.  Moreover, there is a tensor square root to $\LLL$, defined by
\begin{equation}\label{defLLLh}
\LLL^{1/2}:=(\AAA{1}\xrightarrow{x\mapsto x^2}\AA_{\CC}^1)_*\QQ_{\AA_{\CC}^1}.\end{equation}
Note that there is no tensor square root for $\LLL$ inside $\Dblf(\MHS)$, since a pure odd-weight Hodge structure must have even dimension.

\subsection{Vanishing cycles}
\label{vancycles}
Let $Y$ be a smooth variety, and consider a regular function
\[f:Y\to\AAA{1}.\]
Let $\kappa:Y_0\to Y$ be the inclusion of the zero fiber of $f$, and consider the pullback diagram induced by $\text{exp}:\AAA{1}\to\AAA{1}$
\[
\begin{tikzcd}
\widetilde{Y} \arrow{r}{} \arrow{d}{p} & \AAA{1} \arrow{d}{\text{exp}}\\%
Y \arrow{r}{f} & \AAA{1}. 
\end{tikzcd}
\]
\\

Define the nearby cycle functor $\psi_f: \Dub(\Perv(Y))\to \Dub(\Perv(Y))$ by the formula
\[\psi_f\colonequals \kappa_*\kappa^*p_*p^*. \]

The vanishing cycle functor $\varphi_f[-1]\colon
\Dub(\Perv(Y))\to \Dub(\Perv(Y))$ sends perverse sheaves to perverse sheaves.
For $\mc{F}\in \Dub(\Perv(Y))$, $\varphi_f\mc{F}$ fits in the distinguished triangle
\[\kappa_*\kappa^*\mc{F}\to \psi_f \mc{F}\to \varphi_f \mc{F}. \]

In \cite{Sai88,Saito89}, Saito defines an upgrade of the nearby and vanishing cycle functors to functors
\[
\uppsi_f[-1], \phi_f[-1]\colon\MHM(Y)\rightarrow \MHM(Y)
\]
for the category of mixed Hodge modules.  These are upgrades in the sense that there are natural isomorphisms
\begin{align*}
&\rat_Y\circ \uppsi_f[-1]\cong \psi_f[-1]\circ \rat_Y\\
&\rat_Y\circ \phi_f[-1]\cong \varphi_f[-1]\circ \rat_Y.
\end{align*}
We denote by the same symbol the functor $\phi_f\colon\Dub(\MHM(Y))\rightarrow \Dub(\MHM(Y))$.  We define
\begin{equation}
\label{phimdef}
\phim{f}\colonequals (Y\times\GG_m\rightarrow Y\times\mathbb{A}^1)_!\phi_{f/u}(Y\times\GG_m\xrightarrow{\pi_Y} Y)^*\colon\MHM(Y)\rightarrow\MMHM(Y).
\end{equation}
Since there is a natural isomorphism 
\begin{equation}    
\label{commTate}
\phi^{\mon}_f(\mathcal{F}\otimes\LLL^{n/2})\cong\phi^{\mon}_f\mathcal{F}\otimes\LLL^{n/2} 
\end{equation}
for $n$ even, we can use the right hand side of \eqref{commTate} to define the left hand side when $n$ is odd.  

There is a forgetful functor $\forg:\MMHM(X)\rightarrow \Perv(X)$ defined as follows.  Denote by $t$ the coordinate for $\AAA{1}$.  We denote by the same symbol the induced function on $X\times\AAA{1}$.  Then the vanishing cycle functor restricts to give a functor
\[
\phi_t[-1]:\mathcal{B}_X\rightarrow \MHM(X)
\]
where we have identified $X$ with the zero locus of $t$.  Since all objects in $\mathcal{C}_X$ are sent to the zero object by this functor, there is a unique functor \[\overline{\phi_t}[-1]:\MMHM(X)\rightarrow \MHM(X)\] 
through which $\phi_t[-1]$ factors.  Composing with the faithful forgetful functor $\rat_X:\MHM(X)\rightarrow \Perv(X)$, we obtain the faithful functor
\[
\forg_X=\rat_X\circ\overline{\phi_t}[-1]:\MMHM(X)\rightarrow \Perv(X),
\]
see \cite[Sec.2.1]{DaMe15b} for further details.
\\

We next explain how to extend the definition of vanishing cycles to smooth quotient stacks. Let $Y$ be a smooth variety with an action of a reductive group $G\subset \Gl_n$, and consider a $G$-invariant function $f\colon Y\to\AAA{1}$. For $N\geq n$, define $\Fr(n,N)\subset (\AAA{N})^n$ the open set of $n$-tuples of linearly independent vectors in $\AAA{N}$. The action of $G$ on $\Fr(n, N)$ is scheme-theoretically free, so $Y\times_G \text{Fr}(n, N)$ is a smooth variety.  Let $f_N\in\Gamma(Y\times_G \text{Fr}(n, N))$ be the function induced by $f$.  
Then there are isomorphisms, which we can take as a definition for the purposes of this paper:
\begin{equation}\label{equiv}
\HO^j_c\left(Y/G, \phi^{\text{mon}}_f\mathbb{Q}\right)\cong \lim_{N\rightarrow \infty}\HO^j_c\left(Y\times_G \text{Fr}(n, N), \phi^{\text{mon}}_{f_N}\mathbb{Q}\otimes \LLL^{-nN}\right).    
\end{equation}
Let $p\colon Y\rightarrow X$ be a $G$-invariant morphism.  Let $p_N\colon Y\times_G\Fr(n,N)\rightarrow X$ be the induced morphism.  Then there are isomorphisms, which we again take to be the definition:
\begin{equation}\label{rel_equiv}
\Ho^j(X, p_!\phi^{\text{mon}}_f\mathbb{Q})\cong \lim_{N\rightarrow \infty}\Ho^j(p_{N,!} \phi^{\text{mon}}_{f_N}\mathbb{Q}\otimes \LLL^{-nN}).    
\end{equation}

\subsection{Cohomological BPS invariants.}\label{coBPS_defs}
Here we briefly review the definition of cohomological BPS invariants for a quiver with potential, as well as the cohomological integrality theorem that expresses the cohomology of the vanishing cycle cohomology of the stack of $\CC(Q,W)$-modules in terms of the cohomological BPS invariants.

Let $Q$ be a quiver and let $W\in\CC Q/[\CC Q,\CC Q]$ be a potential.  For $N\in\NN$, we define the quiver $Q_N$ to be the quiver obtained from $Q$ by adding one extra vertex, labelled $\infty$, and $N$ arrows from $\infty$ to $i$ for each $i\in Q_0$.  We identify 
\[
\NN^{(Q_N)_0}=\NN \oplus \NN^{Q_0}.
\]
Fix a dimension vector $\gamma\in \NN^{Q_0}$.  Let 
\[
\XX^{\stab}_{(1,\gamma)}(Q_N)\subset\XX_{(1,\gamma)}(Q_N)
\]
be the open subvariety with closed points corresponding
to $\CC Q_N$-modules $\rho$ such that there are no proper submodules $\rho'\subset \rho$ with $\dim(\rho')_{\infty}=1$.
The $\Gl_{\gamma}$-action on this variety is scheme-theoretically free, and we define
\[
\Msp^{\fram}_{N,\gamma}(Q)\colonequals \XX^{\stab}_{(1,\gamma)}(Q_N)/\Gl_{\gamma},
\]
the fine moduli space of $\gamma$-dimensional stable $N$-framed modules.  This space is smooth, and the forgetful map 
\[
q_{N,\gamma}\colon\Msp^{\fram}_{N,\gamma}(Q)\rightarrow\Msp_{\gamma}(Q)
\]
is proper. We continue to write $\WW_{\gamma}$ for the function defined by $\Tr(W)$ on $\Msp_{\gamma}(Q)$, and we write $\WW_{N,\gamma}$ for the function defined on $\Msp^{\fram}_{N,\gamma}(Q)$.  

There are isomorphisms
\begin{align*}
\Ho^j(p_{\gamma,!}\phim{\WWW_{\gamma}}\QQ_{\Mst_\gamma(Q)})\cong&\lim_{N\rightarrow \infty} \Ho^j\left(q_{N,\gamma,!}\phim{\WW_{N,\gamma}}\QQ_{\Msp_{N,\gamma}^{\fram}(Q)}\otimes \LLL^{-N \cdot\sum_{i\in Q_0} \gamma_i}\right)
\\
\HO_c^j(\Mst_{\gamma}(Q),\phim{\Tr(W)_{\gamma}}\QQ_{\Mst_\gamma(Q)})\cong&\lim_{N\rightarrow \infty}\HO_c^j\left( \Msp_{N,\gamma}^{\fram}(Q),\phim{\WW_{N,\gamma}}\QQ_{\Msp_{N,\gamma}^{\fram}(Q)}\otimes \LLL^{-N \cdot\sum_{i\in Q_0} \gamma_i}\right).
\end{align*}
As a very special case, letting $Q$ be the quiver with one vertex and no loops, and taking the dimension vector $(1)$, we calculate
\begin{align*}
\HO_c(\pt/\CC^{*})=&\lim_{N\rightarrow \infty}\left(\HO(\CC\mathbb{P}^{N-1},\QQ)\otimes\LLL^{-N}\right)\\
=&\bigoplus_{j\leq -1}\LLL^j.
\end{align*}

It follows from the properness of the maps $q_{N,\gamma}$ \cite[Prop.4.4]{DaMe15b} that there is an isomorphism
\begin{align}
\label{decomp_thm}
\HO_c(\Mst_{\gamma}(Q),\phim{\WWW_{\gamma}}\QQ_{\Mst_\gamma(Q)})\cong &\HO_c\left(\Msp_{\gamma},\Ho\left(p_!\phim{\WWW_\gamma}\QQ_{\Mst_\gamma}\right)\right).
\end{align}

By the cohomological integrality theorem \cite[Thm.A]{DaMe15b}, there is an isomorphism of monodromic mixed Hodge modules
\begin{equation}
\label{CohInt}
\bigoplus_{\gamma\in\mathbb{N}^{Q_0}}\Ho\left(p_{\gamma,!}\phim{\Tr(W)}\QQ_{\Mst_\gamma}\right)\otimes\LLL^{(\gamma,\gamma)/2}\cong \Sym_{\boxtimes_{\oplus}}\left(\BPS_{Q,W,\gamma}\otimes\HO_c(\pt/\CC^{*})_{\vir}\right).
\end{equation}
Here we define
\[
\HO_c(\pt/\CC^{*})_{\vir}=\HO_c(\pt/\CC^{*})\otimes\LLL^{1/2}
\]
and
\[
\BPS_{Q,W,\gamma}\colonequals  \begin{cases} \phim{\WW_\gamma}\ICS_{\Msp_\gamma(Q)}(\QQ)\otimes \LLL^{-\dim(\Msp_{\gamma}(Q))/2}& \textrm{if }\Msp^{\stab}_{\gamma}(Q)\neq \emptyset\\
0&\textrm{otherwise.}\end{cases}
\]
Here, $\ICS_{\Msp_\gamma(Q)}(\QQ)$ is (up to shifting cohomological degree down by $\dim(\Msp_{\gamma}(Q))$) the intersection complex mixed Hodge module on $\Msp_{\gamma}(Q)$ obtained by taking the intermediate extension of the constant mixed Hodge module on $\Msp^{\stab}_{\gamma}(Q)$,  i.e. 
\[
\rat_{\Msp_{\gamma}(Q)}\ICS_{\Msp_\gamma(Q)}(\QQ)[\dim(\Msp_{\gamma}(Q))]
\]
is the simple perverse sheaf on $\Msp_{\gamma}(Q)$ extending $\QQ_{\Msp^{\stab}_{\gamma}(Q)}[\dim(\Msp^{\stab}_{\gamma}(Q))]$. 

It then follows from \eqref{decomp_thm} that
\begin{equation}
\label{coh_int_thm}    
\bigoplus_{\gamma\in\mathbb{N}^{Q_0}}\HO_c(\Mst_{\gamma}(Q),\phim{\Tr(W)}\QQ_{\Mst_\gamma(Q)})\otimes\LLL^{\chi_Q(\gamma,\gamma)/2}\cong \Sym\left(\BPSA{Q,W,\gamma}\otimes\HO_c(\pt/\CC^{*})_{\vir}\right),
\end{equation}
where for dimension vectors $d,e\in \mathbb{N}^{Q_0}$, the pairing $\chi_Q(d,e)$ is defined by
\[\chi_Q(d,e):=\sum_{i\in Q_0}d_ie_i-\sum_{a\in Q_1}d_{s(a)}e_{t(a)}\]
and the $\BPSA{Q,W,\gamma}$ invariants, duals of the BPS invariant considered in \cite[Thm.A]{DaMe15b}, are defined by the formula
\[
\BPSA{Q,W,\gamma}\colonequals \HO_c(\Msp_{\gamma}(Q),\BPS_{Q,W,\gamma}).
\]
\smallbreak

Finally, for the connection to the the motivic sections of the paper, there is a ring homomorphism
\begin{align*}
\chi_{\MMHS}\colon\Khat{\muhat}(\Var/\pt)&\rightarrow \Grot(\MMHS)\\
[X]&\mapsto -[(X\times_{\mu_d}\GG_m\xrightarrow{(x,t)\mapsto t^d}\mathbb{A}_{\mathbb{C}}^1)_!\QQ_{X\times_{\mu_d}\GG_m}]
\end{align*}
taking the motivic DT invariants to the Hodge theoretic DT invariants, and we have 
\[
\chi_{\MMHS}(\Omega_{Q,W,\gamma})=[\BPSA{Q,W,\gamma}]_{\KK_0},
\]
see \cite[Sec.2.7]{Dav19a} for details.
\section{Cohomological DT invariants for the deformed Weyl potential}
\label{DefWeylCoh}
This section is devoted to proving Theorem \ref{cohdef}.  Throughout the section we fix $Q=\LQ{3}$, the three loop quiver with loops labelled $a,b,c$, and with potential as in \eqref{WdDef}: \[W_d=[a,b]c+c^d\]
for $d\geq 2$. 

Our task is to determine the BPS sheaves \[\BPS_{Q,W_d,n}\in \MMHM(\Msp_n(Q))\]
along with the (dual) BPS cohomology $\BPSA{Q,W_d,n}$ as defined in \S \ref{coBPS_defs}.  We follow the strategy of \cite{Dav16b}; we prove that the monodromic mixed Hodge modules $\BPS_{Q,W_d,n}$ are pure, have very restricted support, and are moreover constant on their support.  Due to these facts, it is enough to calculate the class of $\BPSA{Q,W_d,n}$ in the Grothendieck group of monodromic mixed Hodge structures.  The result then follows from our earlier motivic calculations, specifically Theorem \ref{CMPSconj}.
\begin{lemma}
\label{prefac}
Let $\rho$ be a representation of $\CC(Q,W_d)$.  Then each of the operators $\rho(a),\rho(b),\rho(c)$ preserve the generalized eigenspaces of each of the others.  Moreover, the only nontrivial generalized eigenspace for $\rho(c)$ is for the generalized eigenvalue zero.
\end{lemma}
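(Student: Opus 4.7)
My plan is to compute the Jacobi relations explicitly and then extract both statements from a mixture of commutator identities and a trace argument.

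First I would write down the three relations defining $\CC(Q,W_d)$ by taking cyclic derivatives of $W_d=[a,b]c+c^d=abc-bac+c^d$. These yield
\[
[\rho(b),\rho(c)]=0,\qquad [\rho(c),\rho(a)]=0,\qquad [\rho(a),\rho(b)]=-d\,\rho(c)^{d-1}.
\]
The first two relations say that $\rho(c)$ commutes with both $\rho(a)$ and $\rho(b)$. From this, $\rho(a),\rho(b),\rho(c)$ all preserve the generalized eigenspaces of $\rho(c)$, and $\rho(c)$ preserves the generalized eigenspaces of $\rho(a)$ and of $\rho(b)$. So the only thing left to check for the first statement is that $\rho(b)$ preserves the generalized eigenspaces of $\rho(a)$, and symmetrically $\rho(a)$ those of $\rho(b)$.

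For this I will use the third relation, together with the fact that $\rho(c)^{d-1}$ commutes with $\rho(a)$. A routine induction on $k$ gives the identity
\[
(\rho(a)-\alpha)^k \rho(b) \;=\; \rho(b)(\rho(a)-\alpha)^k \;-\; kd\,(\rho(a)-\alpha)^{k-1}\rho(c)^{d-1}.
\]
Applying this to $v$ in the generalized $\alpha$-eigenspace $V_\alpha$ of $\rho(a)$, with $(\rho(a)-\alpha)^N v=0$, and using that $\rho(c)^{d-1}$ commutes with $(\rho(a)-\alpha)$ to move it past, one obtains $(\rho(a)-\alpha)^{N+1}\rho(b)v=0$, so $\rho(b)v\in V_\alpha$. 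The argument with the roles of $a$ and $b$ swapped is identical, proving the first assertion.

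For the second assertion, fix a generalized eigenvalue $\lambda$ of $\rho(c)$ with eigenspace $V_\lambda$. By the first part, $\rho(a)$ and $\rho(b)$ restrict to endomorphisms $\rho(a)|_{V_\lambda}$ and $\rho(b)|_{V_\lambda}$, and the relation $[\rho(a),\rho(b)]=-d\rho(c)^{d-1}$ restricts to $V_\lambda$ as well. Taking traces and using that the trace of a commutator vanishes yields
\[
0 \;=\; \operatorname{tr}\bigl([\rho(a)|_{V_\lambda},\rho(b)|_{V_\lambda}]\bigr) \;=\; -d\,\operatorname{tr}\bigl(\rho(c)^{d-1}|_{V_\lambda}\bigr) \;=\; -d\,(\dim V_\lambda)\,\lambda^{d-1},
\]
since $\rho(c)|_{V_\lambda}$ has $\lambda$ as its only generalized eigenvalue. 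Hence $V_\lambda=0$ unless $\lambda=0$.

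There is no real obstacle here; the only slightly delicate point is the induction identity in the second paragraph, which crucially uses that $\rho(c)^{d-1}$ commutes with $\rho(a)$. Everything else is formal manipulation of the Jacobi relations.
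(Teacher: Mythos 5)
Your proof is correct, and for the first assertion it is the same argument as the paper's: both use that the relations $\partial W_d/\partial a=[b,c]$ and $\partial W_d/\partial b=[c,a]$ make $\rho(c)$ commute with $\rho(a)$ and $\rho(b)$, and that $\partial W_d/\partial c$ makes $[\rho(a),\rho(b)]$ a scalar multiple of $\rho(c)^{d-1}$, so that the identity $[\beta^m,\rho(a)]=m[\beta,\rho(a)]\beta^{m-1}$ (your displayed formula is exactly this, with the roles of $a$ and $b$ exchanged) annihilates $\beta^m\rho(a)v$ for $m\gg 0$. The genuine difference is in the second assertion: the paper does not prove nilpotency of $\rho(c)$ at all but cites \cite[Lem.3.7+Lem.3.9]{CMPS}, whereas you give a short self-contained proof by restricting $[\rho(a),\rho(b)]=-d\,\rho(c)^{d-1}$ to a generalized eigenspace $V_\lambda$ of $\rho(c)$ (legitimate, and non-circular, because $\rho(a)$ and $\rho(b)$ preserve $V_\lambda$ by the commutation relations alone) and taking traces to get $d(\dim V_\lambda)\lambda^{d-1}=0$, hence $\lambda=0$ since $d\geq 2$. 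This is essentially the standard trace argument underlying the cited lemmas, so it buys nothing new mathematically, but it makes the lemma self-contained; note that it does use finite dimensionality of $\rho$ (for the trace and for the generalized eigenspace decomposition), which is the setting the paper intends, and that the sign of $d\rho(c)^{d-1}$ (you write $-$, the paper's proof writes $+$) is immaterial to both arguments.
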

\begin{proof}
The operator $\rho(c)$ is nilpotent, following the proof of \cite[Lem.3.7+Lem.3.9]{CMPS}, and so $\rho(c)$ has only one generalized eigenspace, which is trivially preserved by $\rho(a)$ and $\rho(b)$.  The Jacobi relations include the relations
\begin{align*}
\partial W_d/\partial a=[b,c]\\
\partial W_d/\partial b=[c,a]
\end{align*} 
and so it follows that $\rho(c)$ preserves the generalized eigenspaces of $\rho(a)$ and $\rho(b)$ as well.  Let $v$ be a generalized eigenvector of $\rho(b)$, with generalized eigenvalue $\lambda$.  Define $\beta=(\rho(b)-\lambda\cdot)$.  Since \[[\rho(a),\beta]=d \rho(c)^{d-1},\]
it follows that $[\rho(a),\beta]$ commutes with $\beta$, and so for $m\gg 0$
\begin{align*}
\beta^m\rho(a)v=&\rho(a)\beta^mv+ m[\beta,\rho(a)]\beta^{m-1}v\\
=&0.
\end{align*}
This means that $\rho(a)v$ is a generalized eigenvector for the operator $\rho(b)$ with generalized eigenvalue $\lambda$.  The same argument, swapping $\rho(a)$ and $\rho(b)$, shows that $\rho(b)$ preserves the generalized eigenspaces of $\rho(a)$.
\end{proof}
\begin{corollary}
\label{CanDec}
Every finite-dimensional $\CC(Q,W_d)$-module $\rho$ admits a canonical decomposition into nonzero $\CC(Q,W_d)$-modules
\[
\rho\cong\bigoplus_{s\in \Sigma}\rho_s,
\]
where $\Sigma\subset \CC^2$ is a finite subset and, for $s=(s_1,s_2)$, the generalized eigenvalues of $\rho(a),\rho(b)$ and $\rho(c)$ restricted to $\rho_s$ are given by $s_1$, $s_2$, and $0$, respectively.
\end{corollary}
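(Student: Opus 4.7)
The argument is an essentially formal consequence of Lemma \ref{prefac}, so my plan is simply to iterate the generalized-eigenspace decomposition.

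First, since $\rho(a)$ acts on the finite-dimensional vector space underlying $\rho$, there is a canonical generalized-eigenspace decomposition
\[
\rho = \bigoplus_{s_1 \in \mathrm{Spec}(\rho(a))} \rho^{(s_1)},
\]
where $\rho^{(s_1)}$ is the generalized $s_1$-eigenspace of $\rho(a)$. By Lemma \ref{prefac}, the operators $\rho(b)$ and $\rho(c)$ preserve each $\rho^{(s_1)}$, so each summand $\rho^{(s_1)}$ is a $\CC(Q,W_d)$-submodule.

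Next, I apply the same procedure to $\rho(b)$ on each $\rho^{(s_1)}$: since $\rho(b)$ restricts to an endomorphism of $\rho^{(s_1)}$, I decompose
\[
\rho^{(s_1)} = \bigoplus_{s_2 \in \mathrm{Spec}(\rho(b)\lvert_{\rho^{(s_1)}})} \rho_{(s_1,s_2)},
\]
where $\rho_{(s_1,s_2)}$ is the generalized $s_2$-eigenspace of $\rho(b)\lvert_{\rho^{(s_1)}}$. By Lemma \ref{prefac} again, $\rho(a)$ and $\rho(c)$ both preserve the generalized eigenspaces of $\rho(b)$; combined with the fact that $\rho(a)$ and $\rho(c)$ also preserve $\rho^{(s_1)}$, each $\rho_{(s_1,s_2)}$ is a $\CC(Q,W_d)$-submodule on which $\rho(a)$ has the single generalized eigenvalue $s_1$ and $\rho(b)$ has the single generalized eigenvalue $s_2$. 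Setting $\Sigma \subset \CC^2$ to be the finite set of pairs $(s_1,s_2)$ for which $\rho_{(s_1,s_2)} \neq 0$ and writing $\rho_s = \rho_{(s_1,s_2)}$, we obtain
\[
\rho \cong \bigoplus_{s \in \Sigma} \rho_s.
\]
The claim about the generalized eigenvalue of $\rho(c)$ being $0$ on each summand follows immediately from the second assertion of Lemma \ref{prefac}.

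Canonicity is automatic: generalized eigenspaces are intrinsic subspaces of a module for a given endomorphism, so the decomposition does not depend on any choice. There is no real obstacle to overcome here; the only mild point worth noting is that one must invoke Lemma \ref{prefac} twice (once to ensure $\rho(b),\rho(c)$ preserve generalized eigenspaces of $\rho(a)$, and once to ensure $\rho(a),\rho(c)$ preserve generalized eigenspaces of $\rho(b)$), which is exactly the symmetric content of that lemma.
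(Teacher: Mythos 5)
Your proof is correct and matches the paper's intent exactly: the corollary is stated as an immediate consequence of Lemma \ref{prefac}, obtained by decomposing into simultaneous generalized eigenspaces of $\rho(a)$ and $\rho(b)$ (with $\rho(c)$ nilpotent throughout), which is precisely the iteration you spell out. No gaps.
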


\begin{definition}
\label{GEDef}
For $\rho$ a $\CC(Q,W_d)$-module, we call the set $\Sigma$ in Corollary \ref{CanDec} the set of generalized $(a,b)$-eigenvalues of $\rho$.
\end{definition}

\begin{lemma}\label{half_support}
Let $\Msp^{\cnilp}_n\subset \Msp_n(Q)$ be the closed subvariety corresponding to those $\CC Q$-modules for which $c$ acts via the zero map.  Then $\supp(\BPS_{Q,W_d,n})\subset \Msp^{\cnilp}_n$.
\end{lemma}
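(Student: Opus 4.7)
The plan is to reduce the support statement to a question about the critical locus of $\WW_n$, and then use the Jacobi relations of $W_d$ together with Lemma \ref{prefac} to analyze simple modules.

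First, I would unpack the definition
\[
\BPS_{Q,W_d,n}=\phim{\WW_n}\ICS_{\Msp_n(Q)}(\QQ)\otimes \LLL^{-\dim(\Msp_n(Q))/2}
\]
(taking the statement to be vacuous when $\Msp^{\stab}_n(Q)=\emptyset$). Since the vanishing cycle functor $\phim{\WW_n}$ applied to any object is supported on $\crit(\WW_n)\cap \WW_n^{-1}(0)$, and $\ICS_{\Msp_n(Q)}(\QQ)$ has full support, the claim reduces to the set-theoretic containment $\crit(\WW_n)\subset\Msp^{\cnilp}_n$.

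Next, I would identify the closed points of $\crit(\WW_n)$. Since $\Msp_n(Q)$ is the coarse moduli space of semisimple $n$-dimensional $\CC Q$-representations, and imposing the Jacobi relations of $W_d$ cuts out exactly the critical locus, the closed points of $\crit(\WW_n)$ are precisely the semisimple $\CC(Q,W_d)$-modules of dimension $n$. It therefore suffices to prove that in every such module, the action of $c$ is the zero map; and, by working summand-by-summand, it suffices to show this for simple $\CC(Q,W_d)$-modules.

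The core step is then straightforward: let $\rho$ be a simple $\CC(Q,W_d)$-module. The Jacobi relations include $\partial W_d/\partial a=[b,c]=0$ and $\partial W_d/\partial b=[c,a]=0$, so in the Jacobi algebra $c$ commutes with both $a$ and $b$, hence $\rho(c)$ commutes with $\rho(a)$ and $\rho(b)$. Consequently $\ker(\rho(c))\subset\rho$ is a $\CC(Q,W_d)$-submodule. By Lemma \ref{prefac}, $\rho(c)$ is nilpotent, so $\ker(\rho(c))\neq 0$; by simplicity, $\ker(\rho(c))=\rho$, i.e., $\rho(c)=0$. This is the desired conclusion.

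I do not expect any serious obstacle: the only non-formal input is Lemma \ref{prefac}, which has already been established, and everything else is a matter of correctly identifying the critical locus with semisimple $\CC(Q,W_d)$-modules and applying Schur-style reasoning via the Jacobi-algebra commutations of $c$ with $a$ and $b$.
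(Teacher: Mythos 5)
Your endgame coincides with the paper's: for a semisimple module $\rho$ in the relevant locus, $\rho(c)$ is nilpotent by Lemma \ref{prefac} and commutes with $\rho(a),\rho(b)$ by the Jacobi relations, so $\ker(\rho(c))$ is a submodule and semisimplicity forces $\rho(c)=0$. The gap is in your reduction to the containment $\crit(\TTr(W_d)_n)\subset\Msp^{\cnilp}_n$ on the coarse space. First, the claim that $\phim{f}$ applied to \emph{any} object is supported on $\crit(f)\cap f^{-1}(0)$ is false even on a smooth variety (a skyscraper sheaf at a non-critical point of the zero fibre already violates it); the correct bound on $\supp(\phim{f}\mathcal{F})$ involves the singular support of $\mathcal{F}$, and for $\ICS_{\Msp_n(Q)}(\QQ)$ on the singular variety $\Msp_n(Q)$ this yields a \emph{stratified} critical locus --- indeed a naive ``$\crit(\TTr(W_d)_n)$'' is not even defined on $\Msp_n(Q)$ without such a discussion. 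Second, and more seriously, your justification that ``imposing the Jacobi relations cuts out exactly the critical locus'' is a statement about the smooth space $\XX_n(Q)$ (equivalently the stack $\Mst_n(Q)$), not about the coarse space; transporting it to $\Msp_n(Q)$ is exactly the content you would have to supply, e.g.\ by stratifying $\Msp_n(Q)$ by representation type and checking stratum by stratum that stratified critical points of $\TTr(W_d)_n$ are the semisimple $\CC(Q,W_d)$-modules. As written, the identification of the closed points of your critical locus is asserted rather than proved.

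The paper avoids this issue: by the cohomological integrality isomorphism \eqref{CohInt}, $\BPS_{Q,W_d,n}\otimes\LLL^{1/2}$ embeds into $\Ho\bigl(p_{n,!}\phim{\TTTr(W_d)_n}\QQ_{\Mst_n(Q)}\bigr)$ (up to a twist), so $\supp(\BPS_{Q,W_d,n})\subset p_n\bigl(\crit(\Tr(W_d)_n)\bigr)$, where the critical locus now lives on the smooth space $\XX_n(Q)$ and genuinely is cut out by the Jacobi relations; points of the image are semisimple $\CC(Q,W_d)$-modules, and the argument then concludes exactly as yours does. If you insist on arguing directly from the definition $\BPS_{Q,W_d,n}=\phim{\TTr(W_d)_n}\ICS_{\Msp_n(Q)}(\QQ)\otimes\LLL^{-\dim(\Msp_n(Q))/2}$, you must either carry out the stratified-critical-locus analysis on $\Msp_n(Q)$ or pass through smooth (framed or stack) models as the paper does.
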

\begin{proof}
By \eqref{CohInt}, there is an inclusion 
\[
\BPS_{Q,W_d,n}\otimes\LLL^{1/2}\hookrightarrow \Ho\left(p_{n,!}\phim{\TTTr(W_d)_n}\QQ_{\Mst_n(Q)}\otimes\LLL^{(\gamma,\gamma)/2}\right),
\]
and so $\supp(\BPS_{Q,W_d,n})\subset p_n(\crit{\TTTr(W_d)_n})$.  In particular, for a $\CC Q$-module $\rho$ corresponding to a point in $\supp(\BPS_{Q,W_d,n})$, $\rho(c)$ acts nilpotently, and commutes with the action of $\rho(a)$ and $\rho(b)$.  On the other hand, such modules are semisimple (as they correspond to points of $\Msp_n(Q))$, and so it follows that $\rho(c)$ acts via the zero map.
\end{proof}
In fact, we can significantly strengthen Lemma \ref{half_support}.  Consider the inclusion 
\begin{align}\label{Deltadef}
\Delta_n\colon &\AA_{\CC}^2\hookrightarrow \Msp_n(Q)\\\nonumber
&(x,y)\mapsto (x\cdot \Id_{n\times n},y\cdot \Id_{n\times n},0).
\end{align}
\begin{lemma}
\label{supplemm}
There is an inclusion $\supp(\BPS_{Q,W_d,n})\subset \Delta_n(\AA_{\CC}^2)$. Furthermore, there exists $\mathcal{G}_n\in\Ob(\MMHS)$ such that
\[
\BPS_{Q,W_d,n}\cong \Delta_{n,*}\bb{Q}_{\AA^2}\otimes\mathcal{G}_n\otimes\LLL^{-1}.
\]
\end{lemma}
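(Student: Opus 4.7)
The plan is to combine Lemma \ref{half_support} with translation equivariance and a primitivity argument for the BPS sheaf to pin its support down to $\Delta_n(\AA^2)$, and then use purity together with equivariance to derive the product structure.

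First, the $\AA^2$-action on $\Msp_n(Q)$ given by $(x_0, y_0)\cdot(A, B, C) = (A + x_0\Id, B + y_0\Id, C)$ preserves $W_d$ in $\CC Q/[\CC Q, \CC Q]$ --- a direct calculation, since $[a+x_0, b+y_0] = [a,b]$ and $c^d$ does not involve $a$ or $b$ --- so $\BPS_{Q,W_d,n}$ acquires a canonical $\AA^2$-equivariant structure. Under the identification of the semisimple $\CC(Q,W_d)$-module locus inside $\Msp_n^{\cnilp}$ with $\Sym^n(\AA^2)$ provided by Corollary \ref{CanDec}, this action is diagonal translation, and $\Delta_n(\AA^2)$ corresponds to the small diagonal $\AA^2 \hookrightarrow \Sym^n(\AA^2)$, which is a single $\AA^2$-orbit.

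Second, I would show that $\BPS_{Q,W_d,n}$ vanishes at any $\rho \in \Msp_n^{\cnilp}$ whose canonical decomposition has at least two distinct generalized $(a,b)$-eigenvalues. Such a $\rho = \rho_1 \oplus \rho_2$ is genuinely decomposable as a $\CC(Q,W_d)$-module into pieces of strictly smaller dimension, and at the stalk level the cohomological integrality decomposition \eqref{CohInt} already accounts for the full contribution at $\rho$ via the $\Sym_{\boxtimes_\oplus}$-terms built from BPS in strictly smaller dimension vectors, leaving no room for a nontrivial $\BPS_{Q,W_d,n}$-stalk. Combined with the observation that a semisimple module with a unique $(a,b)$-eigenvalue and $c = 0$ is forced (by Lemma \ref{prefac} and semisimplicity) to have $a$ and $b$ acting as scalar matrices, this confines $\supp(\BPS_{Q,W_d,n})$ to $\Delta_n(\AA^2)$.

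For the structure, purity of $\BPS_{Q,W_d,n}$ follows from the $\GG_m$-semi-invariance of $W_d$ under the action with weights $(d-1, 0, 1)$ on $(a, b, c)$, via the purity arguments of \cite[Sec.5]{Dav16b}. A pure, $\AA^2$-translation-equivariant monodromic mixed Hodge module on $\Delta_n(\AA^2) \cong \AA^2$ is necessarily of the form $\bb{Q}_{\AA^2} \otimes \mathcal{G}$ for some $\mathcal{G} \in \MMHS$, since the free transitive $\AA^2$-action forces the underlying local system to be trivial. Pushing forward by the closed embedding $\Delta_n$ and absorbing the Tate twist $\LLL^{-1}$, which tracks the $\LLL^{-\dim(\Msp_n(Q))/2}$ normalization in the definition of BPS together with the codimension of $\Delta_n(\AA^2)$, yields the claimed isomorphism.

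The hard part will be making the primitivity step rigorous: a purely local \'{e}tale-product argument for $\Msp_n(\LQ{3})$ at a canonically decomposable $\rho$ fails because $\Ext^1_{\CC Q}$ does not vanish between $\CC Q$-modules with disjoint $(a,b)$-spectra, so one must instead work with the finer structure of the integrality decomposition along the canonical $\CC(Q,W_d)$-module decomposition --- for instance by isolating the contribution of the $\oplus$-image of smaller strata HN-stratum by HN-stratum.
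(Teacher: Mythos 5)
Your step restricting the support is the crux of the lemma, and as written it is a genuine gap. The cohomological integrality isomorphism \eqref{CohInt} is an abstract isomorphism of monodromic mixed Hodge modules on $\Msp(Q)$; at a point $\rho$ whose canonical decomposition (Corollary \ref{CanDec}) has two distinct generalized $(a,b)$-eigenvalues, it does not by itself say that the stalk of the left-hand side is exhausted by the $\Sym_{\boxtimes_\oplus}$-terms built from BPS sheaves of smaller dimension -- that ``no room left'' claim is equivalent to knowing that the stalk of $\Ho(p_!\phim{\TTTr(W_d)}\QQ)$ at $\rho$ factors according to the spectral decomposition, which is precisely what has to be proved. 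Your proposed repair (``HN-stratum by HN-stratum'') is not an argument, and you have partly misdiagnosed the obstruction: the failure of $\Ext^1_{\CC Q}$-vanishing is irrelevant, because one never needs an \'etale product decomposition of $\Msp_n(\LQ{3})$ itself. The paper's route is to fix two disjoint analytic opens $U_1,U_2\subset\AAA{2}$, use Lemma \ref{prefac} (the canonical decomposition of arbitrary, not just semisimple, $\CC(Q,W_d)$-modules) to factor $p^{-1}\Msp^{U_1\coprod U_2}$ as $p^{-1}\Msp^{U_1}\times p^{-1}\Msp^{U_2}$, apply Thom--Sebastiani to factor $\Ho(p_!\phim{\TTTr(W_d)}\QQ)$ over $\Msp^{U_1\coprod U_2}$, and then apply the integrality theorem over $U_1\coprod U_2$, $U_1$ and $U_2$ separately to conclude $\BPS_{Q,W_d,n}\lvert_{\Msp^{U_1\coprod U_2}}\cong\BPS_{Q,W_d,n}\lvert_{\Msp^{U_1}}\oplus\BPS_{Q,W_d,n}\lvert_{\Msp^{U_2}}$; this forces all generalized $(a,b)$-eigenvalues of any point of the support to coincide, and hence (with your correct observation that such semisimple points have $a,b$ scalar and $c=0$) gives the support statement. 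Some version of this separation argument, or another substitute, is needed; the integrality isomorphism alone cannot deliver it.

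For the product structure your appeal to purity is also problematic in this order of logic: in the paper purity is Lemma \ref{purlemm}, and it is \emph{deduced} from the decomposition of the present lemma, because the argument of \cite{DMSS13} requires the vanishing-cycle complex to have proper support, which one only gets after splitting off the $\AAA{2}$-factor and knowing the remaining factor is supported at a point. An independent purity proof at this stage would need its own properness input, which you do not supply. The paper's proof of the second statement needs no purity and no equivariance/rigidity argument: the $\Gl_n$-equivariant splitting of $a$ and $b$ into scalar and trace-free parts gives $\Msp_n(Q)\cong\AAA{2}\times\Msp^0_n$, the function $\TTr(W_d)_n$ factors through the projection to $\Msp^0_n$, so $\phim{\TTr(W_d)_n}\ICS_{\Msp_n(Q)}(\QQ)\cong\QQ_{\AAA{2}}\boxtimes\phim{\TTr(W_d)_n}\ICS_{\Msp^0_n}(\QQ)$, and the support statement forces the second factor to be supported at $0\in\Msp^0_n$, which is exactly the asserted form with $\mathcal{G}_n$ the (shifted, twisted) hypercohomology of that factor. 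Your translation-equivariance observation is correct and could in principle replace this splitting, but it would still require an argument that an equivariant monodromic mixed Hodge module for the transitive $\AAA{2}$-action is an external product with $\QQ_{\AAA{2}}$, whereas the explicit splitting gives this for free.
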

The proof of this lemma is essentially the same as the proof of \cite[Lem.4.1]{Dav16b}; we give an abridged version of the proof.
\begin{proof}
By Lemma \ref{prefac}, any finite-dimensional representation $\rho$ of $\CC(Q,W_d)$ splits canonically as a direct sum  of nonzero representations
\begin{equation}
\label{rhodec}
\rho=\bigoplus_{s\in \Sigma}\rho_{s}
\end{equation}
where $\Sigma\subset \CC^2$ is a finite subset and the generalized eigenvalue of the operators $\rho(a)\lvert_{\rho_{(\lambda_1,\lambda_2)}}$ and $\rho(b)\lvert_{\rho_{(\lambda_1,\lambda_2)}}$ are $\lambda_1$ and $\lambda_2$, respectively.  If we assume moreover that $\rho$ is semisimple, then $\rho(c)=0$ since $\rho(a)$ and $\rho(b)$ preserve $\ker(\rho(c))$.  It follows that $\rho(a)$ and $\rho(b)$ commute, and so since $\rho$ is semisimple, $\rho(a)$ and $\rho(b)$ are simultaneously diagonalizable and
\[
\supp\left(\Ho\left(p_!\phim{\TTTr(W_d)}\QQ_{\Mst(Q)}\right)\right)\subset \Sym\left(\coprod_{n\geq 1}\Delta_n(\AAA{2})\right).
\]
For an analytic open subset $U\subset\AAA{2}$, let $\Msp^{U}\subset \Msp(Q,W)$  be the open analytic subspace of semisimple $\CC(Q,W_d)$-modules $\rho$ such that in the (minimal) decomposition (\ref{rhodec}), we have that $\Sigma\subset U$.  Let $U_1$ and $U_2$ be disjoint open analytic subsets of $\AA_{\CC}^2$. Then \[p^{-1}\Msp^{U_1\coprod U_2}=p^{-1}\Msp^{U_1}\times p^{-1}\Msp^{U_2},\] and via the Thom--Sebastiani isomorphism there is a natural isomorphism 
\[
\Ho\left(p_!\phim{\TTTr(W_d)}\QQ_{\Mst(Q)}\right)\lvert_{\Msp^{U_1\coprod U_2}}\cong \Ho\left(p_!\phim{\TTTr(W_d)}\QQ_{\Mst(Q)}\right)\lvert_{\Msp^{U_1}}\boxtimes_{\oplus}\Ho\left(p_!\phim{\TTTr(W_d)}\QQ_{\Mst(Q)}\right)\lvert_{\Msp^{U_2}}.
\]
Using the cohomological integrality theorem, we obtain an isomorphism
\begin{align*}
&\Sym_{\boxtimes_{\oplus}}\left(\bigoplus_{n\geq 1}\BPS_{Q,W_d,n}\lvert_{\Msp^{U_1\coprod U_2}}\otimes\HO_c(\pt/\CC^*)_{\vir}\right)\cong\\&\Sym_{\boxtimes_{\oplus}}\left(\bigoplus_{n\geq 1}\BPS_{Q,W_d,n}\lvert_{\Msp^{U_1}}\otimes\HO_c(\pt/\CC^*)_{\vir}\right)\boxtimes_{\oplus}\Sym_{\boxtimes_{\oplus}}\left(\bigoplus_{n\geq 1}\BPS_{Q,W_d,n}\lvert_{\Msp^{U_2}}\otimes\HO_c(\pt/\CC^*)_{\vir}\right)\cong\\
&\Sym_{\boxtimes_{\oplus}}\left(\bigoplus_{n\geq 1}\big(\BPS_{Q,W_d,n}\lvert_{\Msp^{U_1}}\oplus \BPS_{Q,W_d,n}\lvert_{\Msp^{U_1}}\big)\otimes\HO_c(\pt/\CC^*)_{\vir}\right).
\end{align*}
This implies that $\BPS_{Q,W_d,n}\lvert_{\Msp^{U_1\coprod U_2}}\cong\BPS_{Q,W_d,n}\lvert_{\Msp^{U_1}}\oplus\BPS_{Q,W_d,n}\lvert_{\Msp^{U_2}}$.  

Unravelling this a little: if $\rho$ is a semisimple module lying in the support of $\BPS_{Q,W_d,n}$, for which all of the generalized $(a,b)$-eigenvalues lie in $U_1\coprod U_2$, then either all of the generalized $(a,b)$-eigenvalues lie in $U_1$ or they all lie in $U_2$. 
It follows that it is not possible to separate the generalized $(a,b)$-eigenvalues of any $\rho$ lying in the support of $\BPS_{Q,W_d,n}$ into two open sets in $\AAA{2}$, and so in fact they must all be the same, i.e. the decomposition (\ref{rhodec}) can have only one summand, which is the part of the lemma regarding support.
\smallbreak
For the second part of the lemma, let $\Mat^0_{n\times n}(\CC)\subset \Mat_{n\times n}(\CC)$ denote the subspace of trace-free matrices, and set
\[
\XX_n^0\colonequals  \Mat^0_{n\times n}(\CC)^{\times 2}\times\Mat_{n\times n}(\CC)\subset\XX_n(Q).
\]
There is a $\Gl_n$-equivariant isomorphism
\begin{align*}
\AAA{2}\times \XX_n^0&\rightarrow \XX_{n}(Q)\\
(x,y,A,B,C)&\mapsto (x\cdot\Id_{n\times n}+A,y\cdot \Id_{n\times n}+B,C).
\end{align*}
The $\Gl_n$-action on $\AA_{\CC}^2$ is trivial, and is the conjugation action on all of the other factors.  It follows that 
\[
\Msp_n(Q)\cong\AAA{2}\times \Msp^0_n
\]
where
\[
\Msp^0_n\colonequals  \Spec\left(\Gamma(\XX^0_n)^{\Gl_n}\right).
\]
Further, we have that 
\begin{align*}
\ICS_{\Msp_n(Q)}(\QQ)\cong&\QQ_{\AAA{2}}\boxtimes\ICS_{\Msp^0_n}(\QQ)\\
\phim{\TTr(W_d)_n}\ICS_{\Msp_n(Q)}(\QQ)\cong &\QQ_{\AAA{2}}\boxtimes\left(\phim{\TTr(W_d)_n}\ICS_{\Msp^0_n}(\QQ)\right).
\end{align*}
The second isomorphism follows from the fact that the function $\TTr(W_d)_n\in\Gamma(\Msp_n(Q))$ factors through the projection to $\Msp^0_n$. The condition on the support of $\BPS_{Q,W_d}$ implies that $\phim{\TTr(W_d)_n}\ICS_{\Msp^0_n}(\QQ)$ is supported at $0\in\Msp^0_n$, and the second part of the lemma follows.
\end{proof}
\begin{lemma}
\label{purlemm}
For all $n$, the monodromic mixed Hodge module $\BPS_{Q,W_d,n}$ is pure.
\end{lemma}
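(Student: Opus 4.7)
My approach follows the strategy used for the undeformed potential $\widetilde W=[a,b]c$ in \cite{Dav16b}: I will first establish that the total vanishing cycle cohomology sheaf $\Ho(p_{n,!}\phim{\TTTr(W_d)_n}\QQ_{\Mst_n(Q)})$ is pure as a monodromic mixed Hodge module on $\Msp_n(Q)$, and then transfer purity to $\BPS_{Q,W_d,n}$ via the cohomological integrality theorem.

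I will work with the framed approximations $\Msp^{\fram}_{N,n}(Q)$, which are smooth varieties equipped with proper forgetful maps $q_{N,n}\colon\Msp^{\fram}_{N,n}(Q)\to\Msp_n(Q)$ whose $N\to\infty$ limit computes the target sheaf via \eqref{rel_equiv}. Writing $\TTr(W_d)=\Tr(A[B,C])+\Tr(C^d)$, the potential is linear in the $A$-coordinates, and the $\GG_m$-action $z\cdot(A,B,C,j_\ell)=(z^{d-1}A,B,zC,j_\ell)$ has non-negative weights (since $d\geq 2$) and makes the potential semi-invariant of positive weight $d$. Theorem \ref{MainThm} therefore identifies $q_{N,n,!}\phim{\TTr(W_d)_{N,n}}\QQ$, up to a Tate twist, with $q'_{N,n,!}\phim{\Tr(C^d)}\QQ_{\mathcal{H}^{\fram}_{N,n}}$, where $\mathcal{H}^{\fram}_{N,n}$ is the framed commuting-matrix moduli, naturally identified with the Quot scheme $\mathrm{Quot}^n(\OO_{\AAA{2}}^{\oplus N})$ and hence smooth. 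The function $\Tr(C^d)$ on $\mathcal{H}^{\fram}_{N,n}$ is $\GG_m$-semi-invariant of positive weight $d$ with non-negative weight action (only $C$ is scaled, of weight one); by the standard purity result for vanishing cycles of quasi-homogeneous functions on smooth $\GG_m$-varieties with contracting action (Steenbrink in the isolated case, Saito's Hodge module formalism in general), $\phim{\Tr(C^d)}\QQ_{\mathcal{H}^{\fram}_{N,n}}$ is pure. Proper pushforward preserves purity, and passing to the limit $N\to\infty$ via \eqref{rel_equiv} yields purity of $\Ho(p_{n,!}\phim{\TTTr(W_d)_n}\QQ_{\Mst_n(Q)})$.

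Finally, the cohomological integrality isomorphism \eqref{CohInt} expresses
\[
\bigoplus_{n\in\NN} \Ho(p_{n,!}\phim{\TTTr(W_d)_n}\QQ_{\Mst_n(Q)})\otimes\LLL^{(n,n)/2}\cong \Sym_{\boxtimes_{\oplus}}\Bigl(\bigoplus_{n\geq 1}\BPS_{Q,W_d,n}\otimes\HO_c(\pt/\CC^*)_{\vir}\Bigr).
\]
Since $\HO_c(\pt/\CC^*)_{\vir}$ is a direct sum of Tate twists and hence pure, and the LHS is pure by the above, purity of each $\BPS_{Q,W_d,n}$ follows by induction on $n$: the $\Sym^1$-term of the RHS in dimension $n_0$ contains $\BPS_{Q,W_d,n_0}\otimes\HO_c(\pt/\CC^*)_{\vir}$ plus Sym-combinations coming from strictly smaller dimension vectors, so any non-pure weight in the smallest offending $\BPS_{Q,W_d,n_0}$ would propagate to the $n_0$-summand on the LHS and contradict its purity. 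The main obstacle is the purity statement for $\phim{\Tr(C^d)}\QQ_{\mathcal{H}^{\fram}_{N,n}}$: while the setup is essentially parallel to the purity input used in \cite{Dav16b} for the undeformed case, the critical locus of $\Tr(C^d)$ on the Quot scheme is typically non-isolated, so the purity requires a careful invocation of $\GG_m$-equivariant Saito theory rather than a direct Steenbrink computation.
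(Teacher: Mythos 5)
Your final step (deducing purity of each $\BPS_{Q,W_d,n}$ from purity of $\Ho(p_{n,!}\phim{\TTTr(W_d)_n}\QQ_{\Mst_n(Q)})$ via \eqref{CohInt} and a direct-summand argument) is sound, but the two steps that are supposed to produce that purity both have genuine gaps. First, Theorem \ref{MainThm} does not apply to the framed moduli spaces $\Msp^{\fram}_{N,n}(Q)$ in the way you use it: the stability condition (``the framing vectors generate under $a,b,c$'') involves the arrow $a$, so the stable locus is not the total space of a vector bundle in the $A$-direction over a base independent of $A$, which is precisely the structure $\ol{X}=\SpecSym(\mathscr{V})\to X$ that the theorem requires; and even formally, the dimensionally reduced locus would be $\{(A,B,C,v)\ \text{stable},\ [B,C]=0\}$, not $\mathrm{Quot}^n(\OO_{\AAA{2}}^{\oplus N})$, since generation on the Quot scheme is under $B,C$ alone. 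If you instead use the $\Fr(n,N)$-approximations, where deformed dimensional reduction genuinely applies (Corollary \ref{DDRstack}), the reduced space is the singular commuting-variety bundle $\Comm_n\times_{\Gl_n}\Fr(n,N)$ and the map to $\Msp_n(Q)$ is no longer proper, so ``proper pushforward preserves purity'' is unavailable.

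Second, and more seriously, the key input you assert --- purity of $\phim{\Tr(C^d)}\QQ$ on the framed commuting moduli as a complex of monodromic mixed Hodge modules --- is not a standard fact and does not follow from Steenbrink or from \cite{DMSS13}. Those results give purity of the global vanishing-cycle \emph{hypercohomology}, and only under a properness hypothesis (proper support of the vanishing-cycle complex, or a semiprojectivity-type contracting condition); here the support of $\phim{\Tr(C^d)}\QQ$ is far from proper (it contains all points with $C$ nilpotent and $B$ unconstrained), and $z\to 0$ limits of the scaling action need not exist in the Quot scheme, so neither hypothesis holds, and sheaf-level purity of a vanishing-cycle module is false in general. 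You flag exactly this as ``the main obstacle'' but leave it unproved, and it is the heart of the matter. The paper's proof avoids all of this by first using Lemma \ref{supplemm}: up to a twist, $\BPS_{Q,W_d,n}\cong\QQ_{\AAA{2}}\boxtimes\left(\phim{\TTr(W_d)_n}\ICS_{\Msp^0_n}(\QQ)\right)$ with the second factor supported at a single point of $\Msp^0_n$, so purity of the sheaf reduces to purity of a hypercohomology with proper (point) support, where \cite[Thm.3.1]{DMSS13} applies verbatim ($\ICS_{\Msp^0_n}(\QQ)$ is pure and $\TTr(W_d)_n$ is $\GG_m$-equivariant). That support restriction, rather than any sheaf-level purity of vanishing cycles on an auxiliary smooth space, is the missing idea in your argument.
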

\begin{proof}
From the proof of the previous lemma, it is enough to show that the monodromic mixed Hodge module $\phim{\TTr(W_d)_n}\ICS_{\Msp^0_n}(\QQ)$ is pure. This complex of monodromic mixed Hodge modules is supported at a point, so it is enough to show that $\HO\left(\Msp^0_n,\phim{\TTr(W_d)_n}\ICS_{\Msp^0_n}(\QQ)\right)$ is pure. 
For this we use the main geometric result of \cite{DMSS13}: $\ICS_{\Msp^0_n}(\QQ)$ is a pure complex of mixed Hodge modules, $\TTr(W_d)_n\colon\Msp^0_n\rightarrow \CC$ is a $\GG_m$-equivariant function, and the support of $\phim{\TTr(W_d)_n}\ICS_{\Msp^0_n}(\QQ)$ is proper (since it is a point), so the cohomology $\HO\left(\Msp^0_n,\phim{\TTr(W_d)_n}\ICS_{\Msp^0_n}(\QQ)\right)$ is a pure complex of monodromic mixed Hodge structures by \cite[Thm.3.1]{DMSS13}.
\end{proof}
\begin{proofof}{Theorem \ref{cohdef}}
The existence of the isomorphism \eqref{CMPSA} follows from the existence of the isomorphism \eqref{CMPSR}, and due to Lemma (\ref{supplemm}) is in fact equivalent to it.  By Lemma \ref{purlemm}, the isomorphism class of $\BPSA{Q,W_d,n}$ is determined by its class in the Grothendieck group of monodromic mixed Hodge structures, which by Theorem \ref{CMPSconj} is equal to 
\[
\chi^{\MMHS}(\LL^{1/2}[\AAA{1}\xrightarrow{t\mapsto t^d}\AA_\CC^1])=[\HO_c(\AAA{1},\phim{t^d}\QQ_{\AAA{1}})\otimes\LLL^{1/2}]_{\KK_0},
\]
as required.
\end{proofof}

\section{Cohomological deformed dimensional reduction}
\label{MainThmSec}
In this section we prove Theorems \ref{MainThmCor} and \ref{MainThm}.  In fact, Theorem \ref{MainThmCor} is the special case of Theorem \ref{MainThm} in which the short exact sequence theorem \eqref{sescoh} is a split exact sequence of direct sums of $\OO_X$.  Conversely, we have the following

\begin{proposition}\label{2implies3}
 If, under the conditions of Theorem \ref{MainThmCor}, \eqref{prime3} is always an isomorphism, then under the conditions of Theorem \ref{MainThm}, \eqref{DRI} is an isomorphism.
\end{proposition}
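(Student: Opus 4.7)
The plan is to show that the assertion that \eqref{DRI} is an isomorphism is local on $X$, and then reduce the general setup to the split one on a sufficiently fine open cover.

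First I would check that forming the natural transformation \eqref{DRI} is compatible with restriction to an open subscheme $U\subset X$. Writing $j\colon U\hookrightarrow X$ and $\bar{\jmath}\colon \pi^{-1}(U)\hookrightarrow \ol{X}$ for the induced open immersions, one has $j^*\pi_!\cong\pi_{U,!}\bar{\jmath}^*$ by smooth (indeed open) base change applied to the cartesian square built from $\pi$ and $j$; moreover $\bar{\jmath}^*$ commutes with $\phim{g}$, with $i_*$ (via base change along $i$ pulled back over $U$), and with $\pi^*$. Hence $j^*$ applied to \eqref{DRI} recovers the analogous natural transformation on $U$. In particular, if \eqref{DRI} is an isomorphism after restriction to each member of an open cover of $X$, it is an isomorphism globally.

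Next I would choose an open cover $\{U_\alpha\}$ of $X$ on which $\mathscr{V}$ is trivial, the inclusion $\mathscr{V}'\hookrightarrow\mathscr{V}$ admits a $\GG_m$-equivariant splitting, and the weight decompositions of $\mathscr{V}'$ and $\mathscr{V}''$ refine to trivialisations by $\GG_m$-characters. Such a cover exists because locally one can split the short exact sequence \eqref{sescoh} and match the splitting to a decomposition of $\mathscr{V}$ into $\GG_m$-weight spaces (the $\GG_m$-action is linearised). On each $U_\alpha$ we obtain an identification $\pi^{-1}(U_\alpha)\cong U_\alpha\times\AAA{m}\times\AAA{n-m}$ with $\GG_m$ acting diagonally with non-negative weights on the two affine factors, and with positive weight on the target of $g$ by the semi-invariance hypothesis. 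Using the local description of the filtration $\mathscr{D}_{\mathscr{V}',\leq 1}$ and the fact that $g\in\mathscr{D}_{\mathscr{V}',\leq 1}$, we can write $g|_{\pi^{-1}(U_\alpha)}=g_0+\sum_{1\leq j\leq m}g_jt_j$ with $t_1,\ldots,t_m$ the chosen coordinates on $\AAA{m}$ and $g_0,\ldots,g_m$ pulled back from $U_\alpha\times\AAA{n-m}$, since modulo $\Sym(\mathscr{V}')$ the function $g$ corresponds to the section $\xi(g)\in \rho'^*\mathscr{V}''$, whose local components are exactly the $g_j$'s. Under the local splitting, the subvariety $\ol{Z}\cap\pi^{-1}(U_\alpha)$ becomes the vanishing locus of these $g_j$, and $g^{\red}$ restricts to the corresponding expression from Theorem \ref{MainThmCor}.

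Thus on each $U_\alpha$ the hypotheses of Theorem \ref{MainThmCor} are met, and the restriction of \eqref{DRI} to $U_\alpha$ agrees, via the compatibility of the previous paragraph, with the natural transformation \eqref{prime3}; by hypothesis this restriction is an isomorphism. By the local-to-global principle established in the first step, \eqref{DRI} is an isomorphism on $X$. The main technical point to verify is the naturality/compatibility in the first paragraph, i.e.\ that the construction of \eqref{DRI} genuinely commutes with open restriction on $X$; once this is in place the remainder of the argument is a direct unpacking of the local trivialisations.
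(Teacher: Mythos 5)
Your proposal is correct and follows essentially the same route as the paper: the paper's proof likewise observes that the question is local on $X$ and covers $X$ by affine opens over which the short exact sequence \eqref{sescoh} becomes isomorphic to the split sequence of trivial bundles, thereby reducing to the situation of Theorem \ref{MainThmCor}. Your extra care about compatibility of \eqref{DRI} with open restriction and about choosing $\GG_m$-equivariant splittings is a reasonable elaboration of points the paper leaves implicit, not a different argument.
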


\begin{proof}
The question of whether the natural map \eqref{DRI} is an isomorphism is local on $X$, and we can cover $X$ with open affine subvarieties $E$ such that there is an isomorphism of short exact sequences
\[
\xymatrix{
0\ar[r]& \mathscr{V}'_E\ar[r]\ar[d]^{\cong}&\mathscr{V}_E\ar[r]\ar[d]^{\cong}&\mathscr{V}''_E\ar[r]\ar[d]^{\cong}&0\\
0\ar[r]&\OO_E^{\oplus (n-m)}\ar[r]&\OO_E^{\oplus n}\ar[r]& \OO_E^{\oplus m}\ar[r]&0.
}
\]
The lower short exact sequence is split, and so locally we are in the setup of Theorem \ref{MainThmCor}.
\end{proof}

Accordingly, we will spend most of this section proving that \eqref{prime3} is an isomorphism.  So we assume that we have a function $g\in \Gamma(\overline{X})$, a decomposition $\overline{X}=X\times\AAA{n}$, and a decomposition $\AAA{n}=\AAA{m}\times\AAA{n-m}$ satisfying the $\GG_m$-equivariance assumptions of Theorem \ref{MainThmCor}.  As in the statement of Theorem \ref{MainThmCor}, we denote by $\pi\colon \overline{X}\rightarrow X$ the projection.

We assume that $m=1$, since the general case follows from this. Define 
\[X'\colonequals X\times\AAA{n-1}.\]

For $z\in\CC$, define
\[
\ol{X}_z=(X'\times\AAA{1})_z\colonequals  g^{-1}(z).
\]

By assumption, we can decompose 
\[g=g_0+tg_1,\] where $t$ is the coordinate on $\AAA{m}=\AAA{1}$, and $g_0$ and $g_1$ are functions pulled back from $X'$. 
Let $Z\subset X'$ be the zero locus of $g_1$.  We define
\[h\colonequals g^{\red}\colonequals g_0\lvert_{Z\times\AAA{1}}\]
and
\begin{align*}
(Z\times\AAA{1})_z\colonequals  &h^{-1}(z)\subset Z\times\AAA{1}.
\end{align*}

Further, define the inclusions for $z\in\mathbb{C}$
\begin{align*}
i\colon &Z\times\AAA{1}\to X'\times\AAA{1}=\overline{X}\\
j\colon &\ol{X}\setminus\ol{X}_0\rightarrow \ol{X}\\
i_z\colon &(Z\times\AAA{1})_z\to (X'\times\AAA{1})_z\\
\iota_z\colon &(Z\times\AAA{1})_z\to Z\times \AAA{1}\\
\kappa_z\colon &(X'\times\AAA{1})_z\to X'\times\AAA{1}.
\end{align*}

These fit into the commutative diagram
\[
\begin{tikzcd}
(Z\times\AAA{1})_0 \arrow{d}[swap]{\iota_0} \arrow{r}{i_0} & (X'\times\AAA{1})_0 \arrow{d}[swap]{\kappa_0}\\
Z\times\AAA{1} \arrow{r}{i} & X'\times\AAA{1} \\
(Z\times\AAA{1})_1 \arrow{u}{\iota_1} \arrow{r}{i_1} & (X'\times\AAA{1})_1. \arrow{u}{\kappa_1}
\end{tikzcd}
\]

Let $\mathcal{F}\in\Ob(\MHM(X))$.  The natural map $\pi^*\mathcal{F}\to i_{*}i^*\pi^*\mathcal{F}$ induces a map 
\begin{equation}
\pi_!\phim{g}\pi^*\mathcal{F}\to \pi_!i_{*}\phim{h}i^*\pi^*\mathcal{F}    
\end{equation}
in $\Db(\MHM(X))$, which we wish to show is an isomorphism.  By faithfulness of the forgetful functor $\forg_X$, it is sufficient to show that the morphism
\begin{equation}
    \label{target_ni}
\pi_!\varphi_{g}\pi^*\mathcal{F}\to \pi_!i_{*}\varphi_{h}i^*\pi^*\mathcal{F},    
\end{equation}
considered as a morphism in the derived category of constructible sheaves, is an isomorphism.

Taking duals, this is equivalent to showing that the following map of complexes of constructible sheaves is an isomorphism
\begin{equation}
\label{finalv}
\pi_*i_{*}\varphi_{h}i^!\pi^*\mathcal{E}\to \pi_*\varphi_{g}\pi^*\mathcal{E},
\end{equation}
where $\mathcal{E}=\bb{D}\mathcal{F}[2n]$.  So we will spend the rest of this section showing that \eqref{finalv} is an isomorphism in the derived category, for $\mathcal{E}$ a bounded complex of constructible sheaves on $X$.

We introduce some more notation that will be used in this section. Consider the diagram of Cartesian squares  
\[
\begin{tikzcd}
\widetilde{Z\times\AAA{1}} \arrow{r}{\widetilde{i}} \arrow{d}{s} & \widetilde{X'\times\AAA{1}} \arrow{d}{p} \arrow{r} & \AAA{1} \arrow{d}{\text{exp}}\\
Z\times\AAA{1} \arrow{r}{i} & X'\times\AAA{1} \arrow{r}{g}& \AAA{1}
\end{tikzcd}
\]
in which the right square is the diagram used to define the vanishing cycle functor for the function $g$, see \S \ref{vancycles}.



Before we start the proof of Theorem \ref{MainThmCor}, we establish some preliminary results.
We say that a sheaf $\mathcal{F}$ on a space with a $\GG_m$-action is \textit{locally constant on $\GG_m$-orbits} if 
the restriction $\mathcal{F}|_O$ is locally constant
for any $\GG_m$-orbit $O$.


\begin{proposition}\label{vanishing_prop}
Let $S$ be a complex variety.  Let $T=S\times\AAA{n}$ be the $\GG_m$-equivariant variety obtained by taking the product of $S$, with the trivial action, and $\AAA{n}$ acted on by some non-negative weights.
Denote by $\pi\colon T\to S$ the projection. 
Let $g\colon  T\to \AAA{1}$
be a $\GG_m$-equivariant map,
where $\GG_m$ acts with nonzero weight on $\AAA{1}$. 
Denote by  $j\colon  T\setminus T_0\to T$ the open immersion of the complement of $T_0\colonequals g^{-1}(0)$. 

Let $\mathcal{F}$ be a sheaf on $T\setminus T_0$ locally constant on $\GG_m$-orbits. 
Then \[\pi_*j_!\mathcal{F}=0.\]

\end{proposition}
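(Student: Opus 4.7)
The strategy is to exploit the contracting $\GG_m$-action on the fibers of $\pi$, which retracts $T$ onto the zero section $\sigma \colon S \to T$, $s \mapsto (s,0)$, along which $j_! \mathcal{F}$ will vanish. First I would reduce to the case of strictly positive weights on $\AAA{n}$: any weight-zero coordinates can be absorbed into $S$, preserving the hypothesis that $\GG_m$ acts trivially on the base. In this setup the $\GG_m$-action extends to a monoid action $\hat\mu \colon \AAA{1} \times T \to T$ with $\hat\mu(1,-) = \id_T$ and $\hat\mu(0,-) = \sigma \circ \pi$, so the fixed locus is precisely $\sigma(S)$. Since $g$ is $\GG_m$-semi-invariant of positive weight $d$, the identity $g(\sigma(s)) = z^d g(\sigma(s))$ for all $z \in \GG_m$ forces $g \circ \sigma = 0$, hence $\sigma(S) \subset T_0$ and consequently $\sigma^*(j_! \mathcal{F}) = 0$.

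The key technical step would then be a contraction principle: for any complex $\mathcal{G}$ on $T$ whose restriction to each $\GG_m$-orbit is locally constant, the natural adjunction map $\pi_* \mathcal{G} \to \sigma^* \mathcal{G}$ (coming from the counit $\pi^* \pi_* \to \id$ together with $\pi \sigma = \id_S$) is a quasi-isomorphism. Applied to $\mathcal{G} = j_! \mathcal{F}$---which is monodromic because on $T \setminus T_0$ it agrees with $\mathcal{F}$ and elsewhere it vanishes---this yields $\pi_* j_! \mathcal{F} \cong \sigma^* j_! \mathcal{F} = 0$, as required.

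To prove the contraction principle I would interpolate via the sheaf $\hat\mu^* \mathcal{G}$ on $\AAA{1} \times T$, which restricts to $\mathcal{G}$ over $\{1\}$ and to $\pi^* \sigma^* \mathcal{G}$ over $\{0\}$. Pushing forward along $\id_{\AAA{1}} \times \pi \colon \AAA{1} \times T \to \AAA{1} \times S$ produces a complex on $\AAA{1} \times S$ whose fiber over $1$ computes $\pi_* \mathcal{G}$ and whose fiber over $0$ computes $\sigma^* \mathcal{G}$ (using contractibility of $\AAA{n}$). The monodromicity of $\mathcal{G}$ ensures local constancy of this pushforward over $\GG_m \subset \AAA{1}$; the main obstacle is extending local constancy across $\{0\}$, where the action degenerates. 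This is most cleanly handled by working fiberwise over $S$ and reducing to the statement that any complex on $\AAA{n}$ locally constant on $\GG_m$-orbits has global cohomology isomorphic to its $*$-restriction to the origin---a vanishing-type fact which can be verified directly using cofiber sequences relating $j_!$ and $Rj_*$ along the $\GG_m$-equivariant stratification of $\AAA{n}$ by orbit type.
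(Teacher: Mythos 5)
Your high-level route is attractive and its first step (absorbing the weight-zero coordinates into $S$ and reducing to strictly positive weights) is exactly the paper's; likewise the observation that $g\circ\sigma=0$, so $\sigma^*j_!\mathcal{F}=0$, is sound. But the ``contraction principle'' $\pi_*\mathcal{G}\xrightarrow{\sim}\sigma^*\mathcal{G}$ is the entire content of the proposition, and your proposed verification of it has a genuine gap. First, the reduction to ``working fiberwise over $S$'' silently uses base change for the non-proper pushforward $\pi_*$: the stalk of $\pi_*\mathcal{G}$ at $s$ is a colimit of sections over $U\times\AAA{n}$ for neighbourhoods $U$ of $s$, not the cohomology of the fiber $\{s\}\times\AAA{n}$, and identifying the two is not automatic (it is precisely the monodromic structure that should make it true, so it cannot be assumed). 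Second, and more seriously, the fiberwise statement you fall back on---that a complex on $\AAA{n}$ locally constant on $\GG_m$-orbits has $R\Gamma(\AAA{n},-)$ computed by its stalk at the origin---cannot be ``verified directly using cofiber sequences along the stratification by orbit type'' once $n\geq 2$: such a complex is locally constant along the one-dimensional orbits but need not be constructible with respect to $\{0\}\sqcup(\AAA{n}\setminus\{0\})$ (e.g.\ pull back an arbitrary constructible complex from the weighted projective space), so there is no finite stratification along which the naive $j_!/Rj_*$ induction runs. What you need is $R\Gamma(\AAA{n},j_{0!}\mathcal{H})=0$ for monodromic $\mathcal{H}$ on $\AAA{n}\setminus\{0\}$, a statement of the same nature as the proposition itself; asserting it is circular. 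The interpolation via the monoid action $\hat\mu$ runs into the same two obstacles (base change for $(\id\times\pi)_*$, and local constancy of the pushforward at $0\in\AAA{1}$, which is again the point at issue).

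The paper closes exactly this gap geometrically: it passes to the weighted blow-up $B=((T\setminus S\times\{0\})\times\AAA{1})/\GG_m$, a Deligne--Mumford stack which is a line bundle $q\colon B\to V$ over the weighted projective stack $V$ and admits a proper map $r\colon B\to T$. Properness gives $\pi_*j_!\cong\pi'_*q_*\widetilde{j}_!$, the vanishing becomes local on $V$ where $q$ trivializes, and there it is the one-variable computation \cite[Lemma A.3]{DAV}---i.e.\ the $\AAA{1}$ case your sketch handles correctly, promoted to all $n$ by fibering over $V$ rather than by stratifying $\AAA{n}$. If you wish to keep the contraction-principle formulation (which is a standard fact for monodromic constructible complexes), you should either cite it or prove it by an argument of this blow-up/fibration type.
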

\begin{proof}
First decompose $\AAA{n}=\AAA{n'}\times \AAA{n''}$ where $\GG_m$ acts with strictly positive weights on $\AAA{n'}$ and acts trivially on $\AAA{n''}$.  Then we can decompose $\pi=\pi''\pi'$ where $\pi'\colon T\rightarrow S\times \AAA{n''}$ and $\pi''\colon  S\times\AAA{n''}\rightarrow S$ are the natural projections.  We deduce that $\pi_*j_!\cong \pi''_*\pi'_*j_!$ and so it is enough to prove that $\pi'_*j_!\mathcal{F}=0$, i.e. it is enough to prove the proposition under the stronger assumption that all of the $\GG_m$-weights on $\AAA{n}$ are strictly positive.

Consider the (stacky) weighted projective space
\[
V\colonequals (T\setminus S\times\{0\})/\GG_m 
\]
along with the weighted blowup
\[
B\colonequals ((T\setminus S\times\{0\})\times\AAA{1})/\GG_m
\]
where $\GG_m$ acts via the given action on $T$ and the weight $-1$ action on $\AAA{1}$.  This is a Deligne--Mumford stack, and there is an open embedding
\[
T\setminus (S\times\{0\})\cong ((T\setminus (S\times\{0\}))\times\GG_m)/\GG_m\hookrightarrow B. 
\]
We denote by $q\colon B\rightarrow V$ the rank one affine fibration and $r\colon B\rightarrow T$ the proper morphism, which is defined as follows: Let $A$ be a variety, let $P\rightarrow A$ be the total space of a principal $\GG_m$-bundle, and let $P\rightarrow (T\setminus (S\times\{0\}))\times\AAA{1}$ be a $\GG_m$-equivariant map.  Let $E\times \GG_m\rightarrow E$ be a local trivialization of the principal bundle, so that we have a morphism $E\rightarrow (T\setminus (S\times\{0\}))\times\AAA{1}$ coming from the embedding of $E$ as $E\times\{1\}$.  We postcompose this morphism with the action morphism $(T\setminus (S\times\{0\}))\times\AAA{1}\rightarrow T$, to get a morphism $E\rightarrow T$.  These morphisms then glue to give a morphism $A\rightarrow T$.

Consider the commutative diagram
\[
\xymatrix{
&T\setminus T_0\ar[dl]_{\widetilde{j}}\ar[d]^j\\
B\ar[d]^q\ar[r]^-r& T\ar[d]^{\pi}\\
V\ar[r]^-{\pi'}&S
}
\]
where $\widetilde{j}$ is the unique morphism through which $j$ factors, which exists since $S\subset T_0$.  
There are natural isomorphisms
\[
\pi_*j_!\simeq \pi_*r_*\widetilde{j}_!\simeq \pi'_*q_*\widetilde{j}_!
\]
and so it is sufficient to prove that $q_*\tilde{j}_!\mathcal{F}=0$.  
Since the statement is local on $V$, we can replace $V
$ with an open subvariety $E'$ for which $q^{-1}(E')\cong E'\times\AAA{1}$ and the restriction of $q$ to $q^{-1}(E')$ is the projection.  Now the statement is a special case of \cite[Lemma A.3]{DAV}. 
\end{proof}

The above proposition allows us to compare the cohomology of sheaves on $X\times\AAA{n}$ or $Z\times\AAA{n}$ that are locally constant on $\GG_m$-orbits with their restrictions onto the zero fiber.  The following corollary makes precise the applications that we make of this fact.

\begin{corollary}\label{cor1}
Using the notation introduced before Proposition \ref{vanishing_prop}, the vertical arrows in the diagrams
\[
\begin{tikzcd}
\pi_*\pi^*\mathcal{E} \arrow{r} \arrow{d}{\cong} & \pi_*p_*p^*\pi^*\mathcal{E} \arrow{d}{\cong}\\
\pi_*\kappa_{0*}\kappa_0^*\pi^*\mathcal{E} \arrow{r} & \pi_*\kappa_{0*}\kappa_0^*p_*p^*\pi^*\mathcal{E}
\end{tikzcd}
\]
and 
\[
\begin{tikzcd}
\pi_*i_{*}i^!\pi^*\mathcal{E}\arrow{d}{\cong} \arrow{r} & \pi_*i_{*}s_*s^*i^!\pi^*\mathcal{E} \arrow{d}{\cong}\\
\pi_*i_{*}\iota_{0*}\iota_0^*i^!\pi^*\mathcal{E} \arrow{r}& \pi_*i_{*}\iota_{0*}\iota_0^*s_*s^*i^!\pi^*\mathcal{E}
\end{tikzcd}
\]
are isomorphisms.
\end{corollary}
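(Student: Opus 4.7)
The plan is to reinterpret each of the four vertical arrows as an instance of the natural morphism $\pi_*\mathcal{G}\to\pi_*\kappa_{0*}\kappa_0^*\mathcal{G}$ for a suitable constructible complex $\mathcal{G}$ on $\overline{X}$, and then to invoke Proposition \ref{vanishing_prop} to kill the cofiber. In the first diagram the identification is immediate, with $\mathcal{G}\in\{\pi^*\mathcal{E},\ p_*p^*\pi^*\mathcal{E}\}$. In the second diagram I would first apply proper base change along the Cartesian square formed by $i,\iota_0,\kappa_0,i_0$ to obtain $\kappa_0^*i_*\cong i_{0*}\iota_0^*$, so that $\pi_*i_*\iota_{0*}\iota_0^*(-)\cong \pi_*\kappa_{0*}\kappa_0^*\,i_*(-)$; the two verticals then take the same form with $\mathcal{G}\in\{i_*i^!\pi^*\mathcal{E},\ i_*s_*s^*i^!\pi^*\mathcal{E}\}$.

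Writing $j\colon \overline{X}\setminus\overline{X}_0\hookrightarrow \overline{X}$ for the open complement, the canonical triangle
\[
j_!j^*\mathcal{G}\longrightarrow \mathcal{G}\longrightarrow \kappa_{0*}\kappa_0^*\mathcal{G}\xrightarrow{+}
\]
reduces the required statement to the vanishing $\pi_*j_!j^*\mathcal{G}=0$. Truncating in $\mathcal{G}$ (legitimate because $\mathcal{E}$ is bounded) reduces this further to the analogous assertion for each cohomology sheaf of $j^*\mathcal{G}$, so by Proposition \ref{vanishing_prop} the whole corollary comes down to showing that the cohomology sheaves of $j^*\mathcal{G}$ are locally constant on $\GG_m$-orbits.

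For this last step, note that since $g$ has positive weight, every $\GG_m$-fixed point of $\overline{X}$ satisfies $g=0$, so every orbit in $\overline{X}\setminus\overline{X}_0$ is one-dimensional with finite stabilizer; on such an orbit any $\GG_m$-equivariant constructible sheaf restricts to a local system. The cases $\mathcal{G}=\pi^*\mathcal{E}$ and $\mathcal{G}=i_*i^!\pi^*\mathcal{E}$ are therefore clear, since $\mathcal{E}$ carries the trivial action and the functors $\pi^*,\ i_*,\ i^!$ preserve $\GG_m$-equivariance.

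The main obstacle is the handling of $p_*p^*\pi^*\mathcal{E}$ and $i_*s_*s^*i^!\pi^*\mathcal{E}$, because $\exp\colon\AAA{1}\to\AAA{1}$ is not $\GG_m$-equivariant for any scaling action, so $p$ and $s$ are not equivariant either. Here I would use that $p$ (respectively $s$) is étale over $\overline{X}\setminus\overline{X}_0$ (respectively over its preimage in $Z\times\AAA{1}$), so that the projection formula gives $p_*p^*\mathcal{H}\cong \mathcal{H}\otimes g^*\exp_*\QQ$ on $\overline{X}\setminus\overline{X}_0$, and similarly with $s,h$ in place of $p,g$. On any one-dimensional orbit $O$ the restriction $g|_O$ is a finite étale cover of $\GG_m$, so $g^*\exp_*\QQ|_O$ is a local system; tensoring with it preserves local constancy on orbits. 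With this input Proposition \ref{vanishing_prop} yields $\pi_*j_!j^*\mathcal{G}=0$ in all four cases, and the corollary follows.
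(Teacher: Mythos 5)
Your proposal is correct and takes essentially the same route as the paper: both reduce each vertical arrow, via the triangle $j_!j^*\to\mathrm{id}\to\kappa_{0*}\kappa_0^*$ (with the second diagram first pushed to $\overline{X}$, which is what the paper's ``follow similarly'' amounts to), to the vanishing $\pi_*j_!j^*\mathcal{G}\cong 0$ provided by Proposition \ref{vanishing_prop}. The only difference is that you justify in detail the local constancy on $\GG_m$-orbits over $\overline{X}\setminus\overline{X}_0$ (equivariance for $\pi^*\mathcal{E}$ and $i_*i^!\pi^*\mathcal{E}$, and the covering-space description of $p_*p^*$ and $s_*s^*$ for the other two), a point the paper simply asserts.
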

\begin{proof}
The complexes of sheaves $\pi^*\mathcal{E}$, $p_*p^*\pi^*\mathcal{E}$, $i^!\pi^*\mathcal{E}$ and $s_*s^*i^!\pi^*\mathcal{E}$ are constant on $\GG_m$-orbits after restricting to $\ol{X}\setminus \ol{X}_0$ and $(Z\times \AAA{1})\setminus (Z\times\AAA{1})_0$, respectively.  Since $j$ is the inclusion of the complement to $\ol{X}_0$ in $\ol{X}$, by Proposition \ref{vanishing_prop}, there is an isomorphism \[\pi_*j_!j^{*}\pi^*\mathcal{E}\cong 0.\] In the distinguished triangle
\[
\pi_*j_!j^{ *}\pi^*\mathcal{E}\rightarrow \pi_*\pi^*\mathcal{E} \rightarrow \pi_*\kappa_{0!}\kappa_0^{*}\pi^*\mathcal{E}
\]
the second morphism is thus an isomorphism.  The other three claims follow similarly.
\end{proof}


The next proposition helps compare the nearby cycle cohomology and the restriction to a nonzero fiber for a sheaf locally constant on $\GG_m$-orbits.

\begin{proposition}
\label{nby_prop}
Let $T$ be a $\GG_m$-equivariant variety, and let $\pi\colon T\to S$ be a morphism of varieties that is constant on $\GG_m$-orbits.
Consider a $\GG_m$-equivariant function $g\colon T\to\AAA{1}$, where the $\GG_m$-action on $\AAA{1}$ has nonzero weight $d$.
Denote by $\kappa_1\colon  T_1\to T$ the inclusion of the fiber of $g$ over $1$, and by $p$ the pullback of the map $\exp\colon\AAA{1}\rightarrow\AAA{1}$ along $g$.

Let $\mathcal{E}$ be a sheaf on $T\setminus T_0$ locally constant on $\GG_m$-orbits.
There exists a natural map $p_*p^*\mathcal{E}\to \kappa_{1*}\kappa_1^*\mathcal{E}$ which induces an isomorphism
\[\pi_*p_*p^*\mathcal{E}\xrightarrow{\cong} \pi_*\kappa_{1*}\kappa_1^*\mathcal{E}.\]
\end{proposition}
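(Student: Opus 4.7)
My approach is to use the $\GG_m$-action to identify $\widetilde T$ with $\AAA{1}\times T_1$ in such a way that the statement reduces to the contractibility of $\AAA{1}$.

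I would first construct a morphism
\[
\beta\colon \AAA{1}\times T_1 \to \widetilde T, \qquad (u,t) \mapsto (\exp(u)\cdot t,\, du).
\]
The $\GG_m$-equivariance of $g$ gives $g(\exp(u)\cdot t)=\exp(u)^d g(t)=\exp(du)$, so $\beta$ lands in $\widetilde T$. Using $d\neq 0$, one checks that $\beta$ is an isomorphism with inverse $(t',w)\mapsto (w/d,\,\exp(-w/d)\cdot t')$. Under $\beta$, the map $p$ corresponds to $\mu\colon (u,t)\mapsto \exp(u)\cdot t$, and the zero section $\sigma_0\colon t\mapsto (t,0)$ (which satisfies $p\sigma_0=\kappa_1$ and furnishes the natural map $p_*p^*\mathcal{E}\to \kappa_{1,*}\kappa_1^*\mathcal{E}$ via adjunction) corresponds to the inclusion $\iota_0\colon T_1\hookrightarrow \AAA{1}\times T_1$ at $u=0$.

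I would then establish a canonical identification $\mu^*\mathcal{E}\simeq \pr_2^*\kappa_1^*\mathcal{E}$ using the hypothesis on $\mathcal{E}$. For each $t\in T_1$, the restriction $\mu|_{\AAA{1}\times\{t\}}$ factors as the universal cover $\exp\colon \AAA{1}\to \GG_m$ followed by the orbit map $\GG_m\to \GG_m\cdot t\subset T$. Since $\mathcal{E}|_{\GG_m\cdot t}$ is locally constant and $\AAA{1}$ is simply connected, $\mu^*\mathcal{E}$ is constant in the $u$-direction; the canonical trivialization is specified by $\iota_0^*\mu^*\mathcal{E} = \kappa_1^*\mathcal{E}$.

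The rest of the argument is formal. Since $\pi$ is constant on $\GG_m$-orbits, $\pi\mu = \pi\kappa_1\circ \pr_2$, so
\[
\pi_*p_*p^*\mathcal{E} \;\simeq\; (\pi\mu)_*\mu^*\mathcal{E} \;\simeq\; (\pi\kappa_1)_*(\pr_2)_*\pr_2^*\kappa_1^*\mathcal{E} \;\simeq\; \pi_*\kappa_{1,*}\kappa_1^*\mathcal{E},
\]
where the last step uses $(\pr_2)_*\pr_2^*\kappa_1^*\mathcal{E}\simeq \kappa_1^*\mathcal{E}$ by the projection formula and the contractibility of the fibers $\AAA{1}$ of $\pr_2$. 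Unwinding the identifications shows that this composite matches the adjunction map induced by $\sigma_0$.

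\textbf{Main obstacle.} The crucial technical step is the canonical identification $\mu^*\mathcal{E}\simeq \pr_2^*\kappa_1^*\mathcal{E}$. This is where the hypothesis on $\mathcal{E}$ enters essentially, and promoting the pointwise statement (that pullback along a universal cover trivializes the monodromy of a locally constant sheaf) to a global isomorphism of constructible complexes, compatible with the adjunction induced by $\sigma_0$, is the most subtle part of the argument.
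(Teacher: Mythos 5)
Your proposal is correct and takes essentially the same route as the paper: the paper trivializes $\widetilde{T}\cong \widetilde{T}'\times\AAA{1}$ via $m(y,z)=(e^{z/d}y,z)$ (your $\beta$, up to rescaling the $\AAA{1}$-coordinate), invokes local constancy of $\mathcal{E}$ on $\GG_m$-orbits to write $p^*\mathcal{E}\cong\varpi^*\mathcal{G}$ for some $\mathcal{G}$ on the $T_1$-factor, and concludes using contractibility of the $\AAA{1}$-fibers, exactly as you outline. The compatibility issue you flag as the main obstacle is dispatched in the paper by naturality alone: one only needs $p^*\mathcal{E}$ to lie in the essential image of $\varpi^*$, not the specific identification with $\pr_2^*\kappa_1^*\mathcal{E}$, since then $\varpi_*(\id\to h_*h^*)$ applied to $p^*\mathcal{E}$ is an isomorphism because it sits in a commutative square whose other three arrows are isomorphisms.
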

\begin{proof}
Consider the diagram 
\[
\xymatrix{
\wt{T'}\ar[dr]^k\ar@/^1.0pc/[drr]^{h}\ar@/_1.0pc/[ddr]_{l}\\
&\wt{T_1}\ar[d]^{p_1}\ar[r]^{\wt{\kappa_1}}&\wt{T}\ar[d]^p\ar[r]^a&\AAA{1}\ar[d]^{\exp}\\
& T_1\ar[r]^{\kappa_1}& T\ar[r]^g & \AAA{1}
\\
}
\]
Here $\widetilde{T}$ and $\widetilde{T_1}$ are defined in such a way that the two squares are Cartesian.
Further, $\widetilde{T}'$ is defined to be the fiber over zero of the map $a\colon \widetilde{T}\to \AAA{1}$, so the map $l\colon \widetilde{T}'\to T_1$ is an isomorphism. Since the squares are Cartesian, the morphism $k$ is uniquely determined.

We define $\alpha$ to be the composition of morphisms 
\[
p_*p^*\mathcal{E}\rightarrow p_*h_*h^*p^*\mathcal{E}\cong \kappa_{1*}l_*l^*\kappa_1^*\mathcal{E}\cong \kappa_{1*}\kappa_1^*\mathcal{E}.
\]


The map 
\begin{align*}
m\colon \widetilde{T}'\times\AAA{1}\to &\widetilde{T}\\
(y,z)\mapsto&(e^{z/d}y,z)\in T\times_{\AAA{1}}\AAA{1}
\end{align*}
is an isomorphism. In the following diagram, for which the sub-diagram of uncurved arrows is commutative, we use $m$ to identify $\widetilde{T}'\times\AAA{1}$ and $\widetilde{T}$.
Then $h$ is the inclusion of the zero fiber of the trivial $\AAA{1}$-bundle, while we define $\varpi$ to be the projection onto the $\tilde{T}'$ factor
\[
\xymatrix{
\wt{T'}\ar@/_.5pc/[r]_-{h}\ar[d]^t &\ar[l]_-{\varpi}\wt{T'}\times\AAA{1}\ar[d]^p\\
S&\ar[l]_{\pi} T.
}
\]
In the above diagram, the morphism $t$ is defined by $t=\pi \kappa_1 l$. 
Since $\varpi$ is a projection with contractible fibers, the natural transformation
\[
\varpi_*(\id\rightarrow h_*h^*)\varpi^*
\]
is an isomorphism. We need to show that the following natural map is an isomorphism, as it is isomorphic to $\pi_*\alpha$:
\[  t_*\varpi_*p^*\mathcal{E}\to t_*\varpi_*h_*h^*p^*\mathcal{E}.  \]
Since we assume that $\mathcal{E}$ is locally constant on $\GG_m$-orbits, there is a sheaf $\mathcal{G}$ on $\wt{T'}$ such that $p^*\mathcal{E}\cong\varpi^*\mathcal{G}$.
In the commutative diagram
\[
\xymatrix{
t_*\varpi_*p^*\mathcal{E}\ar[r]\ar[d]^{\cong}& t_*\varpi_*h _*h^*p^*\mathcal{E}\ar[d]^{\cong}\\
t_*\varpi_*\varpi^*\mathcal{G}\ar[r]^-{\cong}& t_*\varpi_*h_*h^*\varpi^*\mathcal{G},
}
\]
the top horizontal morphism is an isomorphism since the other three are.
\end{proof}

\begin{corollary}\label{cor2}
Using the notation introduced before Proposition \ref{vanishing_prop}, the following diagram commutes, where the horizontal arrows are isomorphisms:
\[
\begin{tikzcd}
\pi_*i_{*}\iota_{1*}\iota_1^*i^!\pi^*\mathcal{E} \arrow{d} & \pi_*i_{*}s_*s^*i^!\pi^*\mathcal{E}\arrow{d} \arrow{l}[swap]{\cong} \arrow{r}{\cong} &  \pi_*i_{*}\psi_{h}i^!\pi^*\mathcal{E}\arrow{d}\\
\pi_*\kappa_{1*}\kappa_1^*\pi^*\mathcal{E} & \pi_*p_*p^*\pi^*\mathcal{E}\arrow{r}{\cong} \arrow{l}[swap]{\cong}&  \pi_*\psi_g\pi^*\mathcal{E}.
\end{tikzcd}
\]
\end{corollary}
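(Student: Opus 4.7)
The plan is to assemble the diagram using three ingredients already at hand: Corollary \ref{cor1} for the two right-hand horizontal arrows, Proposition \ref{nby_prop} for the two left-hand horizontal arrows, and the adjunction counit $\tilde{i}_*\tilde{i}^!\to\id$ (combined with base change) for the three vertical arrows. Commutativity will then follow by naturality.

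First I would verify that the intermediate sheaves appearing in the diagram are locally constant on $\GG_m$-orbits on the complement of the zero fiber, so that the hypotheses of Corollary \ref{cor1} and Proposition \ref{nby_prop} are satisfied. Since $\mathcal{E}$ is pulled back from $X$ (on which $\GG_m$ acts trivially), $\pi^*\mathcal{E}$ is $\GG_m$-equivariant and thus locally constant on orbits. The closed immersion $i$ is $\GG_m$-equivariant (because $Z$ is cut out by the $\GG_m$-semi-invariant function $g_1$), so $i^!\pi^*\mathcal{E}$ inherits $\GG_m$-equivariance and the same local constancy; the covers $p$ and $s$, being pulled back from the exponential map, are étale away from the zero fiber, so $p_*p^*\pi^*\mathcal{E}$ and $s_*s^*i^!\pi^*\mathcal{E}$ remain locally constant on orbits there.

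With this in hand, I would obtain the two right-hand horizontal isomorphisms by applying Corollary \ref{cor1} to $\pi_*p_*p^*\pi^*\mathcal{E}$ and $\pi_*i_*s_*s^*i^!\pi^*\mathcal{E}$, then recognizing the targets as $\pi_*\psi_g\pi^*\mathcal{E}$ and $\pi_*i_*\psi_h i^!\pi^*\mathcal{E}$ via the definitions $\psi_g=\kappa_{0*}\kappa_0^*p_*p^*$ and $\psi_h=\iota_{0*}\iota_0^*s_*s^*$. The two left-hand horizontal isomorphisms come from Proposition \ref{nby_prop} applied respectively to $\pi^*\mathcal{E}$ on $\overline{X}\setminus\overline{X}_0$ (with the morphism $\pi$ and function $g$) and to $i^!\pi^*\mathcal{E}$ on $(Z\times\AAA{1})\setminus(Z\times\AAA{1})_0$ (with the morphism $\pi\circ i$ and function $h$). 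The three vertical arrows are all produced from the counit $\tilde{i}_*\tilde{i}^!\to\id$ together with the base change isomorphisms $i^!p_*\cong s_*\tilde{i}^!$ (and its fiberwise analogues for the Cartesian squares involving $\iota_z,\kappa_z$, $z\in\{0,1\}$) coming from the diagram at the end of \S \ref{vancycles}.

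Finally, I would check commutativity of each square by naturality. The right square commutes because both routes are obtained by applying the natural transformation $\id\to\kappa_{0*}\kappa_0^*$ (resp.\ $\id\to\iota_{0*}\iota_0^*$) and the counit-based vertical map, which commute by the standard interchange law for natural transformations. The left square commutes because the map constructed in the proof of Proposition \ref{nby_prop} is itself a composition of natural transformations ($\id\to h_*h^*$ followed by base change), and each of these is natural in the input sheaf and therefore compatible with the counit. The main technical obstacle will be setting up the base change identifications underlying the vertical arrows cleanly enough that naturality is manifest; once the right diagrammatic identifications are in place, the commutativity of both squares is formal.
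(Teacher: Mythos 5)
Your proposal is correct and follows essentially the same route as the paper: the left-hand horizontal isomorphisms come from Proposition \ref{nby_prop} applied to $\pi^*\mathcal{E}$ and $i^!\pi^*\mathcal{E}$ (both locally constant on $\GG_m$-orbits), the right-hand ones come from Corollary \ref{cor1} after unwinding $\psi_g=\kappa_{0*}\kappa_0^*p_*p^*$ and $\psi_h=\iota_{0*}\iota_0^*s_*s^*$, and commutativity of both squares is the formal naturality check the paper dismisses as clear. Your extra verification that the intermediate sheaves are locally constant on orbits is already contained in the proof of Corollary \ref{cor1}, so nothing essential is added or missing.
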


\begin{proof}
The left square clearly commutes. Its horizontal arrows are isomorphisms by Proposition \ref{nby_prop} because the sheaves $\pi^*\mathcal{E}$ on $X\times\AAA{n}$ and $i^!\pi^*\mathcal{E}$ on $Z\times\AAA{n}$ are constant on $\GG_m$-orbits. 

For the right square, observe that by the definition of nearby cycles we have that
\begin{align*}
\pi_*i_{*}\psi_{h}i^!\pi^*\mathcal{E}=
\pi_*i_{*}\iota_{0*}\iota_0^*s_*s^*i^!\pi^*\mathcal{E}\\
\pi_*\psi_g\pi^*\mathcal{E}=\pi_*\kappa_{0*}\kappa_0^*p_*p^*\pi^*\mathcal{E}.
\end{align*}

We can thus rewrite the right square as follows
\[
\begin{tikzcd}
\pi_*i_{*}s_*s^*i^!\pi^*\mathcal{E}\arrow{d} \arrow{r} &
\pi_*i_{*}\iota_{0*}\iota_0^*s_*s^*i^!\pi^*\mathcal{E}\arrow{d}\\
\pi_*p_*p^*\pi^*\mathcal{E}\arrow{r}&
\pi_*\kappa_{0*}\kappa_0^*p_*p^*\pi^*\mathcal{E}.
\end{tikzcd}
\]
\\
The square clearly commutes and its horizontal maps are isomorphisms by Corollary \ref{cor1}.

\end{proof}

\begin{lemma}
\label{shift_lemma}
Let $T$ be a variety with a $\GG_m$-action, and let $g\colon T\rightarrow\AAA{1}$ be a homogeneous regular function.
Denote by $\kappa_1\colon T_1\rightarrow T$ the inclusion of the fiber over $1$. Consider a sheaf $\mathcal{E}$ locally constant on $\GG_m$-orbits. Then there is a natural isomorphism $$\kappa_1^*\mathcal{E}\cong\kappa_1^!\mathcal{E}[2].$$ 
\end{lemma}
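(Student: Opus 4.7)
The plan is to exploit the $\GG_m$-action and the homogeneity of $g$ to factor $\kappa_1$ as an étale cover composed with a regular section of a smooth rank-one projection, where the desired shift follows from the triviality of the section's normal bundle. First, since $T_1 \subset T_{\neq 0} := g^{-1}(\GG_m)$ and the open inclusion $T_{\neq 0} \hookrightarrow T$ is compatible with both $\kappa_1^*$ and $\kappa_1^!$, I may replace $T$ by $T_{\neq 0}$. Writing $d$ for the (nonzero) weight of $g$, consider the action morphism
\[
a\colon T_1\times\GG_m\longrightarrow T_{\neq 0},\qquad (y,z)\longmapsto z\cdot y,
\]
which lands in $T_{\neq 0}$ because $g(z\cdot y)=z^d g(y)=z^d\in\GG_m$. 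A direct computation on fibers shows that $a$ is finite étale of degree $d$, identifying $T_{\neq 0}$ with the quotient $(T_1\times\GG_m)/\mu_d$ under the free diagonal action $\zeta\cdot(y,z)=(\zeta\cdot y,\zeta^{-1}z)$.

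Let $s\colon T_1\to T_1\times\GG_m$ be the section $s(y)=(y,1)$, so that $\kappa_1=a\circ s$. Since $a$ is étale, $a^!\cong a^*$, and the lemma reduces to showing $s^*\mathcal{F}\cong s^!\mathcal{F}[2]$ for $\mathcal{F}:=a^*\mathcal{E}$. Because $\mathcal{E}$ is locally constant on $\GG_m$-orbits, and the fibers of the smooth projection $p_1\colon T_1\times\GG_m\to T_1$ map via $a$ to the $\GG_m$-orbits in $T_{\neq 0}$, the sheaf $\mathcal{F}$ is locally constant along the fibers of $p_1$. Now $s$ is a codimension-one regular closed immersion cut out by the global equation $z-1=0$, so its normal bundle is trivial and $s^!\QQ_{T_1\times\GG_m}\cong\QQ_{T_1}[-2]$. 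The standard tensor decomposition $s^!\mathcal{F}\cong s^*\mathcal{F}\otimes s^!\QQ_{T_1\times\GG_m}$, valid precisely when $\mathcal{F}$ is smooth transverse to $s$, then yields $s^!\mathcal{F}\cong s^*\mathcal{F}[-2]$, as required.

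The main obstacle is the last step: for a general sheaf $\mathcal{F}$, the formula $s^!\mathcal{F}\cong s^*\mathcal{F}\otimes s^!\QQ$ can fail (for example, it fails for a skyscraper supported along $s$, which is exactly why the lemma is false without the orbit-wise local constancy hypothesis). The locally constant property of $a^*\mathcal{E}$ along the fibers of $p_1$, which is what the hypothesis on $\mathcal{E}$ buys us after pulling back through the étale cover $a$, is the transversal smoothness needed to make the tensor decomposition apply, and formalizing this step carefully is the real content of the proof.
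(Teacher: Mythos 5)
Your reductions are sound and in fact run parallel to the paper's proof: restricting to $T_{\neq 0}=g^{-1}(\GG_m)$ and factoring $\kappa_1$ through the action map $a\colon T_1\times\GG_m\to T_{\neq 0}$ is legitimate ($a$ is the base change of the $d$-th power map $\GG_m\to\GG_m$ along $g$, hence finite \'etale with $a^!\cong a^*$), and this is a clean algebraic substitute for the paper's use of the exponential cover $p\colon\wt{T}\to T$, a local homeomorphism with $p^!\cong p^*$. The genuine gap is the final step, which you yourself defer as ``the real content'': the tensor-decomposition isomorphism $s^!\mathcal{F}\cong s^*\mathcal{F}\otimes s^!\QQ$ is standard when $\mathcal{F}$ has locally constant cohomology in a full neighbourhood of the section $s(T_1)=T_1\times\{1\}$ (equivalently, when the section is non-characteristic for $\mathcal{F}$), whereas your hypothesis only gives local constancy along the $\GG_m$-fibre directions; in the $T_1$-directions $\mathcal{F}=a^*\mathcal{E}$ is an arbitrary constructible complex. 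So the criterion you invoke does not apply as stated, and upgrading ``fibrewise locally constant'' to the purity isomorphism is exactly what the lemma is asking for -- asserting it is not a proof.

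Moreover, your choice of cover makes the obvious repair harder than in the paper: the fibres of $p_1\colon T_1\times\GG_m\to T_1$ are not simply connected, so $\mathcal{F}$ need not be a pullback $p_1^*\mathcal{G}$, even locally over $T_1$ (it can carry genuine monodromy in the fibre direction), and one cannot simply write $s^!p_1^*\mathcal{G}\cong s^!p_1^!\mathcal{G}[-2]\cong\mathcal{G}[-2]\cong s^*p_1^*\mathcal{G}[-2]$. The paper's proof is built precisely to supply this missing ingredient: under the trivialization $\wt{T}\cong\wt{T'}\times\AAA{1}$, $(y,z)\mapsto(e^{z/d}y,z)$, the fibres of the projection $\varpi$ map onto orbits through the simply connected $\AAA{1}$, so $p^*\mathcal{E}$ really is of the form $\varpi^*\mathcal{G}$, and then $h^*\varpi^*\mathcal{G}\cong h^!\varpi^*\mathcal{G}[2]$ is immediate since $\varpi$ is smooth of relative dimension one and $h$ is a section of it. To complete your argument you would need the analogous input, e.g.\ observe that the canonical map $s^*\mathcal{F}\otimes s^!\QQ_{T_1\times\GG_m}\to s^!\mathcal{F}$ (which does exist naturally, a point in your favour) is an isomorphism by a check local near the section, restricting to $T_1\times D$ for a small disc $D\ni 1$ where the fibre monodromy disappears and the pullback description can be established -- i.e.\ a local form of the paper's trivialization argument. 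As written, that step is missing.
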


\begin{proof}
We use the commutative diagram from the proof of Proposition \ref{nby_prop}. Since $l$ is an isomorphism, the lemma follows from the claim that there is a natural isomorphism
\[
(\kappa_1 l)^*\mathcal{E}\cong (\kappa_1 l)^!\mathcal{E}[2].
\]
Since $p$ is locally a homeomorphism, we have $p^{!}\mathcal{E}\cong p^*\mathcal{E}$. Recall the maps
\[ 
\xymatrix{
\wt{T'}\ar@/_.5pc/[r]_-{h} &\ar[l]_-{\varpi}\wt{T'}\times\AAA{1}
}
\]
from the proof of Proposition \ref{nby_prop}. The sheaf $\mathcal{E}$ is locally constant on $\GG_m$-orbits, so there exists a sheaf $\mathcal{G}$ on $\wt{T'}$ such that $\mathcal{E}=\varpi^*\mathcal{G}$. The desired isomorphism follows now from 
$h^*\varpi^*\cong h^!\varpi^*[2]$.
\end{proof}


\begin{proofof}{Theorems \ref{MainThmCor} and \ref{MainThm}}
We first show that \eqref{prime3} and \eqref{DRI} are isomorphisms.  By Proposition \ref{2implies3}, it suffices to prove that \eqref{prime3} is.
For a constructible complex of sheaves $\mathcal{G}$ on $\overline{X}$, we have distinguished triangles
\begin{align*}
    i_*i^!\mathcal{G}\rightarrow\mathcal{G}\rightarrow j_*j^*\mathcal{G}\\
    \kappa_{0*}\kappa_0^*\mathcal{G}\rightarrow \psi_g\mathcal{G}\rightarrow\varphi_g\mathcal{G}.
\end{align*}
Furthermore, there are natural equivalences
\begin{align*}
    \psi_gi_*\cong i_*\psi_h\\
    \varphi_gi_*\cong i_*\varphi_h
\end{align*}
and so we obtain a commutative diagram, in which the rows and the columns are all distinguished triangles
\begin{equation}
    \label{3square}
\begin{tikzcd}
\pi_*i_*i^!\pi^*\mathcal{E}\arrow{r}\arrow{d}&\pi_* i_*\psi_hi^!\pi^*\mathcal{E}\arrow{r}\arrow{d}&\pi_* i_*\varphi_h i^!\pi^*\mathcal{E}\arrow{d}{\eqref{finalv}}\\
\pi_*\kappa_{0*}\kappa^*_0\pi^*\mathcal{E}\arrow{r}\arrow{d}&\pi_*\psi_g\pi^*\mathcal{E}\arrow{r}\arrow{d}&\pi_*\varphi_g\pi^*\mathcal{E}\arrow{d}\\
\pi_*\kappa_{0*}\kappa_0^*j_*j^*\pi^*\mathcal{E}\arrow{r}{(A)}&\pi_*\psi_gj_*j^*\pi^*\mathcal{E}\arrow{r}&\pi_*\varphi_gj_*j^*\pi^*\mathcal{E}.
\end{tikzcd}
\end{equation}
Since our goal is to show that \eqref{finalv} is an isomorphism, it is sufficient to show that $(A)$ is.  Via Corollary \ref{cor2}, the top left square in \eqref{3square} is isomorphic to the top square in
\begin{equation}
\begin{tikzcd}
\pi_*i_*i^!\pi^*\mathcal{E}\arrow{r}\arrow{d}&\pi_*i_*\iota_{1*}\iota_1^*i^!\pi^*\mathcal{E}\arrow{d}\\
\pi_*\pi^*\mathcal{E}\arrow{r}\arrow{d}&\pi_*\kappa_{1*}\kappa_1^*\pi^*\mathcal{E}\arrow{d}\\
\pi_*j_*j^*\pi^*\mathcal{E}\arrow{r}{(B)}&\pi_*\kappa_{1*}\kappa_1^*j_*j^*\pi^*\mathcal{E}
\end{tikzcd}
\end{equation}
and so it is sufficient to show that $(B)$ is an isomorphism.  Consider the diagram
\[
\begin{tikzcd}
(Z\times\AAA{1})_1 \arrow{d}{\iota_1} \arrow{r}{i_1} & (X'\times\AAA{1})_1 \arrow{d}{\kappa_1}& (U\times\AAA{1})_1 \arrow{l}[swap]{j_1} \arrow{d}{u_1}\\
Z\times\AAA{1} \arrow{r}{i} & X'\times\AAA{1} & U\times\AAA{1}. \arrow{l}[swap]{j}
\end{tikzcd}
\]
Via Lemma \ref{shift_lemma}, for $\mathcal{G}$ a complex of constructible sheaves on $X'\times\AAA{1}$ which is locally constant on $\GG_m$-orbits, the base change morphism 
\[\alpha\colon \kappa^*_{1}j_*\mathcal{G}\rightarrow j_{1*}u_1^*\mathcal{G}\]
is an isomorphism since it fits into the commutative square of isomorphisms
\[
\begin{tikzcd}
\kappa^*_{1}j_*\mathcal{G}\arrow{r}{\alpha}\arrow{d}&j_{1*}u_1^*\mathcal{G}\arrow{d}\\
j_{1*}u_1^!\mathcal{G}[-2]\arrow{r}&\kappa^!_{1}j_*\mathcal{G}[-2]
.
\end{tikzcd}
\]
So we have reduced the problem to proving that the morphism
\[
\pi_*j_*j^*\pi^*\mathcal{E}\rightarrow \pi_*j_*u_{1*}u_1^*j^*\pi^*\mathcal{E}
\]
is an isomorphism.  This holds because $\pi^*\mathcal{E}$ is constant along the fibers of $\pi$ and $u_1$ is a homotopy equivalence on each fiber of $\pi$. 

This completes the proof that the morphism \eqref{finalv} is an isomorphism, and so all that is left is to prove that \eqref{prime2} is an isomorphism.  For this, let 
\begin{align*}
    r\colon S\hookrightarrow X\\
    \ol{r} \colon\ol{S}\hookrightarrow \ol{X}
\end{align*} be the inclusions, and let $\tau\colon X\rightarrow \pt$ be the structure morphism.  Since the structure morphism for $\ol{X}$ can be written as $\tau \pi$, the morphism \eqref{prime2} can be written as the top horizontal arrow in the commutative diagram
\[
\xymatrix{
\tau_!\pi_!\ol{r}_!\ol{r}^*\phim{g}\pi^*\QQ_X\ar[r]\ar[d]^{\cong}&  \tau_!\pi_!\ol{r}_!\ol{r}^*i_*\phim{g^{\red}}i^*\pi^*\QQ_X\ar[d]^{\cong}
\\
\tau_!r_!r^*\pi_!\phim{g}\pi^*\QQ_X\ar[r]^-{\cong}&  \tau_!r_!r^*\pi_!i_*\phim{g^{\red}}i^*\pi^*\QQ_X
}
\]
in which the vertical arrows are isomorphisms by base change, and the bottom horizontal arrow is an isomorphism by the first part of the theorem.
\end{proofof}

As in the case of undeformed cohomological dimensional reduction, we can easily generalize Theorem \ref{MainThm} to stacks, i.e. the following corollary is a generalization of \cite[Cor.A.9]{DAV}.
\begin{corollary}
\label{DDRstack}
Let $G$ be an algebraic group, and let $X$ be a $G$-equivariant variety with $\overline{X}$ the total space of a $G$-equivariant bundle over $X$ with projection map $\pi\colon\overline{X}\rightarrow X$.  Let $\overline{g}\in \Gamma(\overline{X})^G$, and let $g\in\Gamma(\overline{X}/G)$ be the induced function on the stack.
Assume in addition the $\GG_m$-equivariance assumptions of Theorem \ref{MainThm}, and define $Z$ and $\overline{Z}$ as in that theorem.
Let $S\subset X$ be a $G$-invariant subvariety, then there is a natural isomorphism of cohomologically graded monodromic mixed Hodge structures
\[
\HO_c(\overline{S}/G,\phim{g}\QQ_{\overline{X}/G})\cong \HO_c\left((\overline{Z}\cap \overline{S})/G,\phim{g^{\red}}\QQ_{\overline{Z}/G}\right).
\]
\end{corollary}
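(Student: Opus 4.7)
The plan is to reduce the stacky statement to the scheme-theoretic statement of Theorem \ref{MainThm} by means of the standard Totaro-style finite-dimensional approximation for equivariant vanishing cycle cohomology recalled in \eqref{equiv} and \eqref{rel_equiv}. Suppose $G \subset \Gl_n$ is realized as a closed subgroup; for each $N \geq n$, the action of $G$ on $\Fr(n,N)$ is free, so the fiber products
\[
\overline{X}_N \colonequals \overline{X} \times_G \Fr(n,N), \qquad X_N \colonequals X \times_G \Fr(n,N)
\]
are smooth schemes, and the natural projection $\pi_N \colon \overline{X}_N \to X_N$ exhibits $\overline{X}_N$ as the total space of the (descended) $G$-equivariant bundle over $X_N$. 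Likewise $\overline{Z}_N = \overline{Z}\times_G \Fr(n,N)$ sits inside $\overline{X}_N$, and the function $\overline{g}$ descends to a function $\overline{g}_N \in \Gamma(\overline{X}_N)$ with reduction $\overline{g}_N^{\red}$ along $\overline{Z}_N$.

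Since the $\GG_m$-action on $\overline{X}$ commutes with the $G$-action (being induced from the vector bundle structure with its extra $\GG_m$-scaling), it descends to a $\GG_m$-action on each $\overline{X}_N$ with precisely the same weight conditions, and the function $\overline{g}_N$ remains $\GG_m$-semi-invariant of positive weight. Thus the hypotheses of Theorem \ref{MainThm} hold for $(\overline{X}_N \to X_N, \overline{g}_N)$ together with the $G$-invariant subvariety $S_N \colonequals S \times_G \Fr(n,N) \subset X_N$. Applying Theorem \ref{MainThm} yields natural isomorphisms of cohomologically graded monodromic mixed Hodge structures
\[
\HO_c\!\left(\overline{S}_N,\, \phim{\overline{g}_N}\QQ_{\overline{X}_N}\right)\;\cong\; \HO_c\!\left(\overline{Z}_N \cap \overline{S}_N,\, \phim{\overline{g}_N^{\red}}\QQ_{\overline{Z}_N}\right)
\]
for every $N$, where $\overline{S}_N = \overline{S} \times_G \Fr(n,N)$.

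To conclude, I would take the limit as $N \to \infty$ on both sides after tensoring with $\LLL^{-nN}$. By \eqref{equiv} applied to $\overline{S}/G$ and $(\overline{Z}\cap \overline{S})/G$, the two limits compute the left and right hand sides of the desired isomorphism respectively. The key point that makes passing to the limit legitimate is that the isomorphisms at each finite stage are induced by the single natural transformation \eqref{DRI} applied in the presence of the extra $\Fr(n,N)$ direction; naturality guarantees compatibility with the transition morphisms $\Fr(n,N) \hookrightarrow \Fr(n,N+1)$ that define the limit, so the limit of isomorphisms is an isomorphism.

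The main obstacle to watch out for is the bookkeeping needed to verify that the $\GG_m$-action on $\overline{X}$ genuinely descends compatibly to each $\overline{X}_N$ and that the decomposition of the function $\overline{g}_N$ into its ``linear in the fiber direction'' and ``pulled back'' parts (the conditions $g \in \mathscr{D}_{\mathscr{V}',\leq 1}$ in Theorem \ref{MainThm}) survives the $\Fr(n,N)$-twist; but since the fibers of $\overline{X}_N \to X_N$ agree with the fibers of $\overline{X} \to X$ and the $\GG_m$-action is fiberwise, both facts are essentially automatic.
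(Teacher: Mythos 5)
Your proposal is correct and takes essentially the same route as the paper: the paper also reduces to the finite-dimensional approximations $\overline{X}\times_G\Fr(n,N)$ via the defining formula \eqref{equiv}, applies Theorem \ref{MainThm} at each level $N$ (in its functorial form \eqref{DRI}, combined with proper base change), and concludes in the limit. The only cosmetic difference is that you quote the subvariety statement \eqref{prime2} directly at each finite level instead of redoing the base-change step, which amounts to the same argument.
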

\begin{proof}
We use the notation in \S \ref{coDTdefs} and formula (\ref{equiv}), and try to reduce clutter by fixing
\begin{align*}
    \QQ_{\ol{A}}=&\QQ_{\overline{X}\times_G \text{Fr}(n,N)}\\
    \QQ_{\ol{B}}=&\QQ_{\overline{Z}\times_G \text{Fr}(n,N)}\\
    \QQ_{A}=&\QQ_{X\times_G \text{Fr}(n,N)}.
\end{align*}

We denote by $f\in \Gamma(\ol{X}\times_G\Fr(n,N))$ the function induced by $g$, and by $f^{\red}$ the restriction to $\ol{Z}\times_G\Fr(n,N)$.  The corollary follows from the claim that the natural map
\begin{equation}
\label{alpha_map}    
\HO_c^j(\overline{S}\times_G \text{Fr}(n,N),\phim{f}\QQ_{\ol{A}}\otimes\LLL^{-nN})\rightarrow \HO_c^j\left((\overline{Z}\cap \overline{S})\times_G\text{Fr}(n,N),\phim{f^{\red}}\QQ_{\ol{B}}\otimes\LLL^{-nN}\right)
\end{equation}
is an isomorphism.  Consider the commutative diagram, where the morphisms $r$ and $\overline{r}$ are the natural inclusions
\[
\begin{tikzcd}
\overline{S}\times_G \Fr(n,N)\arrow{r}{\overline{r}}\arrow{d}& \overline{X}\times_G \Fr(n,N)\arrow{d}{\pi}&\arrow{l}[swap]{\overline{i}}\ol{Z}\times_G\Fr(n,N)\\
S\times_G\Fr(n,N)\arrow{r}{r}& X\times_G\Fr(n,N)\arrow{d}{\tau}\\
&\pt.
\end{tikzcd}
\]

Then \eqref{alpha_map} is obtained by applying $\tau_!\pi_!$ to the morphism $\overline{r}_*\overline{r}^*\phim{g_N}(\mathbb{Q}_{\ol{A}}\rightarrow \overline{i}_*\overline{i}^*\mathbb{Q}_{\ol{A}})$, and so by proper base change and the isomorphism $\pi^*\QQ_{A}\cong\QQ_{\ol{A}}$, it is sufficient to prove that applying $\tau_!r_*r^*\pi_!$ to the morphism
\[
\beta\colon \phim{g_N}(\pi^*\QQ_{A}\rightarrow \overline{i}_*\overline{i}^*\pi^*\QQ_{A})
\]
gives an isomorphism.  By Theorem \ref{MainThm}, $\pi_!\beta$ is an isomorphism, and we are done.
\end{proof}

\section{Applications}
\subsection{Vanishing cycles on preprojective stacks}
\label{ppBPS_sec}
Let $Q$ be a finite quiver.  We define $\overline{Q}$ to be the doubled quiver, i.e. $\overline{Q}$ has the same vertex set as $Q$, and we set $\ol{Q}_1=Q_1\coprod Q_1^{\mathrm{op}}$ where $Q_1^{\mathrm{op}}$ contains an arrow $a^*$ for each arrow $a\in Q_1$, with the reverse orientation.  We define $\wt{Q}$ to be the quiver with the same vertices as $Q$, and with 
\[
\wt{Q}_1\colonequals\ol{Q}_1\coprod \{\omega_i\colon\medskip i\in Q_0\}
\]
where $s(\omega_i)=t(\omega_i)=i$. 
Consider the preprojective algebra
\[
\Pi_Q\colonequals \CC\ol{Q}/\langle \sum_{a\in Q_1}[a,a^*]\rangle.
\]
For $\gamma\in\NN^{Q_0}$ we denote by $\Mst_{\gamma}(\Pi_Q)\subset \Mst_{\gamma}(\ol{Q})$ the substack of $\ol{Q}$-representations that are representations of the preprojective algebra.

We have a commutative square
\[
\xymatrix{
\Mst_{\gamma}(\wt{Q})\ar[r]^-{\pi}\ar[d]^{q}&\Mst_{\gamma}(\ol{Q})\ar[d]^-{p}\\
\Msp_{\gamma}(\wt{Q})\ar[r]^-{\varpi}&\Msp_{\gamma}(\ol{Q})
}
\]
where $q$ and $p$ are the affinization maps and $\pi$ and $\varpi$ are the forgetful maps.  The morphism $\pi$ is the projection from the total space of a vector bundle. 
The map $\varpi$ has
an $\AAA{1}$-family of sections 
\[l\colon\Msp(\overline{Q})\times\AAA{1}\rightarrow\Msp(\wt{Q})\]
given by setting the action of all of the $\omega_i$ to be multiplication by $z\in\AAA{1}$.  
We define
\[
\wt{W}\colonequals \sum_{i\in Q_0}\omega_i\sum_{a\in Q_1}[a,a^*].
\]
By \cite[Lem.4.1]{Dav16b} there are monodromic mixed Hodge modules \[\BPS_{\Pi_Q,\gamma}\in\Ob(\MMHM(\Msp_{\gamma}(\Pi_Q)))\] such that \[\BPS_{\wt{Q},\wt{W},\gamma}\cong l_*(\BPS_{\Pi_Q,\gamma}\boxtimes\QQ_{\AAA{1}})\otimes\LLL^{-1/2}.\]
We define 
\begin{equation}
    \label{undef1}
\BPSA{\Pi_Q,\gamma}\colonequals\HO_c\left(\Msp_{\gamma}(\Pi_Q),\BPS_{\Pi_Q,\gamma}\right).
\end{equation}
It follows that
\begin{align}\nonumber
    \bigoplus_{\gamma\in\NN^{Q_0}}\HO_c(\Mst_{\gamma}(\Pi_Q),\QQ_{\Mst_{\gamma}(\Pi_Q)})\otimes \LLL^{\chi_Q(\gamma,\gamma)}\cong &\bigoplus_{\gamma\in\NN^{Q_0}}\HO_c(\Mst_{\gamma}(\wt{Q}),\phim{\mathfrak{T}r(\wt{W})}\QQ_{\Mst_{\gamma}(\wt{Q})})\otimes \LLL^{\chi_{\wt{Q}}(\gamma,\gamma)/2}\\
    \cong &\Sym\left( \bigoplus_{\gamma\in\NN^{Q_0}\setminus \{0\}} \BPSA{\Pi_Q,\gamma}\otimes\HO_c(\pt/\CC^*)\otimes\LLL\right).\label{undef2}
\end{align}
where the first isomorphism is via dimensional reduction, and the second is the integrality isomorphism \cite[Thm.A]{DaMe15b}.




Let $W'\in \CC\ol{Q}/[\CC\ol{Q},\CC\ol{Q}]$ be a potential.  We consider $W'$ also as a potential for $\wt{Q}$ via the natural embedding of quivers.  We say that $\wt{W}+W'$ is \textit{quasihomogeneous} if there is a weight function $\wt{Q}_1\rightarrow \NN$ such that the weight of each cyclic word in $\wt{W}+W'$ is a strictly positive constant.

\begin{proofof}{Theorem \ref{DGT_compare}}
Fix a dimension vector $\gamma\in\NN^{Q_0}$. Define $\PV_{\gamma}\subset \XX_{\gamma}(\ol{Q})$ to be the subspace of tuples $(\rho(b))_{b\in \ol{Q}_1}$ of matrices satisfying the matrix-valued equation 
\[\sum_{a\in Q_1}[\rho(a),\rho(a^*)]=0\]
and define $\PS_{\gamma}\subset \Mst_{\gamma}(\ol{Q})$ likewise, i.e. $\PS_{\gamma}=\PV_{\gamma}/\Gl_{\gamma}\cong\Mst_{\gamma}(\Pi_Q)$.

Corollary \ref{DDRstack} gives an isomorphism 
\[
\HO_c\left(\Mst_\gamma(\wt{Q}),\phi^{\mon}_{\TTTr(\wt{W}+W')}\QQ_{\Mst_\gamma(\wt{Q})}\right)\otimes\LLL^{-\gamma\cdot\gamma}\cong\HO_c\left(\PS_{\gamma}, \phim{\TTTr(W')}\mathbb{Q}_{\PS_{\gamma}}\right).
\]
Fix a cohomological degree $m$ and a number $N\gg 0$ depending on $m$.  Let $n=\sum_{i\in Q_0} \gamma_i$.  There is a natural embedding $\Gl_{\gamma}\hookrightarrow \Gl_n$ as a Levi subgroup.  Let $\Fr(n,N)$ be the space of $n$-tuples of linearly independent vectors in $\CC^N$.  
Let 
\begin{align*}
    \mathscr{M}\colonequals  &\XX_{\gamma}(\wt{Q})\times_{\Gl_{\gamma}} \Fr(n,N)\\
    \mathscr{M}'\colonequals  &\XX_{\gamma}(\ol{Q})\times_{\Gl_{\gamma}} \Fr(n,N)\\
    \mathscr{P}\colonequals  &\PV_{\gamma}\times_{\Gl_{\gamma}} \Fr(n,N)
\end{align*} 
and let $h_N\in\Gamma(\mathscr{M}')$ be the function induced by $\Tr(W')$.  We have natural maps $\pi$, $q_N$, $p_N$, and $\varpi$ fitting into the commutative diagram
\[
\begin{tikzcd}
\mathscr{M}\arrow{d}{q_N} \arrow{r}{\pi_N}& \mathscr{M}'\arrow{d}{p_N}
\\
\mathcal{M}_n(\wt{Q})\arrow{r}{\varpi}& 
\mathcal{M}_n(\ol{Q})
\end{tikzcd}
\]
where we define 
\[
\pi_N\colon (A,B,C,(v_1,\ldots,v_n))\mapsto (B,C,(v_1\ldots,v_n)).
\]
Set 
\[
\HO^m\colonequals \HO^i_c\left(\PS_{\gamma}, \phim{\TTTr(W')}\mathbb{Q}_{\PS_\gamma}\otimes\LLL^{\chi_Q(\gamma,\gamma)}\right).
\]
Below, for $\mathcal{F}\in\Ob(\Dub(\MMHM(\Msp(\ol{Q}))))$ we denote by $\mathcal{F}_{\gamma}$ the restriction of $\mathcal{F}$ to $\Msp_{\gamma}(\ol{Q})$.  Then there are isomorphisms
\begin{align*}
\HO&\substack{(0)\\ \cong}\HO^m_c\left( \mathscr{P},\phi_{h_N}^{\mon}\QQ_{\mathscr{P}}\otimes\LLL^{\chi_Q(\gamma,\gamma)-nN} \right)\\
&\cong\HO^m_c\left( \mathscr{M}',\phi^{\mon}_{h_N}\QQ_{\mathscr{P}}\otimes\LLL^{\chi_Q(\gamma,\gamma)-nN} \right)\\
&\substack{(1)\\\cong}\HO^m_c\left( \Msp_\gamma(\ol{Q}),\phi^{\mon}_{\TTr(W')}p_{N!}\QQ_{\mathscr{P}}\otimes\LLL^{\chi_Q(\gamma,\gamma)-nN} \right)\\
&\substack{(2)\\\cong}\HO^m_c\left( \Msp_\gamma(\ol{Q}),\phi^{\mon}_{\TTr(W')}p_{N!}\pi_{N!}\phi^{\mon}_{\TTr(\wt{W})}\QQ_{\mathscr{M}}\otimes\LLL^{\chi_{\wt{Q}}(\gamma,\gamma)/2-nN} \right)\\
&\cong\HO^m_c\left( \Msp_\gamma(\ol{Q}),\phi^{\mon}_{\TTr(W')}\varpi_!q_{N!}\phi^{\mon}_{\TTr(\wt{W})}\QQ_{\mathscr{M}}\otimes\LLL^{\chi_{\wt{Q}}(\gamma,\gamma)/2-nN} \right)\\
&\substack{(3)\\\cong}\HO^m_c\left( \Msp_\gamma(\ol{Q}),\phi^{\mon}_{\TTr(W')}\varpi_!\Sym\left( \BPS_{\wt{Q},\wt{W},\gamma}\otimes \HO_c(\pt/\CC^*)_{\vir}\right)_{\gamma}\right)\\
&\cong\HO^m_c\left( \Msp_\gamma(\ol{Q}),\phi^{\mon}_{\TTr(W')}\Sym\left( \varpi_!\BPS_{\wt{Q},\wt{W},\gamma}\otimes \HO_c(\pt/\CC^*)_{\vir}\right)_{\gamma}\right)\\
&\cong\HO^m_c\left( \Msp_\gamma(\ol{Q}),\phi^{\mon}_{\TTr(W')}\Sym\left( \BPS_{\Pi_Q,\gamma}\otimes\HO_c(\pt/\CC^*)\otimes\LLL\right)_{\gamma}\right)\\
&\substack{(4)\\\cong}\HO^m_c\left( \Msp_\gamma(\overline{Q}),\Sym\left( \mathcal{G}_{\gamma}\otimes\HO_c(\pt/\CC^*)\right)_{\gamma}\right)
\end{align*}
as required.  Isomorphism (0) follows as in \cite[Sec.2.2]{DaMe15b} from the fact that, up to removing a very high codimension substack, $\mathscr{P}$ is a $nN$-dimensional affine fibration over $\PS_{\gamma}$.  Isomorphism (1) follows from the fact that $p$ is approximated by proper maps (and so commutes with vanishing cycle functors \cite[Prop.4.3]{DaMe15b}).  Isomorphism (2) follows from usual cohomological dimensional reduction \cite[Thm.A.1]{DAV}.  Isomorphism (3) is the cohomological integrality theorem, while isomorphism (4) follows from commutativity of vanishing cycle functors with $\Sym$ \cite[Prop.3.11]{DaMe15b}.  

\end{proofof}

\subsection{Generalizing the CMPS conjecture}
With more effort, one can show that there are isomorphisms
\begin{equation}
    \label{DDRBPS}
\BPSA{\wt{Q},\wt{W}+W',\gamma}\cong \HO_c(\Msp_{\gamma}(\ol{Q}),\phim{\TTr(W')}\BPS_{\Pi_Q,\gamma})\otimes\LL^{1/2}.
\end{equation}
This follows from realising the deformed dimensional reduction isomorphism as an isomorphism of cohomological Hall algebras and realising BPS invariants as primitive generators.  It follows that we can endow 
\[
\bigoplus_{\gamma\neq 0}\HO(\Msp_{\gamma}(\ol{Q}),\phim{\Tr(W')}\BPS_{\Pi_Q,\gamma})
\]
with a Lie algebra structure as in \cite{DaMe15b}.  Expanding upon all this would greatly lengthen the paper.  However we do not need to prove that there is an isomorphism \eqref{DDRBPS} to obtain the following corollaries, generalizing the CMPS conjecture.

\begin{corollary}
\label{GenCor}
Let $\alpha\in\CC\langle b,c\rangle_{\geq 2}$ be quasihomogeneous, let $\eta\in\CC[b,c]$ be the Abelianization of $\alpha$, and assume that $\HO_c(\AAA{2},\phim{\eta}\QQ_{\AAA{2}})$ is pure.  The cohomological DT invariants for $(\LQ{3},\wt{W}+\alpha)$ and dimension $n\geq 1$ are
\[
\BPSA{\LQ{3},\wt{W}+\alpha,n}\cong \HO_c(\AAA{2},\phim{\eta}\QQ_{\AAA{2}})\otimes\LLL^{-1/2}
\]
and thus only depend on the Abelianization of $\alpha$ and do not depend on $n$ at all.  In particular, the cohomological DT invariants for $(\LQ{3},W_d)$ and dimension vector $n\geq 1$ are
\[
\BPSA{\LQ{3},W_d,n}\cong \HO_c(\AAA{1},\phim{c^d}\QQ_{\AAA{1}})\otimes\LLL^{1/2}.
\]

\end{corollary}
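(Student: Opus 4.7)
The plan is to apply Theorem \ref{DGT_compare} with $Q=\LQ{1}$ (so that $\wt{Q}\cong \LQ{3}$ and $\Pi_Q\cong\CC[b,c]$) and $W'=\alpha$, and to combine this with the cohomological integrality theorem \eqref{coh_int_thm} applied to the pair $(\LQ{3},\wt{W}+\alpha)$.  These give two $\Sym$-expressions for the same cohomologically graded monodromic mixed Hodge structure, from which the identification of $\BPSA{\LQ{3},\wt{W}+\alpha,n}$ with $\mathcal{G}_n\otimes\LLL^{-1/2}$ will follow, once purity is verified on both sides.

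The first step is to make $\BPS_{\Pi_{\LQ{1}},n}$ explicit.  Since $l\circ(\Delta_n\times\id_{\AAA{1}})$ coincides with the diagonal embedding $\Delta_n\colon\AAA{3}\hookrightarrow\Msp_n(\LQ{3})$, comparing the general formula $\BPS_{\wt{Q},\wt{W},n}=l_*(\BPS_{\Pi_Q,n}\boxtimes\QQ_{\AAA{1}})\otimes\LLL^{-1/2}$ with the known expression \eqref{3loopBPS} forces $\BPS_{\Pi_{\LQ{1}},n}\cong \Delta_{n,*}\QQ_{\AAA{2}}\otimes\LLL^{-1}$.  This is supported on the small diagonal $\Delta_n(\AAA{2})\subset\Msp_n(\LQ{2})$, on which $\Tr(\alpha)$ restricts to $n\cdot\eta$ (as $\alpha(xI,yI)=\eta(x,y)\cdot I$).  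Since $\phim{n\eta}$ is canonically isomorphic to $\phim{\eta}$ via the $\AAA{1}$-automorphism $z\mapsto nz$, one obtains
\[
\mathcal{G}_n=\HO_c\bigl(\Msp_n(\LQ{2}),\phim{\Tr(\alpha)}\BPS_{\Pi_{\LQ{1}},n}\bigr)\otimes\LLL \cong \HO_c(\AAA{2},\phim{\eta}\QQ_{\AAA{2}}),
\]
independently of $n$.

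The hardest step will be extracting the primitive generators from the two $\Sym$-expressions.  Combining Theorem \ref{DGT_compare} with \eqref{coh_int_thm} gives an isomorphism
\[
\Sym\Bigl(\bigoplus_{n\geq 1}\HO_c(\AAA{2},\phim{\eta}\QQ)\otimes\HO_c(\pt/\CC^*)\,T^n\Bigr)\cong\Sym\Bigl(\bigoplus_{n\geq 1}\BPSA{\LQ{3},\wt{W}+\alpha,n}\otimes\HO_c(\pt/\CC^*)\otimes\LLL^{1/2}\,T^n\Bigr),
\]
where $T$ tracks the dimension-vector grading and we have used $\HO_c(\pt/\CC^*)_{\vir}=\HO_c(\pt/\CC^*)\otimes\LLL^{1/2}$.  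Applying $\LOG$ in the Grothendieck group $\KK_0(\MMHS)\llb T\rrb$ and solving recursively in $n$ yields an equality of classes.  Quasihomogeneity of $\wt{W}+\alpha$ implies, by the argument of Lemma \ref{purlemm} via \cite[Thm.3.1]{DMSS13}, that $\BPSA{\LQ{3},\wt{W}+\alpha,n}$ is pure; combined with the purity of $\HO_c(\AAA{2},\phim{\eta}\QQ)$ assumed in the statement, the $\KK_0$-identity upgrades to the desired isomorphism $\BPSA{\LQ{3},\wt{W}+\alpha,n}\cong\HO_c(\AAA{2},\phim{\eta}\QQ)\otimes\LLL^{-1/2}$.

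For the specialization to $(\LQ{3},W_d)$, take $\alpha=c^d$, so that $\eta=c^d$; the Thom--Sebastiani isomorphism applied to the decomposition $\AAA{2}=\AAA{1}_b\times\AAA{1}_c$ gives
\[
\HO_c(\AAA{2},\phim{c^d}\QQ_{\AAA{2}})\cong \HO_c(\AAA{1},\QQ)\otimes\HO_c(\AAA{1},\phim{c^d}\QQ_{\AAA{1}})\cong \LLL\otimes\HO_c(\AAA{1},\phim{c^d}\QQ),
\]
which is manifestly pure (making the purity hypothesis automatic in this case), and substituting yields $\BPSA{\LQ{3},W_d,n}\cong \HO_c(\AAA{1},\phim{c^d}\QQ)\otimes\LLL^{1/2}$.
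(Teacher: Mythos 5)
Your overall route is the same as the paper's: identify $\BPS_{\Pi_{\LQ{1}},n}\cong\Delta_{n,*}\QQ_{\AAA{2}}\otimes\LLL^{-1}$, compute $\mathcal{G}_n\cong\HO_c(\AAA{2},\phim{\eta}\QQ_{\AAA{2}})$ (the factor $n$ in $n\eta$ being harmless), feed Theorem \ref{DGT_compare} and the cohomological integrality isomorphism \eqref{coh_int_thm} into a comparison of two $\Sym$-expressions, and use purity plus semisimplicity to match the primitive pieces; the final Thom--Sebastiani specialization to $\alpha=c^d$ is also as in the paper. Your twist bookkeeping ($\BPSA{\LQ{3},\wt{W}+\alpha,n}\otimes\LLL^{1/2}\cong\mathcal{G}_n$) is correct.

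The one step that does not hold up as written is your purity claim for the BPS side: you assert that quasihomogeneity of $\wt{W}+\alpha$ gives purity of $\BPSA{\LQ{3},\wt{W}+\alpha,n}$ ``by the argument of Lemma \ref{purlemm} via [DMSS13, Thm.~3.1]''. That theorem requires the support of the vanishing-cycle complex to be proper; in Lemma \ref{purlemm} this properness is exactly what Lemmas \ref{prefac} and \ref{supplemm} supply, and their proofs use the specific relations of $W_d=[a,b]c+c^d$ (nilpotency of $\rho(c)$, point support of $\phim{\TTr(W_d)_n}\ICS_{\Msp^0_n}(\QQ)$). For a general quasihomogeneous $\alpha$ this support analysis is unavailable and can genuinely fail: e.g.\ for $\alpha=bc^2$ the critical locus of $\TTr(\wt{W}+\alpha)$ contains the non-proper family $(0,\lambda\Id,0)$, so the support of the relevant vanishing-cycle complex on $\Msp_n$ is not proper and [DMSS13] does not apply. (This is also why the corollary has to \emph{assume} purity of $\HO_c(\AAA{2},\phim{\eta}\QQ_{\AAA{2}})$ rather than deduce it from quasihomogeneity.) The fix is cheap and is what the paper does: you do not need an independent purity proof for the BPS side. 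Since the $\mathcal{G}$-side of your $\Sym\cong\Sym$ identity is pure by the hypothesis on $\HO_c(\AAA{2},\phim{\eta}\QQ_{\AAA{2}})$, the common cohomologically graded monodromic mixed Hodge structure (this is the left-hand side of \eqref{pr1}) is pure, hence lives in the semisimple category of pure objects; each $\BPSA{\LQ{3},\wt{W}+\alpha,n}\otimes\HO_c(\pt/\CC^*)_{\vir}$ occurs as a direct summand of it, so it is pure as well, and then your $\KK_0$/$\LOG$ comparison upgrades to the stated isomorphism. With that replacement your argument is complete and coincides with the paper's proof.
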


\begin{proof}
We define
\begin{align*}
\Delta_n\colon &\AAA{2}\rightarrow\Msp_n(\ol{Q})\\
&(y,z)\mapsto (y\cdot \Id_{n\times n},z\cdot \Id_{n\times n})
\end{align*}
Then by \cite[Sec.5]{Dav16b}, for all $n\geq 1$ we can write $\BPS_{\Pi_Q,n}=\Delta_{n*}\QQ_{\AAA{2}}\otimes\LLL^{-1}$.  In the notation of Theorem \ref{DGT_compare}, we have
\[
\mathcal{G}_n=\HO(\AAA{2},\phim{\eta}\QQ_{\AAA{2}})\otimes\LLL
\]
for all $n\geq 1$, and we have an isomorphism
\begin{align}
\label{pr1}
\bigoplus_{n \in\NN}\HO_c(\StComm_n,\phim{\TTTr(\alpha)}\QQ_{\StComm_n})\cong \Sym\left( \bigoplus_{\gamma\in\NN\setminus \{0\}} \HO(\AAA{2},\phim{\eta}\QQ_{\AAA{2}})\otimes\LLL^{1/2}\otimes\HO_c(\pt/\CC^*)_{\vir}\right).
\end{align}
In particular, the left hand side of \eqref{pr1} is an element of the semisimple category of \textit{pure} monodromic mixed Hodge structures, so that the isomorphism $\mathrm{LHS}\cong\Sym(\mathcal{T})$ determines $\mathcal{T}$ up to isomorphism.  Corollary \ref{DDRstack} and \eqref{coh_int_thm} give us isomorphisms
\begin{align}\nonumber
\bigoplus_{n \in\NN}\HO_c(\StComm_n,\phim{\TTTr(\alpha)}\QQ_{\StComm_n})\cong
&\bigoplus_{n\in\NN}\HO_c(\Mst_n(\wt{Q}),\phim{\TTTr(\wt{W}+\alpha)}\QQ_{\Mst_{n}(\wt{Q})})\otimes\LL^{-n\cdot n}\\
\cong&\Sym\left(\bigoplus_{n\geq 1} \BPSA{\LQ{3},\wt{W}+\alpha,n}\otimes\HO_c(\pt/\CC^*)_{\vir}\right).
\end{align}
and the result follows from comparing with \eqref{pr1}.
\end{proof}

We deduce, as a special case of Corollary \ref{GenCor}, a cohomological refinement of \cite[Thm.3.2]{CMPS}:

\begin{corollary}
Set $(Q,W)=(\LQ{3},a[b,c]-cb^2)$.  Then
\[
\BPSA{Q,W,n}\cong \LLL^{1/2}
\]
for all $n\geq 1$.
\end{corollary}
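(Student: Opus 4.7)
The plan is to apply Corollary \ref{GenCor} with $\alpha = -cb^2$. To set this up, I observe that $(Q, W) = (\LQ{3}, \wt{W} + \alpha)$, where $\LQ{3}$ is identified with the tripled quiver of the Jordan quiver $\LQ{1}$ and $\wt{W} = a[b,c]$ is the canonical cubic potential. Assigning each of the three loops $a, b, c$ weight $1$ makes $\wt{W} + \alpha$ homogeneous of degree $3$, so the quasihomogeneity hypothesis of Corollary \ref{GenCor} is satisfied, as is the positivity of $\GG_m$-weights required to run the earlier results.

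Next, I identify the Abelianization $\eta \in \CC[y, z]$ of $\alpha$: it is $\eta = -y^2 z$, with the convention $b \leftrightarrow y$, $c \leftrightarrow z$. To compute $\HO_c(\AAA{2}, \phim{\eta}\QQ_{\AAA{2}})$ I invoke the classical cohomological dimensional reduction isomorphism \eqref{ADR}. Writing $\eta = g_1 \cdot z$ with $g_1 = -y^2$ pulled back from the $y$-factor $X = \AAA{1}$ and taking the $z$-factor as the $\AAA{m=1}$ direction, the reduced vanishing locus of $g_1$ in $X$ is the origin, so $\overline{Z}$ is a copy of $\AAA{1}$. The theorem then yields
\[
\HO_c(\AAA{2}, \phim{\eta}\QQ_{\AAA{2}}) \cong \HO_c(\AAA{1}, \QQ) \cong \LLL
\]
with trivial monodromy; in particular this monodromic mixed Hodge structure is pure, so the purity hypothesis of Corollary \ref{GenCor} is also satisfied.

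Applying Corollary \ref{GenCor} now yields
\[
\BPSA{Q, W, n} \cong \HO_c(\AAA{2}, \phim{\eta}\QQ_{\AAA{2}}) \otimes \LLL^{-1/2} \cong \LLL \otimes \LLL^{-1/2} \cong \LLL^{1/2}
\]
for every $n \geq 1$, as desired. There is no real obstacle: the entire argument reduces to checking the two hypotheses of Corollary \ref{GenCor}, and both are one-line verifications, with the purity coming for free from undeformed dimensional reduction applied to the Abelianized potential $\eta$.
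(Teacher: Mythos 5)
Your proof is correct and takes essentially the same route as the paper: apply Corollary \ref{GenCor} and identify $\HO_c(\AAA{2},\phim{\eta}\QQ_{\AAA{2}})\cong\HO_c(\AAA{1},\QQ)\cong\LLL$ via the ordinary dimensional reduction isomorphism with $g_1=b^2$ (your explicit verifications of quasihomogeneity and of the purity hypothesis are exactly the checks the paper leaves implicit), after which $\LLL\otimes\LLL^{-1/2}\cong\LLL^{1/2}$ gives the claim.
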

\begin{proof}
By Corollary \ref{GenCor}, this reduces to showing that
\[
\HO_c(\AAA{2},\phim{cb^2}\QQ_{\AAA{2}})\cong\HO_c(\AAA{1},\QQ),
\]
which follows from the usual dimensional reduction isomorphism, defining $g_1=b^2$ in \eqref{set1}.
\end{proof}

\bibliographystyle{plain}


\textsc{\small B. Davison: School of Mathematics, University of Edinburgh, James Clerk Maxwell Building, The King's Buildings, Peter Guthrie Tait Road, EH9 3FD}\\
\textit{\small E-mail address:} \texttt{\small ben.davison@ed.ac.uk}\\

\textsc{\small T. P\u adurariu: Department of Mathematics, Massachusetts Institute of Techonology, 
182 Memorial Drive, Cambridge, MA 02139}\\
\textit{\small E-mail address:} \texttt{\small tpad@mit.edu}\\

\end{document}